\crefname{algorithm}{algorithm}{algorithms}
\Crefname{algorithm}{Algorithm}{Algorithms}
\setlist[enumerate]{leftmargin=.5in}
\setlist[itemize]{leftmargin=.5in}
\DeclareRobustCommand{\lowerrighttriangle}{%
  \begingroup
  \setlength{\unitlength}{1ex}%
  \begin{picture}(1,1)
  \polyline(1,0)(0,0)(0,1)(1,0)(.5,0)
  \end{picture}%
  \endgroup
}
\newcommand{\TT}{\mathcal{T}_{\:\lowerrighttriangle}}
\newcommand{\mcl}{\mathcal}
\newcommand{\wt}[1]{\widetilde{#1}}
\newcommand{\mbb}{\mathbb}
\newcommand{\dd}{{\rm d}}
\newcommand{\eps}{\epsilon}
\newcommand{\vrho}{\varrho}
\newcommand{\X}{\mcl{X}}
\newcommand{\Y}{\mcl{Y}}
\newcommand{\Z}{\mcl{Z}}
\newcommand{\B}{\mcl{B}}
\newcommand{\U}{\mcl{U}}
\newcommand{\V}{{\mcl{V}}}
\newcommand{\W}{{\mcl{W}}}
\newcommand{\Lip}{{\rm Lip}}
\newcommand{\Id}{{\rm Id}}
\newcommand{\PP}{\mbb{P}}
\newcommand{\R}{\mbb{R}}
\newcommand{\Law}{{\rm Law}\xspace}
\definecolor{darkred}{rgb}{.7,0,0}
\definecolor{darkgreen}{rgb}{.15,.55,0}
\definecolor{darkblue}{rgb}{0,0,0.7}
\theoremstyle{plain}
\DeclareMathOperator*{\argmin}{arg\,min}
\DeclareMathOperator*{\st}{s.t.}
\title{Conditional Optimal Transport on Function Spaces}
\author{ Bamdad Hosseini\thanks{University of Washington, Seattle, WA 98195, USA
  (\email{bamdadh@uw.edu}, \email{owlx@uw.edu}, \email{amirtag@uw.edu})} 
  \and Alexander W. Hsu\footnotemark[1] \and Amirhossein Taghvaei \footnotemark[1] 
  }
\begin{document}
\newcommand{\RefM}{\eta}
\newcommand{\TargetM}{\nu}

\maketitle

\begin{abstract}
 We present a systematic study of conditional triangular transport maps 
 in function spaces
 from the perspective of  
 optimal transportation and with a view towards amortized Bayesian inference. 
 More specifically, we develop a theory of constrained optimal transport problems
 that describe block-triangular Monge maps that characterize 
conditional measures along with their Kantorovich relaxations. This generalizes 
the theory of optimal triangular transport to separable infinite-dimensional 
function spaces with general cost functions. We further tailor our results 
to the case of Bayesian inference problems and obtain regularity estimates on 
the conditioning maps from the prior to the posterior. Finally, we present numerical 
experiments that demonstrate the computational applicability of our theoretical 
results for amortized and likelihood-free inference of functional parameters.  
\end{abstract}

\begin{keywords} Optimal transport, Simulation based inference, Likelihood-free inference, 
Amortized inference, inverse problems
\end{keywords}

\begin{AMS}
49Q22, 
62G86, 
62F15, 
60B05, 
\end{AMS}

\section{Introduction and setup}\label{sec:introduction}

\sloppy

This article presents a systematic study of conditional optimal transport (OT) on separable 
infinite-dimensional function spaces towards likelihood-free and amortized 
Bayesian inference. Our analysis gives an OT characterization 
of (block) triangular maps leading to algorithms for the computation of 
such maps for  PDE inverse problems. 

Sampling/simulation of infinite or high-dimensional probability measures is a fundamental task in  data science, computational statistics, and uncertainty quantification (UQ), with
Markov chain Monte Carlo (MCMC) 
\cite{robert1999monte, stuart-acta-numerica, stuart-mcmc, hairer2014spectral, cui2016dimension, beskos-geometric-mcmc, betancourt2017geometric, durmus2017nonasymptotic, hosseini2019two, garbuno2020interacting, hosseini2023spectral} and variational inference (VI) \cite{blei2017variational, zhang2018advances, fox2012tutorial} being the 
most broadly used approaches for this task.
In recent years a lot of interest has been generated around 
the family of transport-based sampling algorithms \cite{marzouk2016sampling}
due to the rise of generative models \cite{ng2002discriminative, jebara2012machine} such as generative adversarial networks (GANs) \cite{goodfellow2016nips,goodfellow2016deep, goodfellow2020generative},
normalizing flows (NFs) \cite{rezende2015variational, kobyzev2020normalizing, papamakarios2019normalizing}, and most recently, diffusion models (DMs)
\cite{sohl2015deep, song2020improved, kingma2021variational, cao2022survey, albergo2023stochastic}.  
Broadly speaking, these transport-based algorithms proceed as follows: 
given Hilbert spaces $\U, \V$, a {\it target} Borel probability 
measure $\TargetM \in \PP(\U)$, and a {\it reference} Borel probability measure 
$\RefM \in \PP(\V)$, a (parametric and possibly stochastic) transport map $T^\star$ is computed
such that $T^\star\# \RefM \approx \nu$. The reference $\RefM$ is 
often chosen to be trivial to simulate, such as a Gaussian measure, so that 
 $\TargetM$-samples can be (approximately) generated
 by drawing $v\sim \RefM$ and then evaluating $T^\star(v)$. The map $T^\star$ is often 
 computed by minimizing  a statistical divergence 
 between the pushforward $T^\star\#\RefM$ and the target $\TargetM$ 
 or their empirical approximations. 
 To this end, such transport-based algorithms can be categorized under VI  as minimum divergence estimators \cite{pardo2018statistical}.

 Naturally, this line of thinking leads one to the idea of characterizing 
 $T^\star$ as an OT/Monge map pushing $\eta$ to $\nu$ \cite{villani-OT}, connecting the fields of generative modeling and sampling 
 to computational OT \cite{peyre2019computational}. Exploring this connection has become an 
 area of intense interdisciplinary research 
 with ideas from OT being used in generative modelling 
 \cite{genevay2016stochastic, arjovsky2017wasserstein, gulrajani2017improved, makkuva2020optimal, onken2021ot, rout2021generative, uscidda2023monge} and, conversely, Machine Learning (ML) technology inspiring  algorithms for OT
 \cite{daniels2021score, korotin2021neural, amos2022amortizing, coeurdoux2022learning, korotin2022kernel, korotin2023neural}. 
 We should note that the overwhelming majority of contributions in the aforementioned areas are focused on the finite-dimensional setting  
 with the exception of  \cite{rahman2022generative, kerrigan2023diffusion, baptista-function-space-score, MGAN} that 
  focus specifically on infinite-dimensional  spaces.

Our goal in this paper is to study a particular type of transport that 
is  aimed at the characterization of conditional measures. We pose the following problem:

\begin{problem}[Conditional transport]\label{prob:conditional-transport}
Consider separable Hilbert spaces $\Y, \U, \V$ 
and Borel probability measures 
$\TargetM \in \PP(\Y \times \U)$ and 
$\RefM_\V \in \PP(\V)$. Let $\nu(\cdot \mid y)$ denote conditional of $\nu$ on the $y$
variable. 
Find a transport map $T^\star_\U: \Y \times \V \to \U$ such that 
$T^\star_\U(y, \cdot)\# \eta_\V = \nu( \cdot \mid y)$. 
\end{problem}
Here the measure $\eta_\V$ is a reference measure on $\V$ that is assumed to be 
easy to simulate. Assuming that we could solve the above problem then we obtain 
a simple recipe for the simulation of samples not from $\TargetM$ but any of its conditionals 
$\TargetM( \cdot \mid y)$ for a.e. choice of $y$ since we simply have to draw $v \sim \eta_\V$ and evaluate  $T^\star_\U(y, v) \sim \nu( \cdot \mid y)$. Such a strategy falls under the category of 
{\it amortized inference} \cite{siahkoohi2023reliable, zhang2018advances}, where 
an upfront cost is paid for the calculation of $T^\star_\U$ which can later generate 
conditional samples from $\nu(\cdot \mid y)$ for any choice of $y$. Such an strategy is 
inherently different from more traditional approaches such as MCMC which directly target 
$\nu(\cdot \mid y)$ for a single choice of $y$.

Our motivation for considering \Cref{prob:conditional-transport} is the solution of 
Bayesian inverse problems (BIPs) involving PDEs. 
 For example, consider the Darcy flow PDE that arises
 in various applications such as the
  subsurface flows \cite{iglesias-subsurface}  
 or electrical impedance tomography \cite{somersalo}:
 \begin{equation}\label{Darcy-PDE}
     - {\rm div} \: \exp( u^\dagger )  \: \nabla p 
     = f \quad {\rm in} \quad \Omega \qquad {\rm and} \qquad
      p = 0 \quad {\rm on} \quad \partial \Omega.
 \end{equation}
 Here $\Omega \subset \R^d$ is a smooth domain with boundary $\partial \Omega$, 
  $u^\dagger$ is a smooth field that models the permeability of the rock,
  $p$ is the pressure field, and $f$ is the source term. Suppose we have
  limited and noisy measurements $y = ( p(x_1), \dots, p(x_m)) + \eps \in \R^m$ of the 
  pressure field $p$ at a set of 
  points $\{ x_1, \dots, x_m \} \subset \Omega $
  and with Gaussian noise $\eps \sim N( 0, \sigma^2 I)$. Then our goal is to 
  estimate the permeability field $u^\dagger$ given the data $y$. 
  Employing the Bayesian perspective, we choose a prior measure $\mu \in \PP(\U)$, 
  e.g. a Gaussian random field, 
  and model $u^\dagger$ with the posterior measure $\nu( \cdot \mid y) \in \PP(\U)$ 
  defined via Bayes' rule \cite{stuart-acta-numerica}: 
  \begin{equation}\label{Bayes-rule-generic}
      \frac{\dd \nu( \cdot \mid y)}{\dd \mu}(u) 
      = \frac{1}{Z(y)} \exp \left( - \Phi(u ; y) \right),
      \quad Z(y) := \int_\U \exp( - \Phi(u; y) \dd \mu(u),
  \end{equation}
  with $\Phi$ denoting the negative log-likelihood of $y$ given $u$; 
   see \eqref{darcy-likelihood} for its expression. 
  The standard approach for sampling $\nu(\cdot \mid y)$
  is to use function space MCMC algorithms such as preconditioned-Crank-Nicolson (pCN) 
  or Metropolis Adjusted Langevine (MALA) \cite{stuart-mcmc} but 
  these algorithms require long sequential runs for each value of $y$. 
  However, by solving \Cref{prob:conditional-transport} we can obtain an
  approximation $\widehat{T}_\U$ to the map $T^\star_\U$ such that $\widehat{T}_\U (y, \cdot) \# \mu \approx \nu(\cdot \mid y)$. We use this 
  approach in \Cref{subsec:darcy} and compare 
  the performance of a triangular map appraoch 
  to that of MCMC; see in particular \Cref{fig:darcy-summary-stats}. The advantage of this 
  approach is that the map is only approximated once 
  and can simply be evaluated for any value of $y$ to generate posterior samples, 
  for comparison, an MCMC chain needs to be run to convergence for every new value of $y$.

In the rest of this section we introduce our setup and a summary of our contributions 
in a simplified setting in \Cref{subsec:main-contributions} followed by a 
review of relevant literature in \Cref{subsec:rev:lit} and an outline of the 
article in \Cref{subsec:outline}.
  

\subsection{Setup and summary of contributions}\label{subsec:main-contributions}

Our primary focus is the systematic study of \Cref{prob:conditional-transport}
in the context of OT. In particular, we will consider maps $T_\U$ that are obtained 
as a sub-component of a triangular transport map by solving an appropriately 
constrained OT problem. Following \cite[Sec. 10.10(vii)]{bogachev2} and \cite{MGAN} we define the class 
of triangular maps:

\begin{definition}\label{def:triangular-map}
Consider  Hilbert spaces $\V, \Z, \U, \Y$.
A map $T: \Z \times \V \to \Y \times \U$
is (block) {\it triangular} if there exist maps $T_\Y: \Z \to \Y$ (the $\Y$-component)
and $T_\U: \Y \times \V \to \U$ (the $\U$-component) so that 
 $T(z,v) = \left( T_\Y(z), T_\U( T_\Y(z), v) )  \right)$ for all $ (z, v) \in \Z \times \V$.
  Henceforth we write $\TT( \Z, \V; \Y, \U)$ to denote the space of such maps.
\end{definition}

The significance of the triangular maps lies in the implication of
our  \Cref{prop:triangular-transport-from-MGAN}: 
Consider a  reference measure $\RefM \in \PP(\Y \times \V)$ of the form 
$\RefM = \RefM_\Z \otimes \RefM_\V$, where $\RefM_\Z \in \PP(\Z)$ and $\RefM_\V \in \PP(\V)$ are arbitrary, 
i.e., $\RefM$ is the independent coupling of $\RefM_\Z$ and $\RefM_\V$.
Then  if $T \in \TT(\Z, \V; \Y, \U)$ 
satisfied $T \# \eta = \nu$, its $\U$-component  $T_\U$ solves \Cref{prob:conditional-transport}. This observation leads to 
numerical algorithms that are capable of generating conditional samples 
for arbitrary choices of the conditioning variable $y$ as depicted in \Cref{fig:pinwheel-intro} where used our \Cref{alg:plugin}.

\begin{figure}[htp]
    \centering
    \begin{overpic}[width=.9 \textwidth, clip = true, trim = 3cm 0cm 3cm 0cm]{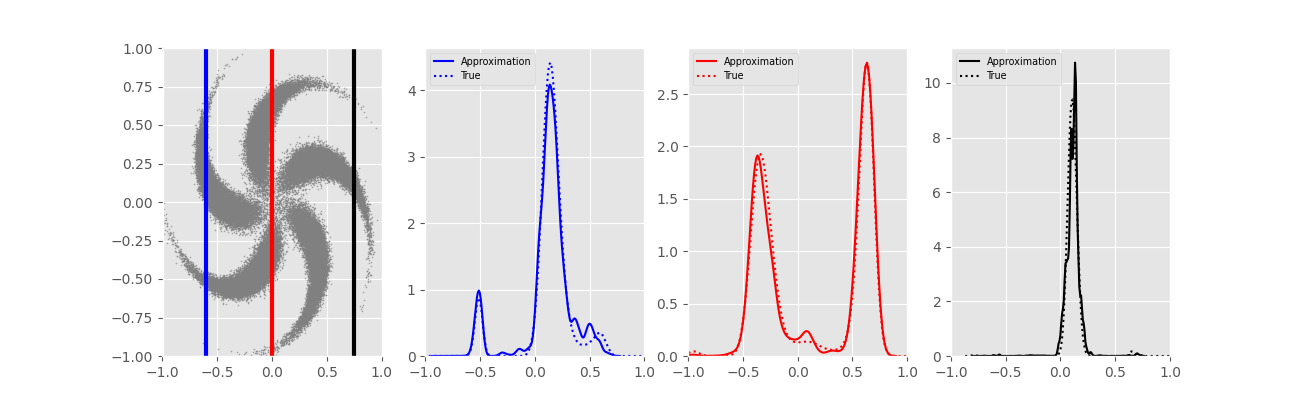}
        \put(-2, 18){\footnotesize $u$}
        \put(14, 0){\footnotesize $y$}
        \put(38.5, 0){\footnotesize $u$}
        \put(63.5, 0){\footnotesize $u$}
        \put(88, 0){\footnotesize $u$}
    \end{overpic}
    \caption{A depiction of conditional sampling using triangular maps obtained 
    from \Cref{alg:plugin}. The left panel shows a scatter plot of 
    samples from $\TargetM$ that were used to solve the conditional OT problem 
    while the blue, red, and black lines denote slices along which conditional 
    samples are generated. The other panels show the conditional histograms 
    with dotted lines showing the true histograms and solid lines showing 
    the numerical approximations.    }
    \label{fig:pinwheel-intro}
\end{figure}

Since in most applications we have freedom in the choice of the reference $\eta$ 
we will present most of our results in the particular setting where $\Z = \Y$ and 
$\eta = \nu_\Y \otimes \eta_\V$ where $\nu_\Y$ denotes the $\Y$-marginal of $\nu$
and $\eta_\V$ is an arbitrary measure. This setup is natural for inverse problems since 
it is often easy for us to simulate $\nu_\Y$. For example, consider the setting of 
the Darcyflow 
inverse problem described earlier. Then we can generate samples from $\nu$ by the following procedure:  
(1) draw $u^j \sim \mu$, (2) solve the PDE \eqref{Darcy-PDE}  using a numerical solver to obtain the solution 
$p(u^j)$, and then (3) simulate the measurements $y^j = (p(u^j)(x_1), \dots, p(u^j)(x_m)) + \epsilon^j$. This procedure 
generates samples $(y^j, u^j) \sim \nu$ where 
$y^j \sim \nu_\Y$.

To this end,  we 
define the following OT problem which we refer to as the 
{\it conditional Monge problem} in the parlance of \cite{bunne2022supervised}:
\begin{problem}[Conditional Monge]\label{prob:conditional-monge}
Consider $\eta = \nu_\Y \otimes \eta_\V$ and 
a cost function $c: (\Y \times \V) \times (\Y \times \U) \mapsto \R$.  
    Find   $T^\star$ that solves 
    \begin{equation}\label{conditional-Monge}
        \left\{ 
\begin{aligned}
         & \inf_{T} M(T), &&M(T):= 
        \int_{\Y \times \V} c \big(z, v; T(z, v) \big) \dd \RefM( z, v), \\
        & \text{subject to (s.t.)} && 
        T(z, v) = ( z, T_\U(z, v) ) \quad {\rm and} \quad  T \# \eta = \nu. 
\end{aligned}
        \right.
    \end{equation}
\end{problem}
Note that here we are taking $T_\Y = \Id$ since we chose our reference $\eta$ to have the same $\Y$-marginal 
as $\nu$ and so we do not need to transport any mass in that direction.
Since the maps in this problem are clearly triangular then 
naturally the resulting $\U$-component of $T^\star$ will solve 
\Cref{prob:conditional-transport}, i.e., $T^\star_\U( y, \cdot) \# \eta_\V = \nu( \cdot \mid y)$. 

Letting $\Pi( \RefM ; \TargetM)$ denote the set of couplings between $\RefM$ and $\TargetM$
we then define the  {\it conditional Kantorovich problem} that serves as the 
relaxation of  the conditional Monge problem above:


\begin{problem}[Conditional Kantorovich]\label{prob:conditional-kantorovich}
Consider $\eta = \nu_\Y \otimes \eta_\V$ and 
a cost function $c: (\Y \times \V) \times (\Y \times \U) \mapsto \R$. 
    Find a coupling $\pi^\star$ that solves
    \begin{equation}\label{conditional-Kantorovich}
    \left\{
        \begin{aligned}
        & \inf_\pi K(\pi) && K(\pi):= \int_{\Y \times \V \times \Y \times \U} c(z, v; y, u)  \dd \pi(  z,  v;  y,  u)\\ 
        & \st && \pi \in \Pi_\Y( \eta, \nu) :=  \{  \gamma \in \Pi( \RefM ; \TargetM) \mid 
        (z, v; y, u) \sim \gamma, \quad \text{such that} \quad z = y \}.
        \end{aligned}
    \right.
    \end{equation}
\end{problem}
Simply put, this Kantorovich problem is solved over a constrained set of couplings where 
the $z, y$ variables are deterministically coupled by the identity map.

With this setup, our first result establishes the solvability of 
the conditional Kantorovich problem under mild assumptions and characterizes 
its optimal solution in terms of optimal couplings between $\eta_\V$ and 
the conditionals $\nu(\cdot \mid y)$. Indeed, our result states that 
 \Cref{prob:conditional-kantorovich} has an optimal solution that can be obtained 
 by coupling $z, y$ using the identity map and then coupling $\eta_\V$ 
 and $\nu(\cdot \mid y)$ optimally for a.e. $y$.
\begin{theorem}\label{main-thm:conditional-Kantorovich-solvability}
Consider the setting of \Cref{prob:conditional-kantorovich}, suppose that the cost 
$c$ is continuous, $\inf c > - \infty$, and that the infimum in \eqref{conditional-Kantorovich}
is finite
Then it holds that
\begin{enumerate}[label=(\roman*)]
    \item Problem \eqref{conditional-Kantorovich} admits a minimizer 
    $\pi^\star \in \PP( \Y \times \V \times \Y \times \U)$. 
    \item It holds that
    $\pi^\star(z, v, u \mid y) = \pi^\star (v, u \mid y) \delta( y- z)$
     where for $\nu_\Y$-a.e. $y \in \Y$  the conditional measures 
     $\pi^\star (v, u \mid y)$ are optimal for 
     \begin{equation*}
         \inf_{\pi \in \Pi( \eta_\V, \nu( \cdot \mid y))} 
         \int_{\V \times \U} c(y, v; y, u) \dd \pi(  v, u).
     \end{equation*}
\end{enumerate}
\end{theorem}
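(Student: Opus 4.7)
The plan is to prove (i) via the direct method of the calculus of variations and (ii) via disintegration combined with a measurable-selection argument.

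For (i), I would first check that the feasible set $\Pi_\Y(\eta,\nu)$ is nonempty: the candidate coupling $\dd\pi(z,v,y,u) = \delta(y-z)\, \dd\eta_\V(v)\, \dd\nu(u\mid y)\, \dd\nu_\Y(y)$ lies in it, so the finite-infimum hypothesis is genuinely informative. Next, every element of $\Pi_\Y(\eta,\nu)$ has the fixed marginals $\eta$ and $\nu$, each tight as a single Borel probability measure on a Polish space, so the whole family is tight and Prokhorov yields sequential weak precompactness. The pinning constraint $\{z=y\}$ is a closed subset of the product Hilbert space, so by the Portmanteau theorem $\Pi_\Y(\eta,\nu)$ is weakly closed inside $\Pi(\eta,\nu)$. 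Combined with lower semicontinuity of $K$ (standard when $c$ is continuous and bounded below), any minimizing sequence admits a weak cluster point that attains the infimum.

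For (ii), I would apply the disintegration theorem to $\pi^\star$ along its $y$-marginal. Membership in $\Pi_\Y(\eta,\nu)$ forces the $y$-marginal to be $\nu_\Y$ and the $(z,y)$-marginal to concentrate on the diagonal, so the disintegration takes the claimed form $\pi^\star(z,v,u\mid y) = \pi^\star(v,u\mid y)\,\delta(y-z)$. The remaining marginal constraints on $(z,v)$ and $(y,u)$, combined with the product structure $\eta = \nu_\Y \otimes \eta_\V$, then translate directly into $\pi^\star(\cdot,\cdot\mid y) \in \Pi(\eta_\V,\nu(\cdot\mid y))$ for $\nu_\Y$-a.e.\ $y$. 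Writing
\begin{equation*}
K(\pi^\star) = \int_\Y \int_{\V\times\U} c(y,v;y,u)\, \dd\pi^\star(v,u\mid y)\, \dd\nu_\Y(y) \;\geq\; \int_\Y J(y)\, \dd\nu_\Y(y),
\end{equation*}
where $J(y)$ denotes the inner infimum in the theorem statement, reduces the problem to proving equality in this display.

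To close the argument, I would invoke a measurable-selection theorem (Aumann or Castaing--Valadier type) using continuity of $c$ and the regularity of the parametric family $y \mapsto \Pi(\eta_\V,\nu(\cdot\mid y))$ to obtain a $\nu_\Y$-measurable family $y \mapsto \tilde\pi_y$ of conditional optimizers. Gluing via $\dd\tilde\pi(z,v,y,u) := \delta(y-z)\, \dd\tilde\pi_y(v,u)\, \dd\nu_\Y(y)$ then yields a feasible competitor whose cost equals $\int_\Y J(y)\, \dd\nu_\Y(y)$, forcing equality above for any minimizer. Combined with the pointwise inequality $J(y) \leq \int c(y,v;y,u)\,\dd\pi^\star(v,u\mid y)$, this identifies $\pi^\star(\cdot,\cdot\mid y)$ as an optimizer of the inner problem $\nu_\Y$-almost everywhere.

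The main obstacle I anticipate is the measurable-selection step: in the infinite-dimensional Hilbert setting one must verify that $y \mapsto \Pi(\eta_\V,\nu(\cdot\mid y))$ is a Borel (or at least universally) measurable set-valued map with compact values, and that $y \mapsto \nu(\cdot\mid y)$ is a genuine regular conditional probability with enough regularity for the inner cost to depend measurably on $y$. Polishness of separable Hilbert spaces should make this work, but the technical bookkeeping---choosing the right topology on $\PP(\V\times\U)$ and confirming joint measurability of $(y,\pi) \mapsto \int c(y,v;y,u)\,\dd\pi(v,u)$---is where most of the effort will go.
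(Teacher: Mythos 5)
Your proposal is correct, and for part (i) it takes a genuinely different route from the paper. The paper constructs an explicit optimizer directly: it applies the measurable-selection result (Villani, Cor.~5.22, restated as \Cref{measurable_selection}) to choose conditional optimizers $\pi^y \in \Pi(\eta_\V,\nu(\cdot\mid y))$ measurably in $y$, glues them into $\pi^\star(z,v,y,u) = \pi^y(v,u)\,\delta(y-z)\,\eta_\Y(y)$, and then verifies $K(\pi) \geq K(\pi^\star)$ for every competitor $\pi \in \Pi_\Y$; this single construction delivers (i) and (ii) simultaneously. You instead prove (i) by the direct method (Prokhorov tightness from the fixed marginals, weak closedness of $\Pi_\Y$ since the diagonal $\{z=y\}$ is closed and the marginal constraints are weakly closed, lower semicontinuity of $K$ from $c$ continuous and bounded below), and use measurable selection only in the characterization step (ii). Your version of (ii) inverts the paper's logic: rather than showing a constructed $\pi^\star$ is optimal, you disintegrate an arbitrary minimizer and compare against a measurably-glued competitor to force pointwise optimality, which actually yields the slightly stronger conclusion that \emph{every} minimizer has conditionally optimal slices (the paper only exhibits one such minimizer, which is all the theorem claims). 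The technical obstacle you flag — measurability of $y \mapsto \Pi(\eta_\V,\nu(\cdot\mid y))$ and of the inner optimal cost — is real, but it is exactly what \Cref{measurable_selection} packages up, using continuity of $c$ and measurability of $y\mapsto(\eta(\cdot\mid y),\nu(\cdot\mid y))$ from \Cref{conditional_measure_prop}; you would cite the same result, so the bookkeeping is lighter than you anticipate. The trade-off: the paper's route is more constructive and compact; your route is more modular, more textbook, and gives existence without invoking measurable selection at all.
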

This theorem follows directly from \Cref{prop:conditional_kantorovich}. We note that 
the assumption that the infimum of \eqref{conditional-Kantorovich} is 
finite is often mild in practice and can be reduced to appropriate moment conditions 
on $\eta$ and $\nu$; see \Cref{rem:Finite-optimal-cost} for details.

Our second contribution is a strong duality result under slightly 
different assumptions on the cost $c$ which 
follows from \Cref{prop:dualValue}.

\begin{theorem}\label{main-thm:conditional-duality}
Consider \Cref{prob:conditional-kantorovich} with a lower semi-continuous and non-negative 
cost $c$. Then it holds that
    \begin{equation*}
       \begin{aligned}
                  \inf_{\pi \in \Pi_\Y(\eta, \nu)}  K(\pi) 
&= \sup_{\psi\in L^{1}(\RefM)}\left(\int_{\Y\times\U}\psi^c_v(z,u)\dd\TargetM(z,u)-\int_{\Y\times\V}\psi(z,v)\dd\RefM(z,v)\right), \\ 
&= \sup_{\phi\in L^{1}(\TargetM)}\left(\int_{\Y\times\U}\phi(y,u)\dd\TargetM(y,u)-\int_{\Y\times\V}\phi^c_u(y,v)\dd\RefM(y,v)\right),
       \end{aligned}       
    \end{equation*}
    where  
    $\psi^c_v(z, u) := \inf_v \bigg( \psi(z, v) + c(z, v; z, u)  \bigg)$ and 
    $\phi^c_u(y, v) := \sup_u \bigg( \phi(y, u) - c(y, v; y, u)  \bigg)$ are 
    the partial $c$-transfroms of $\psi$ and $\phi$ respectively.
\end{theorem}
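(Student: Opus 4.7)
The plan is to deduce strong duality fiberwise from classical Kantorovich duality, using the disintegration of the optimal coupling established in Theorem~\ref{main-thm:conditional-Kantorovich-solvability}, and then to lift the family of fiberwise potentials to a single joint potential via a measurable selection argument. I treat both equalities simultaneously, since the $\phi$-formulation is symmetric to the $\psi$-formulation under the usual swap of the roles of source and target.

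Weak duality is direct: for any $\psi \in L^1(\eta)$ and any $\pi \in \Pi_\Y(\eta,\nu)$, the pointwise bound $\psi^c_v(z,u) \leq \psi(z,v) + c(z,v;z,u)$ together with the facts that $\pi$ is supported on $\{z = y\}$ and has marginals $\eta$ on $(z,v)$ and $\nu$ on $(y,u)$ integrates to
\begin{equation*}
    \int_{\Y\times\U} \psi^c_v(z,u) \, \dd\nu(z,u) - \int_{\Y\times\V} \psi(z,v) \, \dd\eta(z,v) \leq \int c \, \dd\pi = K(\pi).
\end{equation*}
Taking the infimum over $\pi$ and the supremum over $\psi$ yields $\sup \leq \inf$, and the parallel pointwise inequality $\phi(y,u) - c(y,v;y,u) \leq \phi^c_u(y,v)$ handles the $\phi$-formulation.

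For the reverse inequality, Theorem~\ref{main-thm:conditional-Kantorovich-solvability}(ii) yields
\begin{equation*}
    \inf_{\pi \in \Pi_\Y(\eta,\nu)} K(\pi) = \int_\Y \inf_{\pi_y \in \Pi(\eta_\V,\nu(\cdot \mid y))} \int_{\V \times \U} c_y(v,u) \, \dd\pi_y(v,u) \, \dd\nu_\Y(y),
\end{equation*}
where the fiber cost $c_y(v,u) := c(y,v;y,u)$ inherits lower semi-continuity and non-negativity from $c$. Classical Kantorovich duality for Polish spaces with lsc non-negative cost (e.g., Villani, Theorem~5.10) applies to each fiber on the separable Hilbert space $\V \times \U$ and gives
\begin{equation*}
    \inf_{\pi_y} \int c_y \, \dd\pi_y = \sup_{\psi_y \in L^1(\eta_\V)} \left( \int_\U (\psi_y)^{c_y}(u) \, \dd\nu(u \mid y) - \int_\V \psi_y(v) \, \dd\eta_\V(v) \right)
\end{equation*}
for $\nu_\Y$-almost every $y$.

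The main obstacle is converting the family of fiberwise $\eps$-optimizers $\{\psi^\eps_y\}_{y\in\Y}$ into a single joint potential $\psi^\eps(y,v) := \psi^\eps_y(v)$ that is jointly measurable and lies in $L^1(\eta)$. I would invoke the Kuratowski--Ryll-Nardzewski selection theorem applied to the multifunction $y \mapsto \{\eps\text{-optimizers for fiber } y\}$, combined with a per-fiber truncation and additive-constant normalization that produces a uniform integrable envelope controlling $\|\psi^\eps(y,\cdot)\|_{L^1(\eta_\V)}$ in $y$. The identity $(\psi^\eps)^c_v(y,u) = (\psi^\eps_y)^{c_y}(u)$ is immediate from the definition of the partial transform since its infimum runs only over $v$, so Fubini's theorem yields
\begin{equation*}
    \int_{\Y\times\U} (\psi^\eps)^c_v \, \dd\nu - \int_{\Y\times\V} \psi^\eps \, \dd\eta \geq \inf K - \eps.
\end{equation*}
Letting $\eps \downarrow 0$ closes strong duality for the $\psi$-formulation, and the same selection argument applied to fiberwise $\phi$-potentials handles the $\phi$-formulation.
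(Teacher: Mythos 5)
Your strategy diverges from the paper's and, as written, leaves the hard part unresolved. The paper does not disintegrate at all: it rewrites the constrained problem \eqref{eq:Constrained Kantorovich} as the unconstrained problem \eqref{eq:kantorovich-unconstrained} with the extended-valued cost $c_\chi = c + \chi$, where $\chi$ penalizes $y\ne z$ with $+\infty$. Because $\{y=z\}$ is closed, $\chi$ is lower semicontinuous, so $c_\chi$ inherits lsc and non-negativity from $c$, and classical strong duality (Villani Thm.~5.10) applies once, directly on the product space. The identities $\psi^{c_\chi} = \psi^c_v$ and $\phi^{c_\chi} = \phi^c_u$ then follow by noting that the $\inf$ (resp.\ $\sup$) over $(z,v)$ (resp.\ $(y,u)$) collapses to the constrained set where $c_\chi$ is finite. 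No disintegration and no selection of potentials is needed.

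Your fiberwise route has two genuine gaps. First, you invoke \Cref{main-thm:conditional-Kantorovich-solvability}(ii) to pass to the integral of fiberwise optima, but that result is proved under the stronger hypothesis that $c$ is continuous (continuity is used precisely to run the measurable selection \Cref{measurable_selection} on the primal side), whereas \Cref{main-thm:conditional-duality} only assumes $c$ lsc and non-negative. So the fiberwise decomposition of the optimal value is not available under the stated assumptions; you would first have to re-prove it for lsc costs, and the selection theorem you would lean on is stated for continuous costs. Second, the ``lifting'' step from a family $\{\psi^\eps_y\}$ of fiberwise $\eps$-optimal potentials to a single $\psi^\eps\in L^1(\eta)$ is exactly where the technical difficulty lives: Kuratowski--Ryll-Nardzewski gives a measurable selection from a suitable measurable multifunction, but you have not shown that the $\eps$-argmax multifunction $y\mapsto\{\eps\text{-optimizers}\}$ (valued in a function space on $\V$, equipped with an appropriate Borel structure) is a measurable multifunction with closed nonempty values, nor that your truncation/normalization yields a joint potential that is measurable in $(y,v)$ and globally $\eta$-integrable. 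Finally, you do not address the case where the primal value is $+\infty$ (the theorem makes no finiteness assumption), which the $c_\chi$ argument handles for free. Each of these can in principle be repaired, but taken together they amount to reproving a substantial portion of Kantorovich duality, whereas the paper's change-of-cost device sidesteps all of it.
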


For our third contribution we 
turn our attention to \Cref{prob:conditional-monge} and show that 
the conditional Kantorovich problem has a solution that is given by 
a unique transport map. This result is stated in \Cref{prop:conditional_monge_solvability} under slightly more general assumptions. 

\begin{theorem}\label{main-theorem:conditional-monge}
    Suppose $\V = \U$ and consider 
    the cost $c(z, v; y, u) = \| u - v \|_\U^p$ for $p \in (1, \infty)$. 
    Suppose that (a) the measures $\eta, \nu$ have finite $p$-th moments; (b) 
    $\eta_\V$ is regular according to \Cref{def:regular-measure};
    and (c) the infimum in \eqref{conditional-Kantorovich} is feasible and finite. 
    Then there exists a map $T^\star$ that uniquely solves \Cref{prob:conditional-monge}
    and  $\pi^\star = (\Id \times T^\star) \# \eta$ is 
    the unique solution to \Cref{prob:conditional-kantorovich}.
\end{theorem}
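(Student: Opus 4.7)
The plan is to reduce the conditional Monge problem to a parametrized family of standard Monge problems on $\U$, one for each conditioning value $y$, and then reassemble their solutions into a single measurable map. The key engine is \Cref{main-thm:conditional-Kantorovich-solvability}, which already tells us that any optimal Kantorovich coupling disintegrates along $y$ into couplings that are optimal for the \emph{ordinary} OT problem on $\U$ with cost $\|u-v\|_\U^p$.

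First I would invoke \Cref{main-thm:conditional-Kantorovich-solvability} to obtain an optimal $\pi^\star$ whose disintegration satisfies $\pi^\star(z,v,u\mid y)=\pi^\star_y(v,u)\,\delta(y-z)$, with $\pi^\star_y$ optimal for the unconstrained Monge--Kantorovich problem between $\eta_\V$ and $\nu(\cdot\mid y)$ with cost $\|u-v\|_\U^p$, for $\nu_\Y$-a.e.\ $y$. To prepare the ground for Brenier-type existence, I would verify that the finite $p$-th moment assumption on $\nu$ yields, via disintegration and Fubini, finite $p$-th moments of $\nu(\cdot\mid y)$ for $\nu_\Y$-a.e.\ $y$; combined with the standing assumption that $\eta_\V$ has finite $p$-th moments, each fiber problem is a well-posed $p$-cost OT problem between two measures on a separable Hilbert space.

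Second, for each such $y$ I would apply the infinite-dimensional analogue of the Brenier/Gangbo--McCann theorem for strictly convex costs of the form $\|u-v\|_\U^p$, $p\in(1,\infty)$, using the regularity of $\eta_\V$ (\Cref{def:regular-measure}) as the source measure. This yields a $\eta_\V$-a.e.\ unique optimal Monge map $T^\star_\U(y,\cdot):\V\to\U$ with $T^\star_\U(y,\cdot)\#\eta_\V=\nu(\cdot\mid y)$ and $\pi^\star_y=(\mathrm{Id}\times T^\star_\U(y,\cdot))\#\eta_\V$. Uniqueness at each fiber is crucial because it lets me identify the map through the disintegration rather than constructing it by selection.

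Third, I would upgrade this fiberwise family into a jointly measurable map $T^\star(y,v)=(y,T^\star_\U(y,v))$. The cleanest route is to read $T^\star_\U$ directly off the disintegration of $\pi^\star$: uniqueness at each fiber forces $\pi^\star_y$ to be concentrated on the graph of a map, so the regular conditional probability kernel $\pi^\star_y$ itself is a Dirac kernel in $u$ given $(y,v)$, and joint measurability of $T^\star_\U$ follows from measurability of the disintegration. Setting $T^\star(y,v)=(y,T^\star_\U(y,v))$ then yields an admissible map for \Cref{prob:conditional-monge}, and $\pi^\star=(\mathrm{Id}\times T^\star)\#\eta$. Uniqueness of $T^\star$ among solutions of the conditional Monge problem follows because any competing map would induce an admissible coupling for \Cref{prob:conditional-kantorovich}, whose disintegration would have to agree with $\pi^\star_y$ for $\nu_\Y$-a.e.\ $y$ by the fiberwise Brenier uniqueness, hence the maps agree $\eta$-a.e.

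The principal obstacle I expect is the combination of the fiberwise Brenier step with the measurability of the resulting selection in the infinite-dimensional Hilbert setting: one must ensure that the notion of regularity in \Cref{def:regular-measure} is preserved well enough to invoke a Brenier-type theorem uniformly across $\nu_\Y$-a.e.\ fiber, and that the disintegration machinery provides the needed joint measurability without any extra separability or Polish-space pathologies. The moment-propagation from the joint $\nu$ to its conditionals, while standard, will need to be handled carefully to guarantee that the set of admissible $y$ has full $\nu_\Y$-measure before gluing the fiber maps together.
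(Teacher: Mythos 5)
Your proposal follows essentially the same route as the paper's proof of \Cref{prop:conditional_monge_solvability}: start from the optimal coupling supplied by \Cref{prop:conditional_kantorovich}, disintegrate along $y$, apply the Hilbert-space Brenier-type result (\Cref{ambrosio_hilbert_space}) fiberwise using regularity of $\eta_\V=\eta(\cdot\mid y)$ to get unique Monge maps $T_y$, read the concatenated map $T^\star_\U(y,v)=T_y(v)$ off the disintegration to secure joint measurability, and invoke fiberwise uniqueness plus \Cref{rem:uniqueness} for uniqueness of $T^\star$. Your presentation of the measurability step (via the Dirac-kernel structure of the disintegration) and your explicit note on propagating the finite $p$-th moment hypothesis to the conditionals $\nu(\cdot\mid y)$ are both consistent with, and slightly more careful than, the paper's terser treatment, but there is no genuinely different idea or decomposition.
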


We can formally think of the map $T^\star$ as the map that is obtained by considering 
the OT maps $T^\star_y 
= \argmin_{T  \: : \: T\# \eta_\V = \nu(\cdot \mid y)} 
\int_\U \| u -T(u) \|_\U^p \dd \eta_\V(u)$ that transport $\eta_\V$
to the conditionals $\nu(\cdot \mid y)$ for a.e. $y$. One can then 
concatenate these maps along the $y$ coordinate to define the map $T_\U(y, v) = T_y(v)$
in \Cref{prob:conditional-monge}. This turns out to be 
the correct approach although it 
leads to delicate issues of measurable selection for $T_\U$ as dealt with in \cite{carlier2016vector}. Here we circumvent these technical details to the 
assumption that \eqref{conditional-Kantorovich} has a feasible solution which transfers 
the issue of measurable selection to the Kantorovich problem where it can be 
overcome easily.

Our next theoretical contribution concerns
the particular case of BIPs where we show that, 
in the finite dimensional case where $\U = \R^d$ and
under 
regularity assumptions on the likelihood $\Phi$ and the prior measure $\mu$, 
 the 
conditioning maps $T^\star_\U$ (the $\U$-components of the Monge solution) 
are  regular in $y$ and  further obtain error bounds for 
numerical approximations of these maps;
the detailed statement of these results can be found in 
\Cref{prop:prior-to-posterior-Brenier-map-with-rates}. 

Finally, we present a set of numerical experiments that explore various 
practical strategies for the solution of \Cref{prob:conditional-monge} and 
approximation of the maps $T_\U^\star(y, u)$ towards conditional simulation of 
probability measures. 
Our algorithms and experiments, including an instance 
of the Darcy flow inverse problem \eqref{Darcy-PDE} are collected in \Cref{sec:numerics}.

\subsection{Review of relevant literature}\label{subsec:rev:lit}
Below we review the relevant literature to our work with a particular 
focus towards triangular transport and conditional simulation. For a comprehensive review of OT theory see \cite{villani-OT,figalli2021invitation, ambrosio2005gradient, santambrogio2015optimal}
and for computational aspects  see \cite{peyre2019computational}.

\subsubsection{Triangular transport and OT}\label{subsec:rev:TT-OT}

The OT characterization of triangular transport maps 
goes back to  \cite{carlier2010knothe, bonnotte2013knothe}
that characterized the classic triangular rearrangements of Knothe \cite{knothe1957contributions} and 
Rosenblatt \cite{rosenblatt1952remarks} as the limit of a sequence of 
OT problems with a singularly perturbed quadratic cost.
We show in \Cref{subsec:relaxedProblem} that a similar 
limiting argument is applicable in our broader setting as well.
The Knothe-Rosenblatt (KR) map  is a classic transport map 
originally defined in the finite-dimensional setting which is 
built by successively transporting one measure into another one 
by 1D transport of their conditionals using deterministic maps; 
see \cite[Sec.~2.3]{santambrogio2015optimal} for a detailed definition. 
The KR map is an example of a fully triangular map according 
to our \Cref{def:triangular-map} and it was further  
generalized to triangular maps defined on Hilbert spaces 
in \cite{bogachev2005triangular, bogachev2006nonlinear} where their abstract
theory was developed; also see \cite[Sec.~10.10(vii)]{bogachev2}.

The OT characterization of triangular maps towards conditional 
simulation is a contemporary topic and the main results in this direction 
were presented in the seminal work \cite{carlier2016vector}
where 
an analogue for Brenier's theorem was presented for the 
characterization of parametric families of transport maps $T_\U$
that would characterize the conditionals $\nu(\cdot \mid y)$ of a 
given target measure $\nu$, i.e., $T_\U( y, \cdot) \# \eta_\U = \nu(\cdot \mid y)$.  However, the main goal of the authors in that work was to introduce 
a novel characterization of quantiles for vector valued random variables 
and the theory was not tailored to conditional simulation or triangular 
transport. This extension was discussed in the recent 
works \cite{wang2023efficient, al2023optimal, hosseini-taghvaei-OTPF-2, alfonso2023generative, taghvaei2022optimal, bunne2022supervised, muzellec2019subspace} although these 
works were focused on finite-dimensional settings with 
quadratic cost functions and are mainly concerned with the 
design of algorithms. 
Our analysis is heavily inspired by these previous works and in particular  \cite{carlier2016vector},
but deviates from them in three directions: (a) we develop 
our theory on the Hilbert and Polish space settings which is crucial for 
PDE inverse problems and high-dimensiona inference; 
(b) we study OT problems for arbitrary costs and give refined 
results for classic choices of the cost such as the quadratic case; (c) we give a detailed characterization of the corresponding constrained Kantorovich 
problems that lead to triangular Monge maps.

 \subsubsection{Triangular transport and inference}\label{subsec:rev:TT-UQ}

 The algorithmic aspects of 
 triangular transport maps for conditional sampling 
 and  Bayesian inference were developed in~\cite{marzouk2016sampling,marzouk-opt-map}
that primarily relies on the KR constructions of triangular 
maps for sampling of target measures or preconditioning of MCMC 
algorithms while the extension to general block-triangular 
maps  was presented in \cite{MGAN, alfonso2023generative} with the former developing algorithms 
using monotone triangular transport maps and the latter focusing 
on OT formulations akin to ours with quadratic costs.
More recently these methods have 
been extended for amortized inference with applications in 
geoscience
 \cite{baldassari2023conditional, yin2023solving, siahkoohi2023reliable, siahkoohi2021preconditioned} and  
 filtering and data assimilation \cite{spantini2019coupling,
 ramgraber2022ensemble, ramgraber2022ensemble-2} with  \cite{taghvaei2022optimal, al2023optimal, al2023optimal, grange2023computational} focusing on OT formulations 
 that are particular instances of our formulation of the conditional Monge 
 problem. These works further characterize the map $T^\star_\U$ 
 via a dual formulation using the conditional analogue of Brenier's theorem 
 presented in \cite{carlier2016vector} which is also the basis 
 of the methodologies developed in \cite{wang2023efficient, bunne2022supervised, muzellec2019subspace}.

\subsubsection{Triangular transport and generative modeling}\label{subsec:rev:TT-ML}
Broadly speaking, modern generative models  \cite{ng2002discriminative, jebara2012machine} 
solve the problem of generating samples from a target measure as a  transport  problem 
with a deterministic, stochastic, or dynamic formulation. These models have 
become very popular in recent years thanks to the success of various families of 
neural net based techniques such as GANs \cite{goodfellow2016nips,goodfellow2016deep, goodfellow2020generative},
NFs \cite{rezende2015variational, kobyzev2020normalizing, papamakarios2019normalizing, onken2021ot}, and
DMS \cite{sohl2015deep, song2020improved, kingma2021variational, cao2022survey, albergo2023stochastic, baptista-function-space-score}. Indeed GANs and NFs provide deterministic 
maps while DMs give a dynamic formulation in the form of an stochastic differential equation with 
a score function (i.e. drift) that is parameterized by a network. Among these models, NFs are 
the only ones that rely on triangular transport maps but the conditioning 
properties of these maps is rarely utilized or acknowledged in the NF literature. Instead, the triangular 
structure is mainly utilized for building maps that are easily invertible and their 
Jacobian determinants can be computed efficiently. 

While traditional 
generative modeling does not focus on conditional simulation, the methodology can be generalized to 
conditional simulation; see
\cite{trippe2018conditional, winkler2019learning, lueckmann2019likelihood, wang2023efficient} for conditional NFs, 
\cite{mirza2014conditional, adler2018deep, liu2021wasserstein, MGAN} for conditional GANs,
and \cite{batzolis2021conditional, saharia2022image} for conditional DMs. Here one considers
pushing a reference $\RefM_\U$ to the conditionals $\nu(\cdot \mid y)$ 
with a map $T_\U(y, \cdot)$ that is parameterized by $y$ and in this light our 
setting can be viewed as the OT analogue of such conditional generative models. 
The works \cite{adler2018deep, ray2022efficacy, zhou2022deep, 
baptista2020adaptive, wang2023efficient, MGAN} are perhaps the closest to our work 
in this domain although they do  not present a systematic study of conditional OT at our
level of generality. 

\subsubsection{Other interesting connections}\label{subsubsec:rev:interesting}

Here we mention a number of article in the literature that are related to our work 
but do not directly study the conditional OT or conditional simulation problems. 
While our focus here is not on algorithm development, we mention that the applied 
analysis of transport problems has become a vibrant area of research in recent years. 
The statistical analysis of OT maps has been an intense research area recently
\cite{hutter2021minimax, divol2022optimal, ghosal2022multivariate, deb2021rates, manole2021plugin} 
although the majority of these works, with the exception of \cite{hutter2021minimax}, focus on sample complexities and generalization 
bounds for transport maps as opposed to approximation errors due to parameterization of the 
maps. In contrast the article \cite{baptista2023approximation} gives a general framework for  
analyzing such approximation errors for minimum divergence estimators but these are different from OT maps. In parallel with the above a growing literature has been developed around the
analysis of triangular transport maps and in particular the KR map: 
 \cite{zech2022sparse-I, zech2022sparse-II} show that the KR map is analytic under some 
 assumptions and can be approximated with neural nets or sparse polynomials; 
 \cite{irons2021triangular, wang2022minimax} consider minimum KL estimators of KR maps for sampling and density estimation  and 
 study their statistical consistency and sample complexities; Finally,
 \cite{jaini2020tails} analyzes the tail behavior of triangular maps revealing an intricate balance between
 the tails of the reference and target measures and the expressivity of Lipschitz triangular maps.

 We also mention the literature on {\it sliced Wasserstein distances} \cite{rabin2010geodesic, rabin2012wasserstein, bonneel2015sliced, mahey2023fast} and {\it sliced 
 measure transport} \cite{li2023measure, li2023approximation}. Broadly speaking, 
the sliced Wasserstein distance is computed by taking random 1D projections of 
the reference and the target, and averaging the 1D Wasserstein distance between 
the requisite projections. The resulting distance imposes a similar topology to the 
Wasserstein distance but it can be computed very efficiently in practice. The resulting 
formulation has a lot of similarities to our conditional Monge problem \eqref{conditional-Monge}
for costs that are independent of the $z$ variable. Another related concept is 
that of pairs of measures with
{\it overlapping marginals} that arise in distributionally robust optimization \cite{ruschendorf1991bounds, embrechts2010bounds, fan2023quantifying}, these are 
precisely pairs of measures whose marginals on a given subspace match, akin to our
assumption that the $\RefM_\Y = \TargetM_\Y$  in \Cref{prob:conditional-monge} and \Cref{prob:conditional-kantorovich}.

\subsection{Outline}\label{subsec:outline}
The rest of the article is organized as follows: we review some preliminaries 
and  useful technical results in \Cref{sec:prelims}. Our main 
theoretical results are presented in \Cref{sec:conditionalOT}
followed by their application  to the particular case of BIPs in \Cref{sec:application}.
Our algorithms and numerical experiments are collected in \Cref{sec:numerics}
followed by our conclusions in \Cref{sec:conclusion}.

\section{Preliminaries}\label{sec:prelims}

In this section we collect some notations, definitions, and  preliminary results that are used throughout the rest of the article. We collect some notation in \Cref{subsec:notation}.
\Cref{subsec:RegConditionalMeasures} reviews conditional measures and 
for the most part 
follows \cite[Sec.~10.4]{bogachev2}
while \Cref{subsec:OTreview} recalls 
 useful results from OT theory following \cite{villani-OT, ambrosio2005gradient}.

\subsection{Notation}\label{subsec:notation}
Throughout the article we primarily work with Polish spaces and 
separable Banach or Hilbert spaces but the reader may take all of our function 
spaces to be separable and Hilbert in a first reading. We also reserve 
upper case calligraphic letters to denote function spaces. For a Polish space 
$\Y$ we write $\B(\Y)$ to denote its Borel sigma algebra and $\PP(\Y)$ to 
denote the space of Borel probability measures on $\Y$. For $\varrho \in \PP(\Y)$ and 
a function $f:\Y \to \R$ we write $\int_\Y f(y) \: \dd \varrho(y)$ to denote the expectation 
or integral of $f$ with respect to $\varrho$ and sometimes use 
the shorthand notation $\int f \dd \varrho$ when no confusion may arise. 
We often work on product spaces 
and need to compute integrals with respect to marginals and
write $\int_\Y f(x, y) \varrho(x, \dd y)$ to highlight the variable with respect to 
which the partial integral is computed. For $\vrho \in \PP( \Y \times \U)$ we 
write $\vrho_\Y$ and $\vrho_\U$ to denote its $\Y$ and $\U$-marginals respectively.

\subsection{Conditional measures and triangular maps}\label{subsec:RegConditionalMeasures}
Let $\Y, \U$ be Polish spaces and  for a set 
$B \in \B( \Y \times \U)$ define the slices $B^y: = \{ u \mid (y, u) \in B \}$. 
We recall the following definition of (regular) conditional measures following \cite{bogachev2}:

 \begin{definition}\label{def:conditional-measure}
  A function  $(B, y) \mapsto \vrho(B \mid y)$ is a (regular) \emph{conditional measure} of 
  $\vrho \in \PP(\Y \times \U)$ if: (a) For every fixed $y \in \Y$ it holds that $\vrho( \cdot \mid y) \in \PP(\U)$;
  (b) For every fixed $B \in \B(\U)$ the function $y \mapsto \vrho(B \mid y)$ is
    measurable with respect to $\B(\Y)$ and  $\vrho_\Y$-integrable;
  and (c) For all $B \in \B(\Y \times \U)$ it holds that
      $\vrho(B) = \int_\Y \vrho(B^y \mid y ) \vrho_\Y(\dd y).$
\end{definition}
We further recall the following  
facts  from 
\cite[Lem.~10.4.3 and Cor.~10.4.10]{bogachev2}:

\begin{proposition}
\label{conditional_measure_prop}
Consider $\varrho \in \PP(\Y \times \U)$ with $\Y, \U$ Polish spaces. 
  Then the conditional measures 
  $\vrho( \cdot \mid y )$ for $\vrho$ exist and 
  they are essentially unique, 
  i.e., there exists
    a set $E \in \B(\Y)$ so that $\vrho_\Y(E) = 0$ and the $\vrho(\cdot \mid y)$ 
    are unique for all
    $y \in \Y \setminus E$.
\end{proposition}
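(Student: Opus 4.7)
The plan is to establish existence and uniqueness via a Radon--Nikodym construction on a countable generating subfamily of $\B(\U)$, combined with the Polish-space structure to guarantee that the pointwise objects we build are genuine Borel probability measures. Since $\U$ is Polish, its Borel $\sigma$-algebra is countably generated, and this is what lets us collapse the uncountably many ``exceptional null sets'' produced by Radon--Nikodym into a single $\vrho_\Y$-null set $E$.

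For existence, I would first fix a countable algebra $\A \subset \B(\U)$ that generates $\B(\U)$ and contains $\U$. For each $B \in \A$, the set function $A \mapsto \vrho(A \times B)$ defines a finite Borel measure on $\Y$ that is absolutely continuous with respect to $\vrho_\Y$, so by Radon--Nikodym there is a Borel density $f_B : \Y \to [0,1]$, and I set $\vrho(B \mid y) := f_B(y)$. Away from a countable union of $\vrho_\Y$-null sets (one for each finite additivity relation and one for the normalization $f_\U \equiv 1$), the map $B \mapsto \vrho(B \mid y)$ is a finitely additive probability set function on $\A$. The crucial step is to promote this to countable additivity: here I would invoke that $\U$ is Polish, hence Radon, so for each $B \in \A$ and $\varepsilon > 0$ one can approximate $\vrho(B \mid y)$ from inside by compacts, and a Daniell-type / Carath\'eodory extension then yields a well-defined Borel probability measure $\vrho(\cdot \mid y)$ on $\B(\U)$ for every $y$ outside a single $\vrho_\Y$-null set $E$. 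Measurability of $y \mapsto \vrho(B \mid y)$ for every $B \in \B(\U)$ follows by a monotone class argument from measurability on the generating algebra $\A$, and the disintegration identity
\begin{equation*}
\vrho(B) \;=\; \int_\Y \vrho(B^y \mid y)\, \vrho_\Y(\dd y), \qquad B \in \B(\Y \times \U),
\end{equation*}
is verified first for rectangles $A \times B$ with $A \in \B(\Y)$ and $B \in \A$ (where it is the definition), and then extended to all Borel sets by another monotone class step.

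For essential uniqueness, suppose $\vrho_1(\cdot \mid y)$ and $\vrho_2(\cdot \mid y)$ are two regular conditional measures. For each fixed $B \in \A$, both $y \mapsto \vrho_i(B \mid y)$ are versions of the Radon--Nikodym derivative $\dd [\vrho(\cdot \times B)]/\dd \vrho_\Y$, hence coincide $\vrho_\Y$-a.e.\ on a set $N_B$ with $\vrho_\Y(N_B) = 0$. Setting $E := \bigcup_{B \in \A} N_B$ (countable union, still $\vrho_\Y$-null), the two measures $\vrho_1(\cdot \mid y)$ and $\vrho_2(\cdot \mid y)$ agree on the generating algebra $\A$ for every $y \in \Y \setminus E$, and by the uniqueness of extension they agree on all of $\B(\U)$.

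The main technical obstacle is the existence half: going from ``finitely additive on $\A$, $\vrho_\Y$-a.e.'' to a genuine countably additive probability measure for \emph{every} $y$ outside a single null set. Naively handling one Borel set at a time yields an uncountable union of null sets and hence no meaningful statement. The resolution, and the reason Polish (or more generally Radon) spaces are required, is that countable additivity on $\A$ can be reduced to an inner-regularity property tested against compact subsets, of which only countably many relations must be verified. Once this is in place the remaining steps (monotone class extensions and Radon--Nikodym uniqueness) are routine.
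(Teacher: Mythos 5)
The paper does not prove this proposition itself; it cites \cite[Lem.~10.4.3 and Cor.~10.4.10]{bogachev2} and moves on. Your sketch is exactly the standard construction underlying that cited result: Radon--Nikodym densities on a countable generating algebra, collapse of the countably many exceptional null sets into a single $E$, upgrade from finite to countable additivity via tightness of measures on Polish spaces, and a monotone-class pass for measurability, the disintegration identity, and uniqueness. You correctly flag the one genuinely delicate step (going from ``finitely additive on $\A$, $\vrho_\Y$-a.e.'' to a bona fide Borel probability measure for all $y\notin E$); to make that step airtight one usually fixes, in advance, an increasing sequence of compacts $K_n\subset\U$ with $\vrho_\U(\U\setminus K_n)<1/n$, adjoins the countably many conditions ``$\vrho(K_n\mid y)\to 1$'' to the list of a.e.\ identities, and then checks that these inner-regularity constraints force countable additivity on $\A$ so that the Carath\'eodory extension applies. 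With that elaboration your proof is complete and coincides with the argument behind the paper's reference.
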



We can further characterize conditional measures  via 
triangular transport maps. 
The proof of the following proposition is nearly identical to 
\cite[Thm.~2.4]{MGAN} and is therefore omitted. 

\begin{proposition}\label{prop:triangular-transport-from-MGAN}
    Let $\Z, \V, \Y, \U$ be Polish spaces and 
    consider $\RefM\in \PP(\Z \times \V)$ and
    $\TargetM \in \PP( \Y \times \U)$. If $T \# \RefM = \TargetM$ for some $T \in 
    \TT( \Z, \V ; \Y, \U)$ (recall \Cref{def:triangular-map}) 
    then $T_\U(z, \cdot) \# \RefM(\cdot \mid z) = \nu( \cdot \mid T_\Y(z))$, for
    $\RefM_\Y$-a.e $z$. 
\end{proposition}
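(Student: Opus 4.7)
The strategy is to exploit the triangular structure of $T$ to rewrite the pushforward $T\#\RefM$ in disintegrated form and then invoke essential uniqueness of conditional measures (\Cref{conditional_measure_prop}) to identify the conditionals $T_\U(z,\cdot)\#\RefM(\cdot\mid z)$ with $\TargetM(\cdot \mid T_\Y(z))$. Since $\Z, \V, \Y, \U$ are Polish, all disintegrations in sight are well-defined and essentially unique, so the argument reduces to a direct computation on product sets $A\times B\in\B(\Y)\otimes\B(\U)$ followed by a standard monotone class / separability argument.

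First, I would derive the marginal relation $\TargetM_\Y = T_\Y\#\RefM_\Z$. This is immediate from projecting $T\#\RefM=\TargetM$ onto $\Y$ and using that the $\Y$-component of $T$ only depends on $z$: for any $A\in\B(\Y)$,
\begin{equation*}
\TargetM_\Y(A)=\RefM\bigl(\{(z,v):T_\Y(z)\in A\}\bigr)=\RefM_\Z\bigl(T_\Y^{-1}(A)\bigr)=(T_\Y\#\RefM_\Z)(A).
\end{equation*}

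Next, for a product set $A\times B$ I would expand both sides of $T\#\RefM(A\times B) = \TargetM(A\times B)$. Using the disintegration of $\RefM$ and the triangular form $T(z,v)=(T_\Y(z),T_\U(T_\Y(z),v))$,
\begin{equation*}
T\#\RefM(A\times B)=\int_\Z \mathbf{1}_A\bigl(T_\Y(z)\bigr)\,\bigl[T_\U(T_\Y(z),\cdot)\#\RefM(\cdot\mid z)\bigr](B)\,\RefM_\Z(\dd z),
\end{equation*}
while disintegrating $\TargetM$ and using $\TargetM_\Y=T_\Y\#\RefM_\Z$ gives
\begin{equation*}
\TargetM(A\times B)=\int_\Y \mathbf{1}_A(y)\,\TargetM(B\mid y)\,\TargetM_\Y(\dd y)=\int_\Z \mathbf{1}_A\bigl(T_\Y(z)\bigr)\,\TargetM\bigl(B\mid T_\Y(z)\bigr)\,\RefM_\Z(\dd z).
\end{equation*}
Equating these for every $A$, the inner integrands agree $\RefM_\Z$-a.e., i.e.\ for each fixed $B\in\B(\U)$,
\begin{equation*}
\bigl[T_\U(T_\Y(z),\cdot)\#\RefM(\cdot\mid z)\bigr](B)=\TargetM\bigl(B\mid T_\Y(z)\bigr)\quad \text{for }\RefM_\Z\text{-a.e.\ }z.
\end{equation*}

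The final step is to upgrade this pointwise-in-$B$ identity to an identity of measures for $\RefM_\Z$-a.e.\ $z$. Since $\U$ is Polish, $\B(\U)$ is countably generated by an algebra $\{B_k\}$ closed under finite intersections; discarding a countable union of $\RefM_\Z$-null sets (one per $B_k$), the equality above holds simultaneously for all $B_k$ on a set of full $\RefM_\Z$-measure, and both sides, being Borel probability measures on $\U$, must then coincide on all of $\B(\U)$ by a monotone class argument. This yields $T_\U(z,\cdot)\#\RefM(\cdot\mid z)=\TargetM(\cdot\mid T_\Y(z))$ for $\RefM_\Z$-a.e.\ $z$, which is the claim (modulo the apparent typo $\RefM_\Y\!\to\!\RefM_\Z$ in the statement). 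I expect the main subtlety to be verifying joint measurability of $z\mapsto[T_\U(T_\Y(z),\cdot)\#\RefM(\cdot\mid z)](B)$ so that the key integral identity makes sense; this follows from the measurability of the disintegration kernel $z\mapsto \RefM(\cdot\mid z)$ and the Borel measurability of $T_\Y, T_\U$, so no serious obstacle arises beyond bookkeeping.
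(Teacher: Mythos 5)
Your plan is natural and the first two computational steps (marginal relation, expansion on product sets) are correct, but there is a genuine gap at the step \emph{``Equating these for every $A$, the inner integrands agree $\RefM_\Z$-a.e.''} What you actually derive is
\begin{equation*}
\int_\Z \mathbf{1}_A\bigl(T_\Y(z)\bigr)\, f(z)\,\RefM_\Z(\dd z) = \int_\Z \mathbf{1}_A\bigl(T_\Y(z)\bigr)\, g(z)\,\RefM_\Z(\dd z)\quad\text{for all }A\in\B(\Y),
\end{equation*}
with $f(z)=\bigl[T_\U(T_\Y(z),\cdot)\#\RefM(\cdot\mid z)\bigr](B)$ and $g(z)=\TargetM(B\mid T_\Y(z))$. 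The test functions $\mathbf{1}_A\circ T_\Y$ only generate the sub-$\sigma$-algebra $\sigma(T_\Y)\subset\B(\Z)$, so this identity only yields $\EE_{\RefM_\Z}[f-g\mid\sigma(T_\Y)]=0$, i.e.\ $\EE[f\mid\sigma(T_\Y)]=g$ (using that $g$ is $\sigma(T_\Y)$-measurable). To upgrade this to $f=g$ $\RefM_\Z$-a.e.\ you need $f$ itself to be $\sigma(T_\Y)$-measurable, i.e.\ you need $\RefM(\cdot\mid z)$ to depend on $z$ only through $T_\Y(z)$. That holds when $\RefM=\RefM_\Z\otimes\RefM_\V$ (the case the paper always uses, cf.\ \Cref{subsec:main-contributions}), since then $\RefM(\cdot\mid z)\equiv\RefM_\V$ is constant, but it does not hold for a general $\RefM\in\PP(\Z\times\V)$.

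This is not a mere bookkeeping issue: without such a hypothesis the conclusion is false. Take $\Z=\Y=\U=\V=\R$, $T_\Y\equiv 0$, $T_\U(y,v)=v$, and let $\RefM$ be the uniform distribution on the diagonal of $[0,1]^2$, so $\RefM(\cdot\mid z)=\delta_z$. Then $\TargetM=T\#\RefM=\delta_0\otimes U[0,1]$ and $\TargetM(\cdot\mid T_\Y(z))=\TargetM(\cdot\mid 0)=U[0,1]$, while $T_\U(T_\Y(z),\cdot)\#\RefM(\cdot\mid z)=\delta_z$; these disagree for a.e.\ $z$. So your argument, as written, proves a false statement — the proposition requires an independence assumption (e.g.\ $\RefM(\cdot\mid z)$ independent of $z$, as assumed in the cited source \cite{MGAN} and used throughout the paper), and the place where that assumption must enter is precisely the step you elided. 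Under that hypothesis your computation and the subsequent monotone-class/separability argument go through correctly.
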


\subsection{Basics of OT}\label{subsec:OTreview}
Here we collect some preliminary results on OT as they 
pertain to our analysis in \Cref{sec:conditionalOT,sec:application}
and refer the reader to \cite{villani-OT,ambrosio2005gradient,santambrogio2015optimal,figalli2021invitation, peyre2019computational} for a 
details.

Let $\X, \W$ be Polish spaces and consider Borel probability measures $\RefM \in \PP(\X)$
and $\TargetM \in \PP(\W)$. We define $\Pi(\RefM, \TargetM) := \{ \pi \in \PP( \X \times \W) \: : \: \pi_\X = \RefM, \: \pi_\W = \TargetM \}$ as the space of {\it couplings or transference 
plans}
between $\RefM$ and $\TargetM$. Given a cost $c: \X \times \W \to (- \infty, \infty]$\footnote{Throughout the 
article we only consider real valued costs.} 
we consider the Kantorovich problem, 
\begin{equation}\label{standard-Kantorovich}
    \inf_{\pi \in \Pi(\RefM, \TargetM) } K(\pi), \qquad K(\pi) = \int_{\X \times \W} c(x, w) 
   \dd \pi (x, w),
\end{equation}
and the corresponding Monge problem, 
\begin{equation}\label{standard-Monge}
    \inf_{\substack{ T \# \RefM = \TargetM }} M(T), \qquad M(T) = \int_\X c(x, T(x))  \dd \RefM( x).
\end{equation}
We say that a coupling $\pi^\star$ is optimal if it 
solves \eqref{standard-Kantorovich} and similarly say that a map 
 $T^\star$
is {\it optimal} if it solves \eqref{standard-Monge}.
The existence of optimal couplings for the Kantorovich problem
can be established under very general conditions: 

\begin{proposition}[{\cite[Thm.~4.1]{villani-OT}}] 
\label{prop:standard-Kantorovich-solvability}
Let $\X, \W$ be Polish spaces and let $c$ be a lower semi-continuous 
function such that $\inf c > - \infty$ \footnote{This assumption can be slightly 
generalized to $c(x,w) \ge a(x) + b(w)$ for upper semi-continuous functions 
$a\in L^1_\RefM(\X), b \in L^1_\TargetM(\W)$ as 
is done in classic texts.}. Then there exists a minimizer of \eqref{standard-Kantorovich}.
\end{proposition}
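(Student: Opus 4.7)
The plan is to apply the direct method of the calculus of variations, since this is the classical Kantorovich existence theorem. I would organize the proof around three ingredients: non-emptiness of the feasible set, tightness of the admissible couplings, and lower semi-continuity of the functional $K$ under narrow (weak) convergence of measures.

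First I would note that $\Pi(\eta,\nu)$ is non-empty, since the product coupling $\eta\otimes\nu$ belongs to it. Combined with the assumption $\inf c>-\infty$, the functional $K$ is bounded below on $\Pi(\eta,\nu)$, so we may fix a minimizing sequence $\{\pi_n\}\subset\Pi(\eta,\nu)$ with $K(\pi_n)\downarrow \inf K$. If this infimum is $+\infty$ the statement is trivial, so assume it is finite.

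Next I would establish tightness of $\{\pi_n\}$ and extract a cluster point. Since $\X$ and $\W$ are Polish, the single Borel measures $\eta$ on $\X$ and $\nu$ on $\W$ are tight by Ulam's theorem: for any $\varepsilon>0$ pick compacts $K_\X\subset\X$ and $K_\W\subset\W$ with $\eta(\X\setminus K_\X)<\varepsilon/2$ and $\nu(\W\setminus K_\W)<\varepsilon/2$. Then $K_\X\times K_\W$ is compact in $\X\times\W$ and every $\pi\in\Pi(\eta,\nu)$ satisfies $\pi((\X\times\W)\setminus(K_\X\times K_\W))\le\varepsilon$. Hence $\Pi(\eta,\nu)$ is uniformly tight, so by Prokhorov's theorem $\{\pi_n\}$ has a narrowly convergent subsequence, still denoted $\pi_n\rightharpoonup\pi^\star$, for some $\pi^\star\in\PP(\X\times\W)$. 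Testing the marginal constraint against $f\in C_b(\X)$ and $g\in C_b(\W)$ gives $\int f\,d\pi^\star_\X=\int f\,d\eta$ and similarly for $\nu$, so $\pi^\star\in\Pi(\eta,\nu)$.

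The main obstacle is the last step: showing $K(\pi^\star)\le\liminf_n K(\pi_n)$ when $c$ is merely lower semi-continuous and possibly unbounded above. The standard trick is to approximate $c$ from below by an increasing sequence of bounded continuous functions $c_k\in C_b(\X\times\W)$ with $c_k\uparrow c$ pointwise; such a sequence exists because on a metric space any lower semi-continuous function bounded below is the pointwise supremum of an increasing sequence of bounded continuous ones (Baire's theorem / inf-convolution with a Lipschitz modulus). For each fixed $k$, narrow convergence gives $\int c_k\,d\pi^\star=\lim_n\int c_k\,d\pi_n\le\liminf_n K(\pi_n)$. Letting $k\to\infty$ and applying the monotone convergence theorem on the left yields $K(\pi^\star)\le\liminf_n K(\pi_n)=\inf_{\Pi(\eta,\nu)}K$, so $\pi^\star$ is a minimizer. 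The other steps are routine; this lsc passage is the only subtle point, and the normalization $c\ge\inf c>-\infty$ is precisely what allows the monotone approximation to go through without integrability headaches.
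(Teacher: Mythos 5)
Your proof is correct and follows the same direct-method strategy as the cited reference (the paper itself does not reprove this; it cites Villani, Thm.~4.1, whose proof proceeds exactly via Ulam tightness of $\Pi(\eta,\nu)$, Prokhorov compactness, stability of marginals under narrow limits, and lower semicontinuity of $\pi\mapsto\int c\,\dd\pi$ obtained by monotone approximation of $c$ by bounded continuous functions). The one subtlety you flag -- that $\inf c>-\infty$ is what makes the monotone convergence step clean after the normalization $c\mapsto c-\inf c$ -- is indeed the crux, and you handle it correctly.
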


Next we recall a technical result on measurable selections of couplings 
that will help us in our analysis of 
conditional Kantorovich problems \eqref{conditional-Kantorovich}, allowing us to build optimal couplings between 
measures from optimal couplings between their conditionals.

\begin{proposition}[{\cite[Cor.~5.22]{villani-OT}}]\label{measurable_selection}
    Suppose $\X,\W$ are Polish spaces and $c$ is
a continuous cost function such that $\inf c>-\infty$. Let $\Omega$ be
a measurable space and let $\omega \mapsto \left(\RefM_{\omega},\TargetM_{\omega}\right)$
be a measurable function from $\Omega$ to $\PP(\X)\times \PP(\W)$. Then there exists 
a measurable selection $\omega\to\pi_{\omega}$ such that
$\pi_{\omega} \in \Pi(\RefM_{\omega}, \TargetM_\omega)$ is optimal, i.e. $\pi_\omega$ solves \eqref{standard-Kantorovich} with $\eta_\omega$ and $\nu_\omega$ as marginals,  for every $\omega \in \Omega$.
\end{proposition}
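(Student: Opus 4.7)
The plan is to invoke the Kuratowski--Ryll-Nardzewski measurable selection theorem applied to the multifunction $\omega \mapsto O(\omega)$, where $O(\omega) \subset \PP(\X \times \W)$ denotes the set of optimal couplings for the Kantorovich problem \eqref{standard-Kantorovich} with marginals $(\RefM_\omega, \TargetM_\omega)$. A convenient fact is that since $\X, \W$ are Polish, the space $\PP(\X \times \W)$ endowed with the narrow topology is itself Polish, giving exactly the framework in which the Kuratowski--Ryll-Nardzewski theorem applies.

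First I would verify that the values of the multifunction are nonempty and closed. Nonemptiness of $O(\omega)$ for every $\omega$ follows immediately from \Cref{prop:standard-Kantorovich-solvability}, since $c$ is lower semicontinuous (in fact continuous) and bounded below. Closedness in the narrow topology follows from two ingredients: lower semicontinuity of $\pi \mapsto \int c \, \dd \pi$ under narrow convergence (a consequence of $c$ being lsc and bounded below via the Portmanteau theorem) together with closedness of $\Pi(\RefM_\omega, \TargetM_\omega)$ under narrow convergence (the marginal constraints are preserved in the limit).

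The heart of the proof is establishing that $\omega \mapsto O(\omega)$ is weakly measurable, i.e., that $\{\omega : O(\omega) \cap U \neq \emptyset\}$ is measurable for every open $U \subset \PP(\X\times\W)$. I would accomplish this in two stages. Stage (a): show that the feasibility map $\omega \mapsto \Pi(\RefM_\omega, \TargetM_\omega)$ is measurable as a set-valued function, using the assumed measurability of $\omega \mapsto (\RefM_\omega, \TargetM_\omega)$ into the product of Polish spaces $\PP(\X) \times \PP(\W)$ and the fact that $\Pi$ is cut out by continuous projection constraints; the independent coupling $\RefM_\omega \otimes \TargetM_\omega$ provides an explicit measurable selector into $\Pi(\RefM_\omega, \TargetM_\omega)$ that certifies nonemptiness uniformly. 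Stage (b): show that the value function $V(\omega) := \inf_{\pi \in \Pi(\RefM_\omega, \TargetM_\omega)} \int c \, \dd \pi$ is measurable, either via a measurable version of Berge's maximum theorem or directly by writing $V$ as a countable infimum over a measurable dense selection of $\Pi$ (using separability of $\PP(\X\times\W)$). Combining (a) and (b), the argmin $O(\omega) = \{ \pi \in \Pi(\RefM_\omega, \TargetM_\omega) : \int c \, \dd\pi \leq V(\omega) \}$ is carved out by a closed measurable condition from a measurable set-valued family, hence measurable.

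The hard part will be stage (a): verifying in detail that $\omega \mapsto \Pi(\RefM_\omega, \TargetM_\omega)$ is measurable as a multifunction with values in the closed subsets of $\PP(\X\times\W)$. The subtlety is that while $\Pi$ depends continuously on $(\RefM, \TargetM)$ in a Hausdorff-like sense when the marginals are tight, proving genuine measurability in the abstract $\omega$-parametrized setting requires a careful application of a Castaing representation or equivalent disintegration argument. Once this is settled, Kuratowski--Ryll-Nardzewski applied to the nonempty, closed-valued, measurable multifunction $\omega \mapsto O(\omega)$ delivers the desired measurable selection $\omega \mapsto \pi_\omega$ with $\pi_\omega$ optimal for each $\omega$.
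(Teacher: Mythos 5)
The paper does not prove this proposition at all: it is quoted verbatim from Villani, \cite[Cor.~5.22]{villani-OT}, and used as a black box. So there is no ``paper's own proof'' to compare against; what follows is an assessment of your outline on its own merits.

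Your Kuratowski--Ryll-Nardzewski strategy is the textbook route, and the verification of nonemptiness and narrow-closedness of $O(\omega)$ is fine. The issue is stage (a), which you flag as ``the hard part'' and then leave essentially unproved, deferring to ``a careful application of a Castaing representation or equivalent disintegration argument.'' That is precisely where the content of the proposition lies: weak measurability of the constraint multifunction (or of $O$ directly) is not a routine consequence of $\Pi$ being ``cut out by continuous projection constraints,'' and asserting it without a construction is a genuine gap, not a detail. Presenting the independent coupling $\RefM_\omega\otimes\TargetM_\omega$ as a measurable selection certifies nonemptiness, but a single measurable selection does not establish weak measurability of a set-valued map; one needs a dense countable family of measurable selections (a Castaing representation), which you have not produced.

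A cleaner route, and the one effectively taken in Villani, avoids stage (a) altogether. Work on the Polish parameter space $\PP(\X)\times\PP(\W)$ rather than on abstract $\Omega$. Villani's stability theorem for optimal plans (Thm.~5.20 and Cor.~5.21 there) shows that the multifunction $(\mu,\nu)\mapsto O(\mu,\nu)$ has closed graph in $\PP(\X)\times\PP(\W)\times\PP(\X\times\W)$ and nonempty compact values (compactness following from Prokhorov: the marginals are individually tight, hence $\Pi(\mu,\nu)$ is tight, hence the closed subset $O(\mu,\nu)$ is compact). A closed-graph, compact-valued multifunction on a Polish space is Borel measurable, so KRN (or the equivalent Dellacherie selection theorem that Villani cites) applies directly on $\PP(\X)\times\PP(\W)$, yielding a Borel selection $(\mu,\nu)\mapsto\pi_{(\mu,\nu)}$; composing with the given measurable map $\omega\mapsto(\RefM_\omega,\TargetM_\omega)$ finishes the argument. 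This sidesteps both the measurability of the feasibility map $\Pi$ and the intermediate measurability of the value function $V$, and replaces the unexecuted Castaing construction by the stability theorem, which you would in any case need to establish closedness of $O(\omega)$. In short: your outline is headed in the right direction, but stage (a) needs to be either carried out or, better, replaced by the closed-graph/compactness argument.
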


Let us now consider the Monge problem \eqref{standard-Monge} and recall a 
characterization of its solution in the setting where $\W = \X$ are separable Hilbert spaces. 
First, we recall the definition of a (Gaussian) regular measure.  

\begin{definition}\label{def:regular-measure}
    Suppose $\X$ is a separable Hilbert space. We say $B \in \B(\X)$ is a 
    Gaussian null set if $\gamma(B) =0$ for any non-degenerate Gaussian measure $\gamma \in \PP(\X)$ \footnote{Recall that $\gamma$ is said to be a Gaussian measure 
    if $\ell \# \gamma$ is a one-dimensional Gaussian distribution for all $\ell \in \X^\star$
    the dual of $\X$ \cite{bogachev-gaussian}.}.
    Furthermore, we say that a measure $\vrho \in \PP(\X)$ is regular if $\vrho(B) = 0$
    for all Gaussian null sets $B$.
\end{definition}

With the above notion of a regular measure at hand we can present the 
following result regarding the existence of unique optimal solutions to 
\eqref{standard-Monge} in relation to \eqref{standard-Kantorovich}.

\begin{proposition}[{\cite[Thm.~6.2.10]{ambrosio2005gradient}}]\label{ambrosio_hilbert_space}
Let $\X$ be a separable Hilbert space and 
consider the cost $c\left(u,v) \right)= \| u-v\|^p_\X$
for $p\in\left(1,\infty\right)$. Suppose  $\RefM, \TargetM \in \PP(\X)$ are both measures
with finite $p$-moments, and assume $\RefM$
is regular according to \Cref{def:regular-measure}. 
Then it holds that: 
\begin{enumerate}[label=(\roman*)]
    \item Problem \eqref{standard-Kantorovich} has a unique solution $\pi^\star \in \Pi(\RefM, \TargetM)$, 
    such that $\pi^\star=\left(I\times T^\star\right)\#\RefM$ for an optimal
    map $T^\star \in L_\eta^{p}(\X; \X)$ that solves \eqref{standard-Monge}.

    \item If $\nu$ is also regular then $T^\star$ is  injective $\eta$-a.e. 


\end{enumerate}
\end{proposition}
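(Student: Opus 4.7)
The plan is to adapt the classical Brenier--Gangbo--McCann program to the separable Hilbert setting. First, existence of at least one optimal coupling $\pi^\star\in\Pi(\eta,\nu)$ follows directly from \Cref{prop:standard-Kantorovich-solvability}, since $c(u,v)=\|u-v\|_\X^p$ is continuous and non-negative and the finite $p$-moment hypotheses on $\eta,\nu$ keep the infimum in \eqref{standard-Kantorovich} finite. Any such $\pi^\star$ admits a pair of $c$-conjugate Kantorovich potentials $(\phi,\psi)$ and is concentrated on the $c$-cyclically monotone set where $\phi(u)+\psi(v)=c(u,v)$.

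Next I would exploit strict convexity of $r\mapsto\|r\|_\X^p$ for $p\in(1,\infty)$: at every point $u$ where $\phi$ is Gateaux differentiable, the $c$-superdifferential $\partial^c\phi(u)$ is a singleton, so the fiber $\{v:(u,v)\in\spt\pi^\star\}$ reduces to a single point
\begin{equation*}
    T^\star(u) = u - J_q\bigl(\nabla\phi(u)\bigr), \qquad q = \tfrac{p}{p-1},
\end{equation*}
where $J_q$ denotes the duality map associated with the exponent $q$. On the full-measure subset where $\phi$ is differentiable this yields the graph representation $\pi^\star=(\Id\times T^\star)\#\eta$, and the map lies in $L^p_\eta(\X;\X)$ by the finite $p$-moment assumption on $\nu$.

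The principal obstacle, and the step I expect to be the hardest, is establishing that the non-differentiability set of $\phi$ is $\eta$-negligible. Rademacher's theorem is unavailable in infinite dimensions, so one must appeal to the Aronszajn--Preiss--Zaj\'i\v{c}ek theory: a $c$-concave function on a separable Hilbert space, viewed locally as the difference of a concave function and a smooth perturbation coming from the $c$-transform, fails to be Gateaux differentiable only on a Gaussian null set in the sense of \Cref{def:regular-measure}. Regularity of $\eta$ then renders this set $\eta$-negligible, giving the $\eta$-a.e.\ differentiability needed to extract $T^\star$.

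Uniqueness of $\pi^\star$ follows by a standard convexity argument: if two distinct optimal plans existed, their midpoint would also be optimal and hence, by the argument above, concentrated on the graph of a single-valued map, which forces the two plans to coincide. For part (ii), regularity of $\nu$ lets us apply the same reasoning with the roles of $\eta$ and $\nu$ swapped, producing an optimal transport map $S^\star$ from $\nu$ to $\eta$; the identity $S^\star\circ T^\star=\Id$ $\eta$-a.e.\ then follows from uniqueness of the Monge solution, and this relation immediately forces $T^\star$ to be $\eta$-a.e.\ injective.
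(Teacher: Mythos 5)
The paper does not prove this proposition; it is imported verbatim as Theorem~6.2.10 of Ambrosio--Gigli--Savar\'e, so there is no ``paper proof'' to compare against. That said, your sketch is a faithful reconstruction of the argument that reference actually gives. You correctly identify the one step that genuinely separates the Hilbert-space case from the Euclidean one: Rademacher's theorem must be replaced by the Aronszajn--Preiss--Zaj\'i\v{c}ek fact that a locally Lipschitz (in AGS, one first proves local Lipschitz bounds on the Kantorovich potential from the finite $p$-moment hypothesis) function on a separable Banach space is G\^ateaux differentiable outside a Gaussian null set, and it is exactly the regularity of $\eta$ in the sense of \Cref{def:regular-measure} that makes this exceptional set $\eta$-negligible. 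The remaining ingredients --- existence of an optimal plan and of $c$-conjugate potentials, the first-order condition turning the $c$-superdifferential into a singleton via strict convexity of $r\mapsto\|r\|^p$, the midpoint argument for uniqueness, and the reversal/composition argument $S^\star\circ T^\star=\Id$ for injectivity --- all match the AGS proof. Two small caveats worth recording: your closed-form $T^\star(u)=u-J_q(\nabla\phi(u))$ drops a $p$-dependent multiplicative constant (the correct relation is $u-T^\star(u)=p^{-1/(p-1)}\|\nabla\phi(u)\|^{(2-p)/(p-1)}\nabla\phi(u)$, but this is cosmetic), and the existence and local Lipschitz regularity of the potentials in the unbounded-cost regime is itself nontrivial and is where the finite $p$-moment hypotheses are consumed --- your sketch takes this as given, which is acceptable at this level but should not be glossed over in a full write-up.
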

Here $L^p_\eta( \X; \X)$ denotes the $\eta$-weighted 
$L^p$ space of Bochner integrable maps from $\X$ to itself. Finally we present a 
technical lemma about the convergence of 
OT maps in $L^2$ spaces which may be known by experts but we present the proof for convenience.
\begin{lemma}\label{lem:mapconvergence}
    Let $\X$ be a separable Hilbert space, and let $\RefM,\TargetM \subset \PP(\X)$ have finite second moments. Let $\pi_k = (\Id \times T_k)\# \RefM$ be a sequence of deterministic couplings between $\RefM$ and $\TargetM$, defined by maps $T_k$.
     If the couplings $\pi_k$ converge weakly, that is,  $\pi_k \rightharpoonup \pi$, where $\pi = (\Id \times T) \# \RefM$, then the maps $T_k \to T$ in $L^2_\RefM(\X;\X)$.
\end{lemma}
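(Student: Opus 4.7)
The plan is to expand the squared $L^2$ norm and exploit that both $T_k$ and $T$ push $\eta$ to $\nu$, reducing the problem to the convergence of a single cross-term. Specifically, I would write
\begin{equation*}
\|T_k - T\|^2_{L^2_\eta(\X;\X)} = \int_\X \|T_k\|^2\, \dd\eta + \int_\X\|T\|^2\,\dd\eta - 2\int_\X \langle T_k(x), T(x)\rangle_\X\, \dd\eta(x),
\end{equation*}
and then observe that since $T_k\#\eta = T\#\eta = \nu$, the first two integrals are both equal to the constant $\int_\X\|w\|^2\, \dd\nu(w)$, which is finite by the second moment assumption on $\nu$. Thus the result reduces to showing $\int \langle T_k(x), T(x)\rangle\, \dd\eta(x) \to \int \|T(x)\|^2\, \dd\eta(x)$.

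Rewriting the cross-term in terms of the couplings gives
\begin{equation*}
\int_\X \langle T_k(x), T(x)\rangle_\X\, \dd\eta(x) = \int_{\X \times \X} \langle w, T(x)\rangle_\X\, \dd\pi_k(x,w),
\end{equation*}
and similarly the target value equals $\int \langle w, T(x)\rangle\, \dd\pi(x,w)$. The main obstacle is that the integrand $(x,w) \mapsto \langle w, T(x)\rangle$ is not continuous (since $T$ is only measurable) and is unbounded, so weak convergence of $\pi_k$ cannot be invoked directly. I would handle this by approximation: fix $\varepsilon > 0$ and choose a bounded continuous $T^\varepsilon: \X \to \X$ with $\|T - T^\varepsilon\|_{L^2_\eta} < \varepsilon$, which is possible since bounded continuous $\X$-valued functions are dense in $L^2_\eta(\X;\X)$.

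Then I would split
\begin{equation*}
\int \langle w, T(x)\rangle\, \dd\pi_k = \int \langle w, T^\varepsilon(x)\rangle\, \dd\pi_k + \int \langle w, T(x) - T^\varepsilon(x)\rangle\, \dd\pi_k,
\end{equation*}
and bound the second term using Cauchy--Schwarz by $\left(\int \|w\|^2\, \dd\nu\right)^{1/2} \|T - T^\varepsilon\|_{L^2_\eta} \le C\varepsilon$, with the same bound applying when $\pi_k$ is replaced by $\pi$. For the first term, $(x,w)\mapsto \langle w, T^\varepsilon(x)\rangle$ is jointly continuous and dominated by $\|T^\varepsilon\|_\infty \|w\|$, so the uniform second moment bound $\int \|w\|^2 \, \dd\pi_k = \int \|w\|^2\, \dd\nu$ yields uniform integrability of this family against $\{\pi_k\}$. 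Combined with $\pi_k \rightharpoonup \pi$, the standard extension of weak convergence to uniformly integrable continuous test functions gives $\int \langle w, T^\varepsilon(x)\rangle\, \dd\pi_k \to \int \langle w, T^\varepsilon(x)\rangle\, \dd\pi = \int \langle T(x), T^\varepsilon(x)\rangle\, \dd\eta$. Sending $\varepsilon \to 0$ then yields $\int \langle T_k, T\rangle\, \dd\eta \to \int \|T\|^2\, \dd\eta$, which combined with the opening identity gives $\|T_k - T\|_{L^2_\eta} \to 0$. The anticipated main obstacle is justifying the uniform integrability step cleanly in the infinite-dimensional Hilbert space setting, but this follows because the dominating function $\|w\|$ is $L^2$-bounded uniformly in $k$.
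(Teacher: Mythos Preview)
Your proposal is correct and follows essentially the same line as the paper's proof, with one structural difference worth noting. The paper first establishes \emph{weak} convergence $T_k \rightharpoonup T$ in $L^2_\eta(\X;\X)$ by testing against functions $\langle \psi(x), w\rangle$ for continuous $\psi$ and then extending to all $\psi\in L^2_\eta$ by density and the uniform $L^2$-bound on the $T_k$; it then invokes the Radon--Riesz property (weak convergence plus equality of norms) to upgrade to strong convergence. You instead expand $\|T_k-T\|^2$ directly and only need the single test function $\psi = T$, which is the Radon--Riesz computation done by hand. Your treatment of the unbounded integrand $\langle w, T^\varepsilon(x)\rangle$ via uniform integrability is in fact more careful than the paper's, which passes over this point somewhat quickly when asserting $\int \phi\, \dd\pi_k \to \int \phi\, \dd\pi$ for the unbounded test function $\phi(x,w)=\langle \psi(x),w\rangle$. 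Both arguments are valid; the paper's is slightly more conceptual, yours slightly more self-contained.
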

\begin{proof}
    Note that $\|T_k\|^2_{L^2_{\RefM}(\X; \X)} = 
    \|T\|^2_{L^2_{\RefM}(\X; \X)} = \int_\X\|y\|^2\TargetM(\dd y)$ for all $k$
    where the second equality follows from the fact that $T\#\RefM = \TargetM$ and 
    similarly for $T_k$. 
    Since $\pi_k\rightharpoonup\pi$ then  
    $\lim_{k\to\infty} \int \phi \: \dd\pi_k = \int \phi \: \dd\pi$
    for all continuous functions $\phi:\X\times\X\to \mathbb{R}$. We then have that 
    \[
    \int_{\X\times\X}\phi(x,x')\dd \pi_k( x,  x') = 
    \int_{\X\times\X}\phi(x,x') \dd (\Id \times T_k)\#\eta( x, x')
    =\int_{\X\times\X}\phi(x,T_k(x)) \dd \RefM(x).
    \]
    Repeating a similar calculation for $\pi$ yields that 
    \[
    \lim_{k\to\infty} \int_{\X\times\X}\phi(x,T_k(x)) \dd \eta( x) = 
    \int_{\X\times\X}\phi(x,T(x)) \dd \eta(x).
    \]
    Let $\phi(x,y)=\langle\psi(x),y\rangle$ for a continuous function $\psi:\X\to\X$. Then we see that
    $\lim_{k\to\infty} \int_X\langle\psi(x),T_k(x)\rangle\dd\eta(x) = 
    \int_X\langle\psi(x),T(x)\rangle\dd\eta(x)$.
    Thus, since the continuous functions on $\X$ are  dense in $L^2_\RefM(\X;\X)$
    \footnote{As a consequence of the density of simple functions in $L^2_\RefM(\X; \X)$ and the fact that simple functions on separable Hilbert spaces can be approximated by continuous 
    functions by Urysohn's theorem \cite[Page 7]{Yosida1965}.}
    , and the sequence 
    $(T_k)$ is bounded, the equality above is extended to all functions $\psi\in L^2_\RefM(\X;\X)$, and we see that $T_k \rightharpoonup T$. Finally, we apply the Radon-Riesz property in Hilbert spaces to conclude that weak convergence combined with equality (convergence) of the norms implies strong convergence in $L^2_\RefM(\X; \X)$. 
\end{proof}

\section{General theory of conditional OT}\label{sec:conditionalOT}

In this section we present our main theoretical results 
in the form of propositions and theorems that amount 
to  the extended
versions of \Cref{main-thm:conditional-Kantorovich-solvability,main-thm:conditional-duality,main-theorem:conditional-monge}. 
We study the solvability of the conditional Kantorovich problem 
in \Cref{subsec:conditionalKantorovich}; present a strong 
duality result in \Cref{subsec:conditionalDual}; and investigate the 
solvability of the conditional Monge problem in \Cref{subsec:conditionalMonge}. 
We further  characterize the 
conditional Kantorovich problem as the limit of  unconstrained  problems with a perturbed cost in 
\Cref{subsec:relaxedProblem} followed by an extension  
to  broader classes of triangular maps in 
\Cref{subsec:generic-triangular-OT}.

Throughout this section (except for \Cref{subsec:generic-triangular-OT}) 
we will consider a reference measure $\RefM \in \PP( \Y \times \V)$ 
and a target $\TargetM \in \PP( \Y \times \U)$ for 
function spaces $\Y, \V, \U$ which are either taken to be 
Polish or separable Hilbert spaces. We further make the following 
assumption regarding the $\Y$-marginals of these two measures. 
\begin{assumption}\label{assump:y-marginals-match}
The reference $\eta \in \PP(\Y \times \V)$ and target $\nu \in \PP(\Y \times \U)$ 
satisfy $\RefM_\Y = \TargetM_\Y$.
\end{assumption}
Under this assumption we can take the map $T_\Y = \Id$ in \Cref{def:triangular-map} and work with the constrained set of 
couplings $\Pi_\Y$ from \eqref{conditional-Kantorovich}
which we recall for convenience:
\begin{equation}\label{conditional-coupling-Id}
    \Pi_\Y(\RefM; \TargetM) = \left\{ \pi \in \Pi(\RefM; \TargetM) \: :  \: 
    (z, v; y, u) \sim \pi \text{  such that  } y = z 
    \right\}\footnote{Note that this set is empty whenever \Cref{assump:y-marginals-match} is violated.}.
\end{equation}
By definition, any  $\pi\in\Pi_{\Y}\left(\RefM; \TargetM\right)$
 can be written as 
\begin{equation}\label{eq:pi-expansion}
\pi(z,v,y,u)
= \pi(v,u \mid y, z) \pi(y \mid z) \pi(z) 
= \widetilde{\pi}(v,u \mid y)\delta(y-z)\RefM_{\Y}(z),
\end{equation}
as these couplings are 
concentrated on the set $\{y=z\}$. 
We now verify that the conditional measure  
$\wt{\pi}(v,u \mid y)$ is a coupling  of  
$\RefM( \cdot \mid y)$ and $\TargetM( \cdot \mid y)$ a. e. 
as a particular application of the following results by taking $T_\Y = \Id$.

\begin{proposition}\label{prop:Conditional Coupling Conditioning}
Let $T_\Y: \Y \to \Y$ be an invertible map
such that $T_\Y \# \eta_\Y = \nu_\Y$ and consider the class of couplings 
\begin{equation*}
    \Pi( \RefM, \TargetM; T_\Y):= \{ \pi \in \Pi(\RefM, \TargetM) : 
    (z, v; y, u) \sim \pi \text{ such that } y = T_\Y(z)  \} .
\end{equation*}
Then every coupling $\pi \in \Pi(\RefM, \TargetM; T_\Y)$ can be written as 
$\pi(z, v, y, u) = \wt{\pi}( v,u \mid y) \delta( y - T_\Y(z)) \RefM_\Y(z)$
where  $\widetilde{\pi}(v,u\mid y) \in \Pi \left( \RefM(v\mid T_{\Y}^{-1}(y)),  \TargetM(u\mid y)  \right)$ for  $\nu_{\Y}$-a.e.  $y$. 
\end{proposition}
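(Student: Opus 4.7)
The plan is to apply the disintegration theorem (\Cref{conditional_measure_prop}) to $\pi$ along the projection $(z,v,y,u) \mapsto (z,y)$, use the constraint that $\pi$ concentrates on the graph $\{y = T_\Y(z)\}$ to reduce the conditioning to a single $y$-variable, and then match the resulting $(v,u)$-conditionals against the disintegrations of $\eta$ and $\nu$.

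First I would identify the $(z,y)$-marginal of $\pi$. Marginalizing $\pi \in \Pi(\eta,\nu)$ in $(v,u)$ gives a measure on $\Y \times \Y$ whose first and second marginals are $\eta_\Y$ and $\nu_\Y = T_\Y\#\eta_\Y$ respectively; since $\pi$ is supported on $\{y = T_\Y(z)\}$, this joint marginal is the pushforward of $\eta_\Y$ under $z \mapsto (z, T_\Y(z))$, which in the shorthand used in the statement reads $\pi_{Z,Y}(\mathrm{d}z,\mathrm{d}y) = \delta(y - T_\Y(z))\,\eta_\Y(\mathrm{d}z)$. Applying \Cref{conditional_measure_prop} to $\pi$ with respect to $\pi_{Z,Y}$ then yields conditionals $\widetilde\pi(\mathrm{d}v,\mathrm{d}u \mid z, y)$ defined for $\pi_{Z,Y}$-a.e. pair, i.e.\ on the graph of $T_\Y$; invertibility of $T_\Y$ permits parametrizing the graph bijectively and Borel-measurably by $y \in \Y$, so after this reparametrization the conditioning collapses onto $y$ alone, giving the product form stated.

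Next I would verify that $\widetilde\pi(\cdot,\cdot\mid y)$ is indeed a coupling of $\eta(\cdot\mid T_\Y^{-1}(y))$ and $\nu(\cdot\mid y)$. The $(z,v)$-marginal of $\pi$ equals $\eta$, so integrating $u$ out of the product factorization and comparing with the disintegration of $\eta$ against $\eta_\Y$ (essentially unique by \Cref{conditional_measure_prop}) identifies $\int_\U \widetilde\pi(\mathrm{d}v,\mathrm{d}u\mid y) = \eta(\mathrm{d}v \mid T_\Y^{-1}(y))$ for $\nu_\Y$-a.e.\ $y$; a symmetric calculation using $\pi_{\Y \times \U} = \nu$ yields $\int_\V \widetilde\pi(\mathrm{d}v,\mathrm{d}u\mid y) = \nu(\mathrm{d}u \mid y)$, completing the argument.

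The main technical point is making the Dirac-on-the-graph shorthand rigorous on possibly infinite-dimensional Polish spaces, where one cannot work with densities. Invertibility of $T_\Y$ is the crucial hypothesis: it lets the graph be identified with $\Y$ via either coordinate projection, so the disintegration of $\pi$ against the constrained pair $(z,y)$ reduces cleanly to the single-variable disintegration covered by \Cref{conditional_measure_prop}. This identification, together with the essential-uniqueness clause in that proposition, is what delivers the $\nu_\Y$-almost-everywhere qualifier in the conclusion; without invertibility one would have to retain both coordinates throughout and the statement itself would need revision.
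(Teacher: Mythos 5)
Your proposal is correct and follows essentially the same route as the paper: disintegrate $\pi$ along the $(z,y)$ variables, use the support constraint $y = T_\Y(z)$ (with invertibility) to collapse the conditioning onto $y$ alone, and then identify the $\V$- and $\U$-marginals of $\widetilde\pi(\cdot,\cdot\mid y)$ by integrating out $(y,u)$ and $(z,v)$ respectively and matching against the disintegrations of $\eta$ and $\nu$. Your version is a touch more explicit about why the abstract disintegration theorem applies here (bijective parametrization of the graph), but the computation and structure are the same as the paper's.
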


\begin{proof}
The expression of $\pi$ in terms of  $\wt{\pi}$ follows from the expansion $\pi(z,v,y,u) = \pi(v,u|y,z) \pi(y|z)\pi(z)$ and the constraint that $y=T_\Y(z)$ almost surely with respect to $\pi$. In order to show the $\V$ marginal of $\widetilde{\pi}(v,u\mid y)$ is $\RefM(v\mid T_{\Y}^{-1}(y))$, note that 
\begin{equation*}
\int_\Y \int_\U \pi(z,v, \dd y,\dd u)=\RefM(z,v)= \RefM(v \mid z) \RefM_\Y(z),
\end{equation*}
because $\pi$ is a coupling of $\RefM$ and $\TargetM$. Moreover, the expansion of $\pi$ in terms of $\wt{\pi}$ leads to 
\begin{equation*}
\begin{aligned}
\int_\Y \int_\U \pi(z,v, \dd y,\dd u)
& = \int_\Y \int_\U  \wt{\pi}( v, \dd u \mid y) \delta(\dd y - T_{\Y}(z)) \eta_\Y (z) =  \eta_\Y (z)  \int_\U  \wt{\pi}( v, \dd u \mid T_\Y(z)).
\end{aligned}
\end{equation*}
Hence, we have shown that 
$\int_{\U}\wt{\pi}(v, \dd u \mid T_\Y(z))=\RefM(v\mid  z)$ for a.e. $z$. Replacing $z$ with $T_\Y^{-1}(y)$ yields the conclusion
$    \int_{\U}\wt{\pi}(v, \dd u \mid y)=\RefM(v\mid  T_\Y^{-1}(y))$ for a.e. $y$. 
The proof is completed by following the same procedure to show that the $\V$ marginal of $\wt{\pi}(v,u\mid y)$ is $\TargetM(u\mid y)$. In particular, 
$\int_\Y \int_\V \pi(\dd z, \dd v, y, u)=\TargetM(y,u)=\TargetM(u \mid y)\TargetM_\Y(y)$
and
\begin{equation*}
\begin{aligned}
\int_\Y \int_\V \pi(\dd z, \dd v, y, u)
& = \int_\Y \int_\V  \wt{\pi}( \dd v,  u \mid y) \delta(y - {T_\Y( z)}) \eta_\Y (\dd z) \\
&= \int_\Y \int_\V  \wt{\pi}( \dd v,  u \mid y) \delta(y -w) T_\Y \#\eta_\Y (\dd w) \\
&= \int_\Y \int_\V  \wt{\pi}( \dd v,  u \mid y) \delta(y -w) \nu_\Y (\dd w) =   \nu_\Y (y) \int_\U  \wt{\pi}( \dd v,  u \mid y)   
\end{aligned}
\end{equation*}
concluding
$    \int_{\V}\wt{\pi}(\dd v, u \mid y)=\TargetM(u\mid y)$ for a.e. $y$.
{}
\end{proof}

We interpret the above result as 
the coupling analog of 
 \Cref{prop:triangular-transport-from-MGAN}, stating that the constraint $y = T_\Y(z)$
automatically enforces $\pi \in \Pi_\Y$ to couple the conditionals 
$\RefM( \cdot \mid z)$ and $\TargetM(\cdot \mid y)$ in an analogous manner 
in which the $\U$-component $T_\U$ of a triangular transport map $T$
that pushes $\RefM$ to $\TargetM$ will satisfy $T_\U(y , \cdot ) \# 
\RefM(\cdot \mid T_\Y^{-1}(y)) = \TargetM( \cdot \mid y)$. 
For most of our results in this section we apply \Cref{prop:Conditional Coupling Conditioning} with $T_\Y = \Id$ under \Cref{assump:y-marginals-match} but 
later use this result to obtain more general triangular OT maps.

\subsection{Solvability of the conditional Kantorovich Problem}\label{subsec:conditionalKantorovich}
We now consider the  conditional Kantorovich problem \eqref{conditional-Kantorovich}. Let us  recall this problem for convenience:
\begin{equation}\label{eq:Constrained Kantorovich}
\inf_{\pi\in\Pi_{\Y}  \left(\RefM,\TargetM\right)} K(\pi).
\end{equation}
It is also helpful for us to equivalently write this problem 
as an unconstrained Kantorovich problem with a modified cost function:
\begin{equation}\label{eq:kantorovich-unconstrained}
\left\{
\begin{aligned}
& \inf_{\pi\in\Pi\left(\RefM,\TargetM\right)}  \int c_\chi(z, v; y, u) \: \dd \pi( z, v, y, u), \\ 
& c_\chi(z,v ; y, u):= c(z, v; y, u) + \chi(y,z), \qquad 
     \chi(y,z) := 
 \begin{cases}
0 & y=z,\\
+\infty & \text{otherwise}.
\end{cases}
\end{aligned}
\right.
\end{equation}
The minimizers and minimum of these problems coincide, and the two
problems are completely equivalent under \cref{assump:y-marginals-match}. 
We are now ready to present our main result concerning the solvability 
of \eqref{eq:Constrained Kantorovich} and the characterization 
of its minimizers as optimal couplings between the conditionals $\RefM(\cdot \mid y)$ and $\TargetM(\cdot \mid y)$. Our main  \Cref{main-thm:conditional-Kantorovich-solvability} also follows from this result.

\begin{proposition}\label{prop:conditional_kantorovich}
 Let $\V, \U, \Y$ be Polish spaces and consider  a cost function 
 $c: (\Y \times \V) \times (\Y \times \U) \to \R$ that is 
 continuous and $\inf c > - \infty$.
Suppose \Cref{assump:y-marginals-match} holds and that 
the infimum in \eqref{eq:Constrained Kantorovich}
 is finite.
 Then it holds that: 

 \begin{enumerate}[label=(\roman*)]
     \item  Problem \eqref{eq:Constrained Kantorovich} admits a minimizer $\pi^\star \in \PP( \Y \times \V \times \Y \times \U)$.

\item  $\pi^\star\left(z,v,u \mid y\right) =\widetilde{\pi}^\star(v,u \mid y) \delta(y - z)$ where 
for $\eta_\Y$ a.e. $y$, $\widetilde{\pi}^\star(v, u \mid y)$
is optimal for 
\begin{equation}\label{pi-y-def}
\inf_{\pi \in\Pi\left(\RefM\left(v\mid y\right),\TargetM(u\mid y)\right)}
\int_{\V \times \U} c^y(v,u) \: \dd \pi \left( v,  u\right), \qquad 
c^y(v,u) = c(y, v; y, u).
\end{equation}
 \end{enumerate}
 

\end{proposition}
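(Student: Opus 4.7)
My plan is to split the proof into two parts: (i) existence, via the unconstrained reformulation \eqref{eq:kantorovich-unconstrained}, and (ii) the conditional characterization, via the decomposition from \Cref{prop:Conditional Coupling Conditioning} together with a measurable selection argument. Because the reformulation already encodes the constraint $y=z$ into the cost, part (i) will follow from a standard OT existence theorem applied to a modified cost, while most of the work lies in part (ii).

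For (i), I would apply \Cref{prop:standard-Kantorovich-solvability} to the unconstrained problem \eqref{eq:kantorovich-unconstrained} with cost $c_\chi = c + \chi$. The penalty $\chi(y,z)$ is lower semi-continuous since its sublevel sets are either empty (for negative thresholds) or the closed diagonal $\{y=z\}\subset \Y\times\Y$; combined with the continuity of $c$ and $\inf c>-\infty$, this makes $c_\chi$ lower semi-continuous with $\inf c_\chi = \inf c > -\infty$. The hypothesis that the infimum in \eqref{eq:Constrained Kantorovich} is finite supplies a feasible coupling in $\Pi(\eta,\nu)$ with finite $c_\chi$-integral, so \Cref{prop:standard-Kantorovich-solvability} yields a minimizer $\pi^\star\in\Pi(\eta,\nu)$ of \eqref{eq:kantorovich-unconstrained}. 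Finiteness of $\int c_\chi\,\dd\pi^\star$ forces $\pi^\star\{y\neq z\}=0$, so $\pi^\star\in\Pi_\Y(\eta,\nu)$; the equivalence of the two problems under \Cref{assump:y-marginals-match} makes $\pi^\star$ a minimizer of \eqref{eq:Constrained Kantorovich} as well.

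For (ii), \Cref{assump:y-marginals-match} allows me to invoke \Cref{prop:Conditional Coupling Conditioning} with $T_\Y=\Id$, producing the decomposition $\pi^\star(z,v,y,u)=\wt{\pi}^\star(v,u\mid y)\,\delta(y-z)\,\eta_\Y(z)$ with $\wt{\pi}^\star(\cdot,\cdot\mid y)\in\Pi(\eta(\cdot\mid y),\nu(\cdot\mid y))$ for $\eta_\Y$-a.e.\ $y$. Fubini then gives $K(\pi^\star)=\int_\Y \bigl[\int_{\V\times\U} c^y(v,u)\,\wt{\pi}^\star(\dd v,\dd u\mid y)\bigr]\,\eta_\Y(\dd y)$. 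I would argue by contradiction: if $\wt{\pi}^\star(\cdot,\cdot\mid y)$ were suboptimal for \eqref{pi-y-def} on a measurable set $A\subset\Y$ with $\eta_\Y(A)>0$, a parametrized version of \Cref{measurable_selection} would supply a measurable selection $y\mapsto\hat{\pi}^y$ of optimizers of \eqref{pi-y-def}; replacing $\wt{\pi}^\star(\cdot,\cdot\mid y)$ by $\hat{\pi}^y$ on $A$ and gluing against $\delta(y-z)\,\eta_\Y(z)$ as in \eqref{eq:pi-expansion} produces another element of $\Pi_\Y(\eta,\nu)$ with strictly smaller $K$-value, contradicting (i). Hence $\wt{\pi}^\star(\cdot,\cdot\mid y)$ solves \eqref{pi-y-def} for $\eta_\Y$-a.e.\ $y$.

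The delicate step is the measurable selection in (ii): the cost $c^y(v,u)=c(y,v;y,u)$ depends on the parameter $y$, whereas \Cref{measurable_selection} is stated for a fixed cost. This extension is standard but not automatic; one can either embed the parametrized problem into a single OT problem on $\Y\times\V\times\U$ whose marginals are supported on fibers over $y$ and apply \Cref{measurable_selection} there, or invoke a generic selection theorem such as Kuratowski--Ryll-Nardzewski applied to the closed-valued multifunction $y\mapsto\argmin\{\int c^y\,\dd\gamma:\gamma\in\Pi(\eta(\cdot\mid y),\nu(\cdot\mid y))\}$. Beyond this measurability issue, the remaining steps are routine OT manipulations.
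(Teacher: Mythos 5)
Your proof is correct, but it takes a genuinely different route from the paper's. The paper's proof is constructive: it first uses \Cref{prop:standard-Kantorovich-solvability} to produce fiberwise optimizers $\pi^y$ of \eqref{pi-y-def}, invokes \Cref{measurable_selection} to choose them measurably, glues them as $\pi^\star(z,v,y,u)=\pi^y(v,u)\,\delta(y-z)\,\RefM_\Y(y)$, and then shows directly that $K(\pi)\geq K(\pi^\star)$ for every $\pi\in\Pi_\Y(\RefM,\TargetM)$ by comparing fiber integrals. Existence (i) and the characterization (ii) drop out simultaneously from this single construction. You instead split the argument: existence via the lower semi-continuity of $c_\chi$ and the finite-cost hypothesis applied to the unconstrained reformulation \eqref{eq:kantorovich-unconstrained}, and then the fiber-optimality of $\wt{\pi}^\star(\cdot,\cdot\mid y)$ by a contradiction/exchange argument that swaps in a measurably-selected better coupling on a positive-$\RefM_\Y$-mass set. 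Both strategies work; yours is more modular but uses measurable selection in a slightly heavier way (you also implicitly need measurability of the value function $y\mapsto\inf_{\pi\in\Pi(\RefM(\cdot\mid y),\TargetM(\cdot\mid y))}\int c^y\,\dd\pi$ so that the exceptional set $A$ is measurable, which is delivered by the same selection machinery), while the paper's constructive gluing sidesteps having to isolate a \emph{bad} set.

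One thing worth emphasizing: the subtlety you flag about the $y$-dependence of the cost $c^y(v,u)=c(y,v;y,u)$ in the measurable selection step is real, and it is present in the paper's own proof as well (the paper applies \Cref{measurable_selection}, which is stated for a fixed cost, directly to a $y$-parametrized cost without comment). Your proposed fix — lifting the fiberwise problems to a single OT problem on $\Y\times\V$ vs.\ $\Y\times\U$ between $\delta_y\otimes\RefM(\cdot\mid y)$ and $\delta_y\otimes\TargetM(\cdot\mid y)$ with the fixed cost $c$, where the couplings are forced onto $\{z=y\}$ — cleanly repairs this and is the right way to read the paper's implicit appeal to \cite[Cor.~5.22]{villani-OT}. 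This is a genuine contribution of your writeup beyond the paper's presentation.
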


\begin{proof}
Our proof strategy is to choose a candidate for  $\pi^\star$ which  is optimal for \eqref{pi-y-def}. Using this candidate we then define $\pi^\star$ and verify claims (i, ii).
By \Cref{prop:standard-Kantorovich-solvability}, the problem \eqref{pi-y-def} has a solution $\pi^y$ for every $y$. 
Furthermore, 
 the map
$y\to\left(\RefM(\cdot\mid y),\TargetM(\cdot\mid y)\right)$
is measurable by  \Cref{conditional_measure_prop} and so by \Cref{measurable_selection}
 there exists a measurable choice $y\mapsto\pi^{y}$ such that $\pi^{y}$ is an
optimal transference plan between  $\RefM\left(\cdot\mid y\right)$ and
$\TargetM\left(\cdot\mid y\right)$ \footnote{Note that the 
measurable selection argument is crucial here since the Kantorovich problems 
between the conditional measures $\RefM(\cdot \mid y)$ and $\TargetM(\cdot \mid y)$ can have multiple minimizers and so the mapping $y \to \pi^y$ may be multi-valued.}. 
We can therefore define the candidate coupling
$\pi^{\star}(z,v,y,u)=\pi^{y}(v,u)\delta(y-z)\RefM_{\Y}(y)$
which we will now show is optimal. 
By construction, $\pi^{\star}\in\Pi_{\Y}\left(\RefM,\TargetM\right)$
and $\wt{\pi}^{\star}(v,u\mid y) = \pi^y(v,u)$ which solves \eqref{pi-y-def}.
We now have, by definition of $c^y$, that
\begin{equation*}
\begin{aligned}
 \int_{\Y}\Big[\int_{\mathcal{V\times\U}}c^y\left(v,u\right)& \dd\pi^{y}(v,u)\Big]\dd\RefM_{\Y}(y)
= \\
& \int_{\Y\times\Y}\left[\int_{\mathcal{V\times\U}}c\left(z,v; y, u\right)\dd\pi^{y}(v,u)\right]\dd\delta(y-z)\dd \RefM_{\Y}(y) 
= K(\pi^{\star}).
\end{aligned}
\end{equation*}
Now consider an arbitrary  $\pi \in \Pi_\Y(\RefM, \TargetM)$ 
as in~\eqref{eq:pi-expansion}, with associated cost $K(\pi)$, towards showing
that $K(\pi)\geq K(\pi^\star)$, which would imply that $\pi^{\star}$ is optimal. 
Disintegration along $y,z$ fibers, and recalling the notation of \Cref{prop:Conditional Coupling Conditioning}, gives
\begin{equation*}
\begin{aligned}
K(\pi)& =\int_{\Y\times\Y}\left[\int_{\mathcal{V\times\U}}c\left(z, v; y, u\right)\dd\pi\left(v,u\mid y,z\right)\right]\dd\pi(y,z), \\
& = \int_{\Y}\left[\int_{\mathcal{V\times\U}}c^y\left(v,u\right)\dd \wt{\pi}\left(v,u\mid y\right)\right]\dd\RefM_{\Y}(y),
\end{aligned}
\end{equation*}
where the second identity follows because  $\pi$ is concentrated
 on $\{ y=z \}$ and its $\Y$-marginal is $\RefM_{\Y}$ since it is an element of $\Pi_\Y(\RefM, \TargetM)$. 
We can now compute
\begin{align*}
K(\pi)& -K(\pi^{\star})  \\
&= \int_{\Y}\left[\int_{\mathcal{V\times\U}}c^y\left(v,u\right)\dd\wt{\pi}(v,u \mid y)\right]\dd\RefM_{\Y}(y)-\int_{\Y}\left[\int_{\mathcal{\mathcal{V\times\U}}}c^y\left(v,u\right)\dd\pi^y\left(v,u\right)\right]\dd\RefM_{\Y}(y), \\
&= \int_{\Y}\left[\int_{\mathcal{\mathcal{V\times\U}}}c^y\left(v,u\right)\dd\wt{\pi}\left(v,u\mid y\right)-\int_{\mathcal{\mathcal{V\times\U}}}c^y\left(v,u\right)\dd\pi^{y}(v,u)\right]\dd\RefM_{\Y}(y) \ge 0,
\end{align*}
where the last inequality follows from the fact that
$\pi^{y} \in \Pi( \RefM(v\mid y) , \TargetM(u\mid y))$ is optimal 
for each $y$ and so $\int_{\mathcal{\mathcal{V\times\U}}}c^y\left(v,u\right)\dd\wt{\pi}\left(v,u\mid y\right)-\int_{\mathcal{\mathcal{V\times\U}}}c^y\left(v,u\right)\dd\pi^{y}(v,u) \ge 0$. This establishes the optimality of $\pi^\star$. We can further take $\wt{\pi}^\star 
= \pi^y$ to conclude the proof of (i), and (ii) is satisfied by construction.

\begin{remark}\label{rem:uniqueness}
The optimality of $\widetilde{\pi}^\star(v, u \mid y)$ in \cref{pi-y-def} in combination with the essential uniqueness of the conditional measures from \Cref{conditional_measure_prop} means that whenever the cost function $c^y(v,u)$ and the conditional measures $\RefM(v\mid z)$ satisfy conditions for uniqueness of OT solutions for all $y$, the conditional OT coupling is also unique up to measure zero sets. In particular, this holds in the case that $c^y(v,u) = c(v,u) =  \|u-v\|_\U^{p}$, or more generally, $c(v,u)=h(\|u-v\|_\U)$ for strictly convex  $h$ and under the assumption that $\RefM(v\mid z)$ is regular.
\end{remark}

{}
\end{proof}

\begin{remark}\label{rem:Finite-optimal-cost}
We note that \Cref{prop:conditional_kantorovich} 
assumes the existence of a  finite cost 
coupling for \eqref{eq:Constrained Kantorovich}. While this assumption 
may seem stringent at first sight, it is often easy to verify in practice.
For example, consider the case where 
$\U$ is a separable Hilbert space, and $\mathcal{V=U}$ with 
$c(z, v; y, u)=\|v-u\|_\U^{2}$, then a simple assumption that
ensures the existence of a 
finite coupling is that $\int\|u\|_\U^{2}d\RefM<\infty$ and $\int\|v\|_\U^{2}d\TargetM<\infty$
\footnote{In fact, this is precisely Assumption (M) in  \cite{carlier2016vector}.}.
To see this, consider the coupling
$\pi(z,v,y,u)=\RefM\left(v\mid y\right)\TargetM\left(u\mid y\right) \delta(y-z) \RefM_{\Y}(y)$. 
 We call this the conditional independence coupling since it is the law of
the random variable $(z, v, y, u)$ by drawing $y\sim\RefM_{\Y}$,
setting $z=y$ and then sampling $v\sim\RefM\left(\cdot \mid y\right)$ and $u\sim\TargetM\left(\cdot \mid y\right)$
independently. We can now write, recalling that $\RefM_\Y = \TargetM_\Y$,
\begin{align*}
 K(\pi^{\star}) &\leq  \int_{\mathcal{Y\times U\times Z\times V}} \|v-u\|_\U^{2} \dd\pi(z, v, y ,u) \\
&\leq  \int_{\mathcal{Y\times U\times \Y\times V}} \left(\|v\|_\U^{2}+\|u\|_\U^{2}\right)   \RefM(\dd v\mid y)\TargetM(\dd u\mid y) \dd \delta(z-y) 
\dd \RefM_{\Y}(y). \\
&=\int_{\Y \times \U} \|v\|_\U^{2} \dd\RefM(z, v) 
+ \int_{\Y \times \U} \|u\|_\U^{2} \dd\TargetM(y, u) < \infty.
\end{align*}

We also note that the above argument can be extended to broader classes of costs that satisfy 
$c(z,v; y, u)\leq C \left(c(z, v; z,  v') + c(y, u; y, u')\right),$  for some constant $C\ge 0$ and fixed points $v', u' \in \V \times  \U$.
Then if $\int c(z, v; z, v')\dd\RefM(z, v)<\infty$ and  $\int c(y, u; y, u')\dd\TargetM(y, u)<\infty$ then the finite
cost assumption can be verified.
\end{remark}

\subsection{The conditional dual problem}\label{subsec:conditionalDual}
We now turn our attention to the dual problem of \eqref{eq:Constrained Kantorovich} 
and show that a strong duality result, akin to the standard result
\cite[Thm.~5.10]{villani-OT} holds; in fact, our proof relies heavily on the 
standard setting after considering the unconstrained 
formulation \eqref{eq:kantorovich-unconstrained}.


\begin{proposition}\label{prop:dualValue}
     Let $\V, \U, \Y$ be Polish spaces and suppose \Cref{assump:y-marginals-match} holds. 
 Consider a cost function $c:(\Y \times \V) \times (\Y \times \U) \to\mathbb{R}$ that is 
 lower semicontinuous and 
 $\inf c > - \infty$ 
 \footnote{A slightly more general assumption would the existence of upper semicontinuous functions $a \in L^1_\RefM(\Y \times \V)$ and $b\in L^1_\TargetM(\Y \times \U)$ such that 
$    c(z, v; y, u) \geq a(z,v)+b(y,u) $
for all $(z,v,y,u)$.}.  
Then it holds that
\begin{align*}
\min_{\pi\in\Pi_{\Y}(\RefM,\TargetM)} K(\pi) 
&= \sup_{\substack{(\psi,\phi)\in L^{1}(\RefM)\times L^{1}(\TargetM) \\ \phi-\psi\leq c_\chi }}\left(\int_{\mathcal{Y\times U}}\phi(y,u)\dd\TargetM(y,u)
-\int_{\mathcal{Y\times V}}\psi(z,v)\dd\RefM(z,v)\right), \\
&= \sup_{\psi\in L^{1}(\RefM)}\left(\int_{\Y\times\U}\psi^c_v(z,u)\dd\TargetM(z,u)-\int_{\Y\times\V}\psi(z,v)\dd\RefM(z,v)\right), \\
&= \sup_{\phi\in L^{1}(\TargetM)}\left(\int_{\Y\times\U}\phi(y,u)\dd\TargetM(y,u)-\int_{\Y\times\V}\phi^c_u(y,v)\dd\RefM(y,v)\right),
\end{align*}
where the partial $c$-transforms  of $\psi$ and $\phi$ are defined as
\begin{equation*}
    \psi^c_v(z, u) := \inf_v \bigg( \psi(z, v) + c(z, v; z, u)  \bigg), 
    \qquad 
    \phi^c_u(y, v) := \sup_u \bigg( \phi(y, u) - c(y, v; y, u)  \bigg).
\end{equation*}

\end{proposition}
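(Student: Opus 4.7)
The plan is to derive all three identities from the classical Kantorovich duality theorem applied to the unconstrained reformulation \eqref{eq:kantorovich-unconstrained} of the conditional Kantorovich problem with modified cost $c_\chi = c + \chi$. Once standard duality is established for $c_\chi$, the two $c$-transform formulations in the statement will emerge by observing that the full $c_\chi$-transform of any test function collapses to the partial $c$-transforms of the statement, because the penalty $\chi$ forces the $\Y$-coordinates to agree.

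First I would verify the hypotheses of \cite[Thm.~5.10]{villani-OT} for the cost $c_\chi$. The diagonal $\{(y,z)\in\Y\times\Y : y=z\}$ is closed, so $\chi$ is lower semicontinuous, and therefore $c_\chi$ is lsc as a sum of lsc functions. Moreover $\inf c_\chi \geq \inf c > -\infty$. Classical Kantorovich duality then yields
\begin{equation*}
\min_{\pi\in\Pi(\RefM,\TargetM)} \int c_\chi\,\dd\pi = \sup_{\substack{(\psi,\phi)\in L^{1}(\RefM)\times L^{1}(\TargetM) \\ \phi-\psi\leq c_\chi}} \left( \int_{\Y\times\U}\phi\,\dd\TargetM - \int_{\Y\times\V}\psi\,\dd\RefM \right),
\end{equation*}
and under \Cref{assump:y-marginals-match} the left-hand side coincides with $\min_{\pi\in\Pi_\Y(\RefM,\TargetM)} K(\pi)$, since any coupling not supported on the diagonal $\{y=z\}$ has infinite $c_\chi$-cost. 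This establishes the first identity.

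For the second and third identities, I would compute the $c_\chi$-transforms explicitly. For $\psi\in L^1(\RefM)$, the fact that $\chi(y,z)=+\infty$ off the diagonal gives
\begin{equation*}
\psi^{c_\chi}(y,u) = \inf_{(z,v)}\bigl(\psi(z,v)+c(z,v;y,u)+\chi(y,z)\bigr) = \inf_v\bigl(\psi(y,v)+c(y,v;y,u)\bigr) = \psi^c_v(y,u),
\end{equation*}
and an analogous computation yields $\phi^{c_\chi}(z,v)=\phi^c_u(z,v)$ for $\phi\in L^1(\TargetM)$. Applying the standard reduction of the Kantorovich dual, which allows one to restrict the supremum to pairs of the form $(\psi,\psi^{c_\chi})$, respectively $(\phi^{c_\chi},\phi)$, then yields the two claimed $c$-transform formulations after a trivial relabeling of integration variables.

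The main obstacle I anticipate is the usual measurability and integrability subtlety in Kantorovich duality: one must ensure that the partial $c$-transforms $\psi^c_v$ and $\phi^c_u$, obtained as infima or suprema over the Polish fibers $\V$ and $\U$, remain admissible $L^1$ test functions so that restricting the supremum to $c_\chi$-conjugate pairs does not lose any value. Because $c_\chi$ inherits the lower semicontinuity and lower bound of $c$ after adding the lsc penalty $\chi$, the truncation, monotone convergence, and measurable selection arguments in the proof of \cite[Thm.~5.10]{villani-OT} carry over verbatim, so the reduction to partial $c$-transforms is justified and the three suprema coincide.
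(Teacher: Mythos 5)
Your proposal is correct and follows essentially the same route as the paper: reformulate via the modified cost $c_\chi$, invoke \cite[Thm.~5.10]{villani-OT} for strong duality, and observe that the $c_\chi$-transforms collapse to the partial $c$-transforms because $\chi$ restricts the extremization to the diagonal $\{y=z\}$. Your explicit verification that $\chi$ is lower semicontinuous (closed diagonal) and your remark on the integrability of the $c$-transforms make the argument slightly more self-contained than the paper's version, but the core idea is identical.
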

\begin{proof}
Consider the formulation \eqref{eq:kantorovich-unconstrained}.
 Observe that this problem, with the cost $c_\chi$, satisfies the 
assumptions of \cite[Thm.~5.10]{villani-OT}
for strong duality and so
\begin{equation}\label{disp:standard-duality-c-chi}
\begin{aligned}
\min_{\pi\in\Pi_{\Y}(\RefM,\TargetM)} K(\pi) 
&= \sup_{\substack{(\psi,\phi)\in L^{1}(\RefM)\times L^{1}(\TargetM); \\ \phi-\psi\leq \widetilde{c}}}\left(\int_{\mathcal{Y\times U}}\phi(z,v)\dd\TargetM(y,u)-\int_{\mathcal{Y\times V}}\psi(y,u)\dd\RefM(z,v)\right), \\
&= \sup_{\psi\in L^{1}(\RefM)}\left(\int_{\Y\times\U}\psi^{c_\chi}(y,u)\dd\TargetM(y,u)-\int_{\Y\times\V}\psi(z,v)\dd\RefM(z,v)\right), \\ 
&= \sup_{\phi\in L^{1}(\TargetM)}\left(\int_{\Y\times\U}\phi(y,u)\dd\TargetM(y,u)
-\int_{\Y\times\V}\phi^{c_\chi}(z,v)\dd\RefM(z,v)\right),
\end{aligned}
\end{equation}
where $\psi^{c_\chi}(y,u):=\inf_{(z,v)}\left(\psi(z,v)+c_\chi(z,v;y,u)\right)$ and similarly,  $\phi^{c_\chi}(z,v):=\sup_{(y,u)}\left(\phi(y,u) - c_\chi(z,v;y,u)\right)$.
Since $c_\chi$ is real valued when restricted to $\{y=z\}$ and infinite
valued when $\{y\neq z\}$, we see that when taking the infimum in the definition of $\psi^{c_\chi}$, we may
restrict to  $\{y=z\}$ and so 
\begin{equation*}
\psi^{c_\chi}(y,u)=\inf_{z,v}\left(\psi(z,v)+ c_\chi(z,v;z,u)\right)
=\inf_{v}\left(\psi(z,u)+c(z, v; z, u)\right)
=\psi^c_v(z,v)
\end{equation*}
and a similar calculation can be repeated to show $\phi^{c_\chi} = \phi^c_u$.
Substituting back into \eqref{disp:standard-duality-c-chi} completes the proof.
\end{proof}

\begin{remark}
    In the case that $c$ is a metric we recover an analogous
    version of the Kantorovich-Rubinstein duality \cite[Particular Case 5.16]{villani-OT} since in 
    this case $c_\chi$ is also a metric 
    as the sum of the metric $c$ and an $\infty$-valued discrete metric on the $y$ and $z$ variables. Therefore,
\begin{equation*}
\min_{\pi\in\Pi_{\Y}\left(\RefM,\TargetM\right)} K(\pi)=\sup_{\psi\in L^{1}(\RefM) 
\cap \Lip^1_{c_\chi}}
\left(
\int \psi(z, v) \: \dd \eta(z, v) 
-
\int \psi(y, u) \: \dd \nu(y, u) 
\right),
\end{equation*}
where $\Lip^1_{c_\chi}$ is the space of 1-Lipschitz functions with respect to $c_\chi$.
\end{remark}


\subsection{Solvability of the conditional  Monge problem}\label{subsec:conditionalMonge}
We dedicate this section to establishing the solvability of the 
conditional Monge problem \eqref{conditional-Monge} which we 
recall below:
\begin{equation}\label{eq:constrained-Monge}
    \min_T M(T), \quad \st \quad T\# \eta =\nu, \text{ and } T(z, v) = (z, T_\U(z, v) ). 
\end{equation}
Our main result in this direction establishes the 
existence of unique solutions for this problem under 
structural assumptions on the cost as well as regularity assumptions on the 
reference
measure $\RefM$. We state the result for simple families of costs $c$
 that are often used in practice (see for example 
\cite{carlier2016vector, al2023optimal, taghvaei2022optimal, alfonso2023generative, hosseini-taghvaei-OTPF-2}) and provide a 
remark later on how the result can be generalized to broader choices.




\begin{proposition}\label{prop:conditional_monge_solvability}
Let $\U$ be a separable Hilbert space, take $\V = \U$, and let $\Y$
be a Polish space. Consider the cost 
$    c(z, v; y, u) = c(v,u) =    \|u-v\|_\U^{p}$
for $p\in\left(1,\infty\right)$.   
Suppose that (a) the measures $\RefM$ and $\TargetM$ satisfy 
\Cref{assump:y-marginals-match} and 
 have finite $p$-th moments and
 (b) the reference conditionals $\RefM( \cdot \mid y)$
 assign zero measure to Gaussian null sets, i.e., regular 
 according to \Cref{def:regular-measure}, for $\RefM_\Y$-a.e. $y$.
Then $\pi^\star = (\Id \times T^\star)\# \eta$ 
is  the unique
solution of \eqref{eq:Constrained Kantorovich}
where $T^\star$ uniquely solves \eqref{eq:constrained-Monge}.
\end{proposition}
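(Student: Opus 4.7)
The plan is to combine \Cref{prop:conditional_kantorovich} with a fiberwise application of \Cref{ambrosio_hilbert_space}, and then extract the Monge map from the Dirac structure of the disintegrated optimal Kantorovich coupling. First, I would verify that the infimum in \eqref{eq:Constrained Kantorovich} is finite by using the conditional independence coupling from \Cref{rem:Finite-optimal-cost}, which applies here since $\RefM$ and $\TargetM$ have finite $p$-th moments. This allows me to invoke \Cref{prop:conditional_kantorovich} to obtain an optimal coupling $\pi^\star$ with the disintegration $\pi^\star(z,v,u\mid y) = \wt{\pi}^\star(v,u\mid y)\delta(y-z)$, where for $\RefM_\Y$-a.e. $y$ the coupling $\wt{\pi}^\star(\cdot,\cdot\mid y)$ is an optimal Kantorovich transference plan between $\RefM(\cdot\mid y)$ and $\TargetM(\cdot\mid y)$ for the cost $\|u-v\|_\U^p$.

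Next I would apply \Cref{ambrosio_hilbert_space} on each fiber $\{y\}\times\U$. By Fubini, the finite $p$-th moments of $\RefM,\TargetM$ yield finite $p$-th moments for the conditionals $\RefM(\cdot\mid y)$ and $\TargetM(\cdot\mid y)$ for $\RefM_\Y$-a.e. $y$. Combined with the regularity hypothesis on $\RefM(\cdot\mid y)$, \Cref{ambrosio_hilbert_space} produces, for a.e. $y$, a unique optimal coupling of the conditionals which is induced by a unique optimal transport map $T_y:\U\to\U$ satisfying $T_y \# \RefM(\cdot\mid y) = \TargetM(\cdot\mid y)$. Consequently $\wt\pi^\star(v,u\mid y) = (\Id \times T_y)\#\RefM(\cdot\mid y)$, and each such conditional concentrates on the graph $\{u = T_y(v)\}$.

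The main technical step is to package the fiberwise maps $\{T_y\}$ into a single jointly measurable map $T^\star(y,v):=T_y(v)$. Rather than attempting a direct measurable selection across $y$ (the route in \cite{carlier2016vector}, which requires additional care), I would extract $T^\star$ from $\pi^\star$ itself, using the fact that $\pi^\star$ is Borel and that its $(v,u)$-conditionals are Dirac on graphs. Concretely, since for $\RefM_\Y$-a.e. $y$ the support of $\wt\pi^\star(\cdot,\cdot\mid y)$ lies in $\{u=T_y(v)\}$, the disintegration theorem on the Polish product space produces a Borel map $T^\star:\Y\times\V\to\U$ such that $\pi^\star = (\Id\times T^\star)\#\eta$, where $T^\star(y,\cdot)$ agrees $\RefM(\cdot\mid y)$-a.e. with $T_y$. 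Optimality of $T^\star$ for \eqref{eq:constrained-Monge} is then immediate from $M(T^\star) = K(\pi^\star)$ together with the fact that any Monge competitor yields a Kantorovich competitor through the graph coupling.

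For uniqueness, suppose $\tilde T$ is another minimizer of \eqref{eq:constrained-Monge}. Then $\tilde\pi := (\Id\times\tilde T)\#\eta \in \Pi_\Y(\RefM,\TargetM)$ is an optimal Kantorovich coupling, so by \Cref{prop:conditional_kantorovich}(ii) it disintegrates into Kantorovich optimal conditional couplings between $\RefM(\cdot\mid y)$ and $\TargetM(\cdot\mid y)$; by the fiberwise uniqueness in \Cref{ambrosio_hilbert_space}, each such conditional must equal $(\Id\times T_y)\#\RefM(\cdot\mid y)$, whence $\tilde\pi = \pi^\star$ and therefore $\tilde T = T^\star$ $\eta$-a.e. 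This simultaneously establishes uniqueness of the Kantorovich and Monge solutions. The hardest part of this plan is the measurable extraction of $T^\star$ from $\pi^\star$; this is precisely the subtlety the authors flag, and the strategy is to defer the selection to the Kantorovich level, where the disintegration theorem combined with fiberwise concentration on graphs handles it cleanly.
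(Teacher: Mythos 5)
Your proposal follows essentially the same route as the paper's proof: verify finite cost via \Cref{rem:Finite-optimal-cost}, invoke \Cref{prop:conditional_kantorovich} to obtain $\pi^\star$ with its fiberwise disintegration, apply \Cref{ambrosio_hilbert_space} on each conditional to get unique fiberwise maps $T_y$, and then extract a jointly measurable $T^\star$ from the graph structure of $\pi^\star$ rather than attempting a direct measurable selection across $y$. The only cosmetic difference is in how you justify measurability: the paper writes the map as an explicit integral $T_z(v)=\int_\U \wt{\pi}^\star(u,v\mid z)\,\dd u$ (a barycentric projection, measurable in $(z,v)$ by Fubini), whereas you invoke the disintegration theorem together with the Dirac-on-a-graph structure of the conditionals; both are standard and equivalent ways of saying the same thing, and both defer the selection subtlety to the Kantorovich level exactly as the authors do.
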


\begin{proof}
Let $\pi^{\star}$ be a solution to problem \eqref{eq:Constrained Kantorovich}, which 
exists following \Cref{prop:conditional_kantorovich} 
and has finite cost by \Cref{rem:Finite-optimal-cost}. Disintegrating $K(\pi^\star)$ 
we can write
\begin{align*}
&\inf_{\pi} \int_{\Y\times\U\times\Y\times\U} c(v,u) \, \dd\pi(z,v,y,u) 
= \int_{\Y\times\U\times\Y\times\U} c(v,u) \, \dd\pi^{\star}(z,v,y,u), \\
&\int_{\Y\times \Y} \left[ \int_{\U \times \U} c(v,u) \, \dd\pi^{\star}(v,u \mid z, y) \right] 
\dd \delta( z- y) \dd \eta_\Y(z)
= \int_{\Y} \left[ \int_{\mathcal{U\times U}} c(v, u) \dd\pi^{y}(v,u) \right] \, \dd\RefM_{\Y}(y).
\end{align*}
where $\pi^{y}(v,u)$ is an optimal coupling of $\RefM(v\mid y)$ and $\TargetM(u\mid y)$ per the notation in 
the proof of \Cref{prop:conditional_kantorovich}. By \cref{ambrosio_hilbert_space} the coupling $\pi^y$ is induced by 
a unique OT map, i.e., $\pi^y = ( Id \times T_y ) \# \eta(\cdot \mid y)$. 
We now wish to argue that the maps $T_y$ can be concatenated together 
in such a way that the solution to \eqref{eq:constrained-Monge} is given by $T^\star(z,v) = (z,T^\star_\U(z,v))$ where 
$T^\star_\U(z, v) = T_z(v)$ $\RefM$-a.e. In order to see that this procedure results in a measurable map, note that $T_z(v) = 
\int_\U \pi^z(u, v) \dd u =\int_{\U} \wt{\pi}^\star(u,v\mid z)\dd u$. 
This yields the joint measurability of $T_z(v)$ in $z$ and $v$ by Fubini's theorem 
which implies that our candidate map $T^\star(z,v)$ is measurable. The optimality of $T^\star$ follows from the cost being equivalent to that of $\pi^\star$, the solution to the Kantorovich problem, which lower bounds the optimal Monge cost, yielding optimality. 
Furthermore, the maps $T^\star_\U(z, \cdot)$ are 
essentially unique due to \cref{rem:uniqueness}. This 
in turn implies the essential uniqueness of the map $T^\star$.
\end{proof}

\begin{remark}
    Note that our argument for the measurability of $T^\star$
    is brief and simple (for example, in comparison to \cite{carlier2016vector}). This is due to the fact 
    that we already dealt with measurable selection issues 
    in defining the solution to the Kantorovich problem. Since 
    the solution to that problem was obtained by concatenation of 
    couplings between the conditionals we were able to simply 
    integrate those conditionals for fixed $z$ to define the 
    maps $T_y$. 
\end{remark}

\begin{remark}
We highlight that our method of proof for \Cref{prop:conditional_monge_solvability}
mainly relies on \Cref{prop:conditional_kantorovich} 
and a solvability result for the Monge problems on 
the conditionals $\eta(\cdot \mid y)$ and $\nu(\cdot \mid y)$. This implies 
that the result can be generalized to broader choices of $\U$ and $c$
for which we can still establish the solvability of the Monge 
problems between the conditionals. A simple example would be 
the choice of costs of the form $c(z, v ; y, u) 
= \| z - y\|_\Y^p + \| u - v \|_\U^q$ for $p,q \in (1, + \infty)$.
\end{remark}

\subsection{Relaxing the conditional Kantorovich and Monge problems}\label{subsec:relaxedProblem}
In this section we consider a relaxation of the conditional Kantorovich \eqref{eq:Constrained Kantorovich} and Monge \eqref{eq:constrained-Monge} problems with quadratic costs to 
an unconstrained problem with a singularly perturbed cost function. Our result can be 
viewed as the Hilbert space
generalization of the main result of \cite{carlier2010knothe} where the authors 
showed that the KR map can be obtained as a limit of OT maps obtained from singularly perturbed 
costs. Our interest in this relaxation stems from algorithmic considerations. For example, 
constructing triangular OT maps by first transporting the conditionals of $\RefM, \TargetM$
is not practical in situations where we only have access to an empirical approximation 
to $\TargetM$ since in this case we may only have a single $u$ sample per each $y$ sample. 
This motivates the formulation of an OT problem between $\RefM, \TargetM$ directly, where the solution to the empirical OT problem converges to that of the true solution. 
Motivated by these applied considerations we present the following result:

\begin{proposition}\label{prop:conditional_limit}
Let $\U$ and $\Y$ be separable Hilbert spaces
and consider the cost $c(z, v,; y, u)  = c(v,u) = \| u - v \|_\U^2$. 
Suppose the conditions of  \Cref{prop:conditional_monge_solvability} hold for $p=2$
so that the conditional Kantorovich and Monge problems have unique solutions $\pi^\star$
and $T^\star$ respectively.
For $\epsilon>0$,  define
\begin{equation}\label{eq:perturbed-OT}
    K_{\epsilon}(\pi) := \int c_\epsilon \dd \pi,
\qquad c_\epsilon(z, v; y, u) := \| z - y\|_\Y^2 + \epsilon\| v - u\|_\U^2. 
\end{equation}
 Let $\pi_\epsilon:=\inf_\pi K_\epsilon (\pi)$. Then 
 (i)  the $\pi_\epsilon$ are unique and $\pi_\epsilon = (\Id \times T_\epsilon)\# \RefM$
 for a unique map $T_\epsilon$;
 (ii) $\pi_\epsilon \rightharpoonup \pi^\star$ (weak convergence) as $\epsilon\to0$; 
 and (iii) $T_\epsilon$ converges in $L^2_\RefM(\Y \times \U; \Y \times \U) $ to the triangular map $ T^\star$ defined in  \cref{prop:conditional_monge_solvability}  as $\epsilon \to 0$.
\end{proposition}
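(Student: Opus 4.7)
For part (i), I would apply \Cref{ambrosio_hilbert_space} to the OT problem $\inf_{\pi\in\Pi(\eta,\nu)}K_\epsilon(\pi)$ on the product Hilbert space $\Y\times\U$ endowed with the rescaled inner product $\langle(y,u),(y',u')\rangle_\epsilon:=\langle y,y'\rangle_\Y+\epsilon\langle u,u'\rangle_\U$, so that $c_\epsilon(z,v;y,u)=\|(z,v)-(y,u)\|_\epsilon^2$ is the squared Hilbert norm. Both $\eta$ and $\nu$ have finite second moments with respect to this norm by assumption, and the regularity of $\eta$ on $\Y\times\U$ follows from the regularity of the conditionals $\eta(\cdot\mid y)$ (assumption (b) of \Cref{prop:conditional_monge_solvability}) together with regularity of $\eta_\Y$ through a disintegration argument on Gaussian null sets. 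Hence there is a unique coupling $\pi_\epsilon=(\Id\times T_\epsilon)\#\eta$ induced by a unique map $T_\epsilon$.

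For part (ii), I would use a $\Gamma$-convergence style variational argument in which the first term $\|z-y\|_\Y^2$ plays the role of an infinite-weight penalty enforcing $y=z$ as $\epsilon\to 0$. Since $\pi^\star\in\Pi_\Y(\eta,\nu)$ is concentrated on $\{y=z\}$, we have
\begin{equation*}
K_\epsilon(\pi^\star)=\epsilon\int\|v-u\|_\U^2\,\dd\pi^\star=:\epsilon C,
\end{equation*}
with $C<\infty$ by finite second moments. Optimality of $\pi_\epsilon$ then yields $\int\|z-y\|_\Y^2\,\dd\pi_\epsilon\le\epsilon C$ and $\int\|v-u\|_\U^2\,\dd\pi_\epsilon\le C$. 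The family $\{\pi_\epsilon\}\subset\Pi(\eta,\nu)$ has fixed marginals and is therefore tight, so by Prokhorov some subsequence $\pi_{\epsilon_k}\rightharpoonup\pi_0$ with $\pi_0\in\Pi(\eta,\nu)$. Lower semicontinuity of weak convergence applied to the continuous nonnegative function $\|z-y\|_\Y^2$ combined with the bound $\le\epsilon_k C$ forces $\int\|z-y\|_\Y^2\,\dd\pi_0=0$, so $\pi_0\in\Pi_\Y(\eta,\nu)$. To identify $\pi_0$ as the conditional Kantorovich optimum, I would test against any $\tilde\pi\in\Pi_\Y(\eta,\nu)$: optimality gives
\begin{equation*}
\int\|v-u\|_\U^2\,\dd\pi_{\epsilon_k}\le\int\|v-u\|_\U^2\,\dd\tilde\pi-\tfrac{1}{\epsilon_k}\int\|z-y\|_\Y^2\,\dd\pi_{\epsilon_k}\le\int\|v-u\|_\U^2\,\dd\tilde\pi,
\end{equation*}
and lower semicontinuity yields $\int\|v-u\|_\U^2\,\dd\pi_0\le\int\|v-u\|_\U^2\,\dd\tilde\pi$. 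By the uniqueness statement in \Cref{prop:conditional_monge_solvability}, $\pi_0=\pi^\star$; since every subsequential limit agrees, the full family satisfies $\pi_\epsilon\rightharpoonup\pi^\star$.

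For part (iii), I would invoke \Cref{lem:mapconvergence} on the ambient separable Hilbert space $\Y\times\U$: both $\pi_\epsilon=(\Id\times T_\epsilon)\#\eta$ and $\pi^\star=(\Id\times T^\star)\#\eta$ are deterministic couplings between the measures $\eta$ and $\nu$ (both with finite second moments), and the weak convergence from (ii) transfers directly to $L^2_\eta(\Y\times\U;\Y\times\U)$ convergence of the maps. The principal obstacle is the simultaneous extraction of the hard constraint $\pi_0\in\Pi_\Y$ and the optimality of $\pi_0$ in (ii) from the single competitor inequality $K_\epsilon(\pi_\epsilon)\le\epsilon C$; this is handled cleanly by the two-tier bound above, but crucially requires that $\pi^\star$ itself be a finite-cost competitor, which is exactly the content of assumption (a). A secondary technical point is justifying regularity of $\eta$ on the product $\Y\times\U$ in (i) from conditional regularity alone; if needed, this can be bypassed by restricting to the subclass of couplings arising from the conditional Kantorovich structure and applying \Cref{ambrosio_hilbert_space} fiberwise, which still yields uniqueness of $\pi_\epsilon$.
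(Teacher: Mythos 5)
The proposal is correct and follows the same overall architecture as the paper's proof: \Cref{ambrosio_hilbert_space} for (i), a variational argument for (ii), and \Cref{lem:mapconvergence} for (iii). Your argument for (ii) is, however, organized more cleanly than the paper's: instead of establishing continuity of the optimal value $\epsilon \mapsto \min_\pi K_\epsilon(\pi)$ and then passing to the limit in $\int c_\epsilon \,\dd\pi_\epsilon \le \int c_\epsilon \,\dd\pi^\star$, you use the explicit competitor bound $K_\epsilon(\pi_\epsilon)\le\epsilon C$ together with lower semicontinuity under weak convergence to extract both $\int\|z-y\|_\Y^2\,\dd\pi_0 = 0$ (placing $\pi_0\in\Pi_\Y$) and the cost comparison against an arbitrary $\tilde\pi\in\Pi_\Y$, then invoke uniqueness from \Cref{prop:conditional_monge_solvability}. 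This sidesteps the paper's detour through the continuity of the value function and the intermediate disintegration argument, while reaching the same conclusion.

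Two small caveats. First, the fallback you suggest for part (i) --- restricting to the subclass of couplings arising from the conditional Kantorovich structure and applying \Cref{ambrosio_hilbert_space} fiberwise --- does not work: $\pi_\epsilon$ is by definition the minimizer of the \emph{unconstrained} problem $\inf_{\pi\in\Pi(\eta,\nu)}K_\epsilon(\pi)$, and it does not lie in $\Pi_\Y$ for $\epsilon>0$, so restricting the competitor class would change the problem being solved. Your primary route, viewing $\sqrt{c_\epsilon}$ as a Hilbert norm on $\Y\times\U$, is what the paper uses. Second, assumption (b) of \Cref{prop:conditional_monge_solvability} gives regularity only of the conditionals $\eta(\cdot\mid y)$ on $\U$; it does not give regularity of $\eta_\Y$, which your disintegration argument for regularity of $\eta$ on $\Y\times\U$ tacitly requires. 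The paper's own proof is equally terse on this point, so this does not put you behind the reference, but you should not attribute to assumption (b) something it does not contain.
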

\begin{proof}
    As stated earlier our result is related to the main result in 
 \cite{carlier2010knothe} and follows a similar path at the beginning of 
 the proof before deviating towards the end where we rely on  \Cref{prop:Conditional Coupling Conditioning,prop:conditional_monge_solvability} instead of direct arguments. The key differences are that (1) here $\Y,\U$ are separable Hilbert spaces 
 while \cite{carlier2010knothe} considers finite-dimensional Euclidean spaces and (2)
 we work under \Cref{assump:y-marginals-match} which simplifies our proof.

Statement (i) follows directly from \Cref{ambrosio_hilbert_space} by observing that 
$\sqrt{c_\epsilon}$ is a norm defined on $\Y \times \U$ which is a separable Hilbert space. 

We now prove (ii), which consists the main technical aspects of the proof. 
Let 
$c_{\epsilon}(z,v,y,u)=\|z-y\|_\Y^{2}+\epsilon\|v-u\|_\U^{2}$
so that
$K_{\epsilon}(\pi)=\int c_{\epsilon}\,\dd\pi$
and set 
\begin{align*}
\pi_{\epsilon}:=\arg\min_{\pi}K_{\epsilon}(\pi), \qquad 
\pi_{0}:=\lim_{\epsilon\to0}\arg\min_{\pi}K_{\epsilon}(\pi),
\end{align*}
where the limit  exists (up to a subsequence) by the tightness of the set of couplings between fixed marginals. \cite[Lemma 4.4]{villani-OT}.
By the  optimality of $\pi_{\epsilon},$ we have that
\begin{equation}\label{disp-1}
    \int c_{\epsilon}\,\dd\pi_{\epsilon}\leq\int c_{\epsilon}\,\dd\pi^\star. 
\end{equation}
Towards 
taking the limit $\epsilon \to 0$ on both sides, 
note that the right hand side is continuous in $\epsilon$ by
the hypothesis that $\RefM$ and $\TargetM$ have finite second moments (so the 
integral is finite) and the 
fact that the measure $\pi^\star$ is fixed. 
On the other hand, we can verify that $\epsilon\mapsto\min_{\pi}K_{\epsilon}(\pi)$ is continuous since, 
\begin{equation}\label{disp-2}
\begin{aligned}
\left|\int c_{\epsilon}\,\dd\pi_{\epsilon}-\int c_{\epsilon'}\,\dd\pi_{\epsilon'}\right|
&=\left|\int c_{\epsilon}\,\dd\pi_{\epsilon}-\int c_{\epsilon'}\,\dd\pi_{\epsilon}+\int c_{\epsilon'}\,\dd\pi_{\epsilon}-\int c_{\epsilon'}\,\dd\pi_{\epsilon'}\right|, \\
&\leq\left|\int c_{\epsilon}\,\dd\pi_{\epsilon}-\int c_{\epsilon'}\,\dd\pi_{\epsilon}\right|+\left|\int c_{\epsilon'}\,\dd\pi_{\epsilon}-\int c_{\epsilon'}\,\dd\pi_{\epsilon'}\right|.
\end{aligned}
\end{equation}
Now let $\epsilon\to\epsilon'$. Once again by the 
compactness of the couplings we  know that 
$\pi_{\epsilon}\rightharpoonup\pi_{\epsilon'}$ up to a subsequence,
which means that the second term on the right hand side vanishes since the cost is fixed.
Additionally,  $|\int c_\epsilon \dd \pi_\epsilon - \int c_{\epsilon'} \dd \pi_\epsilon |
= |\epsilon - \epsilon'| \int_{\Y \times \U \times \Y \times \U}  
\| u - v\|^2_\U \pi( \dd z, \dd v; \dd y, \dd u) \le C |\epsilon - \epsilon'|  $
for a uniform constant $C > 0$ depending on the second moments of $\RefM, \TargetM$,
proving that the first term in the right hand side of \eqref{disp-2} also vanishes as 
$\epsilon \to \epsilon'$ allowing us to conclude that $\epsilon\mapsto\min_{\pi}K_{\epsilon}(\pi)$
is continuous. 
We can now take $\epsilon \to 0$ on both sides of \eqref{disp-1} 
to obtain 
\begin{equation}\label{disp-4}
\int c_{0}\,\dd\pi_{0}\leq\int c_{0}\,\dd\pi^{\star}.
\end{equation}
Now consider the operator $P_{\Y \times \Y}: (z, v; y, u) \to (z, y)$ 
and observe that 
since  $c_{0}(z,v,y,u)=\|z-y\|_\Y^{2}$ is independent of $v,u$ then
$P_{\Y \times \Y} \# \pi^\star$ is by definition 
the minimizer of $\int c_0 \dd \pi$ as it couples $z,y$ by the identity 
map and achieves zero cost. 
 Thus, by optimality of $\pi^\star$
we have that 
$\int c_{0}\,\dd\pi_{0}=\int c_{0}\,\dd\pi^{\star} =0.$
In turn this implies that 
$P_{\Y \times \Y}\# \pi_{0}=P_{\Y \times \Y}\# \pi^{\star} =: \pi^\star_{\Y \times \Y}$ 
since for any coupling $\pi$ we have that $\int c_0 \dd \pi = 
\int \| z - y \|_\Y^2  P_{\Y \times \Y} \# \pi(\dd z, \dd y)$
which is precisely the standard OT problem between the $\Y$ marginals of $\RefM$
and $\TargetM$ and has a unique solution.

Let us further define $P_{\U \times \U}: (z, v; y, u) \mapsto (v, u)$. 
 By the above observation and the fact that 
 $\pi_\epsilon$ is optimal for $c_\epsilon$ we can write,  
\begin{align*}
\epsilon &\int\|u-v\|_\U^{2}\,\dd\pi_{\epsilon}(z, v; u, u)
=\int\|z-y\|_\Y^{2}\, \dd \pi_{\Y \times \Y}^{\star}(z, y)
+\epsilon \int \|u-v\|_\U^{2}\, \dd P_{\U \times \U} \# \pi_{\epsilon}(v, u) \\
&\leq\int\|z-y\|_\Y^{2}+\epsilon\|u-v\|_\U^{2}\,\dd\pi_{\epsilon}(z,v; y, u) 
\leq\int \|z-y\|_\Y^{2}+\epsilon\|u-v\|_\U^{2}\,\dd\pi^{\star}(z, v; y, u) \\
&= \epsilon\int\|u-v\|_\U^{2}\,\dd\pi^{\star}(z, v; y, u).
\end{align*}
Hence, 
$\int\|u-v\|_\U^{2}\,\dd\pi_{\epsilon}\leq\int\|u-v\|_\U^{2}\,\dd\pi^{\star}$
for $\epsilon >0$ and so 
passing to the limit we get
\begin{equation}\label{disp-3}
    \int\|u-v\|_\U^{2}\,\dd\pi_{0}\leq\int\|u-v\|_\U^{2}\,\dd\pi^{\star}.
\end{equation} 
Notice that since $P_{\Y \times \Y} \# \pi_0 = \pi^\star_{\Y \times \Y} 
= (\Id \times \Id) \# \RefM_\Y$, then $\pi_0 \in \Pi_\Y$ and,
by \Cref{prop:Conditional Coupling Conditioning}, can be decomposed as 
$\pi_0( z, v; y,u) = \wt{\pi}_0(v, u \mid y) \delta(z- y) \RefM_\Y(y)$
where $\wt{\pi}_0 \in \Pi( \RefM(\cdot \mid y), \TargetM(\cdot \mid y))$.
This fact together with \eqref{disp-3} results in the inequality 
\begin{equation*}
    \int_\Y \left[ \int_{\U \times \U} \| u - v\|_\U^2 \dd \wt{\pi}_0(u, v \mid y)  \right]
    \dd \RefM_\Y(y) \le 
       \int_\Y \left[ \int_{\U \times \U} \| u - v\|_\U^2 \dd \wt{\pi}^\star(u, v \mid y)  \right]
    \dd \RefM_\Y(y).
\end{equation*}
Furthermore, the $\wt{\pi}^\star( \cdot, \cdot \mid y)$ 
are the unique optimal couplings of $\RefM(\cdot \mid y)$ and $\TargetM(\cdot \mid y)$ 
for a.e. $y$ which implies that 
$\wt{\pi}_0( \cdot, \cdot \mid y)$ should also be optimal for a.e. $y$ and so 
$\wt{\pi}_0 = \wt{\pi}^\star$ which completes the proof of the 
fact that $\pi_0 = \pi^\star$. Finally, (iii) follows immediately from $\cref{lem:mapconvergence}$.
{}
\end{proof}

\begin{remark}
    We note that while \Cref{prop:conditional_limit} is stated for the quadratic cost $c_\epsilon$, 
    the argument can be extended, with adjustment to the proof of \Cref{ambrosio_hilbert_space}, to 
    more general costs of the form $c_\epsilon(z, v,; y, u) = \| z - y\|_\Y^p + \epsilon \| v - u\|_\U^q$
    for exponents $p,q \in (1, +\infty)$. In fact, our proof of  (ii) remains unchanged.  
    One simply needs to verify the existence and  uniqueness of the $\pi_\epsilon$ in (i) and generalize \Cref{lem:mapconvergence} to  $L^p$ spaces for $p \in (1, + \infty)$ to generalize (iii). 
\end{remark}

\subsection{Extension to general triangular couplings and maps}\label{subsec:generic-triangular-OT}
Up to this point in this section we worked under
\cref{assump:y-marginals-match} that helped to simplify many of our arguments 
thanks to the fact that since the $\Y$ marginals of $\RefM$ and $\TargetM$ match, we can 
simply take the $T_\Y$ component of our triangular maps to be $\Id$. 
While this assumptions is sufficient for many practical applications, such as the BIPs considered in 
\Cref{sec:application}, there are still important problems where \Cref{assump:y-marginals-match}
does not hold and we may need to work with the broader family of triangular transport maps. 
In this section we present a generalization of our results to such a setting where 
$T_\Y$ itself is replaced with an OT map. Our result can be viewed as a generalization of KR rearrangements
to the case of Hilbert spaces. We present our result as a corollary since the proof follows 
directly from our previous results as well as standard results on the uniqueness of Monge maps.  

\begin{corollary}\label{cor:triangular}
Let $\Y,\U$ be separable Hilbert spaces. Consider the cost functions
Suppose that the reference marginal $\RefM_\Y$ and the reference conditionals $\RefM\left(\cdot \mid y \right)$ 
are regular and  $\RefM, \TargetM$ have finite  second moments 
Then it holds that
\begin{enumerate}[(i)]
    \item There exists a unique  map 
    $T^\star_\Y: \Y \to \Y$ that solves 
    \begin{equation*}
         \arg\min_{T_\Y} \int_{\Y} \| z - T_\Y(z) \|_\Y^2 \, \dd\RefM(z, v), \quad \st \quad T_\Y\#\RefM_\Y = \TargetM_\Y.
    \end{equation*}
    \item There exists a unique map $T^\star_\U: \Y \times \U \to  \U$ that solves
    \begin{equation*}
    \begin{aligned}
    \arg\min_{T_\U} & \int_{\Y} \| v - T_\U( z, v) \|_\U^2 \, \dd\RefM(z,v) \\ 
    & \st \quad 
    T(z, v) = (z, T_\U(z, v)), \quad T \circ (T_\Y^\star \times \Id) \# \RefM = \nu.         
    \end{aligned}
    \end{equation*}

    \item Define the map $T^\star: \Y \times \U \to \Y \times \U$ as $T^\star:= (\Id \times T^\star_\U) \circ (T^\star_\Y \times \Id)$. 
    If $T^\star_\Y$ is invertible \footnote{This will hold, for example, if $\TargetM_\Y$ is also 
    regular by \Cref{ambrosio_hilbert_space}(ii).}, then it holds that 
    $T^\star = \lim_{\epsilon \to 0} T_\epsilon$ in $L^2(\RefM)$ where $T_\epsilon$ are 
    the maps defined in \Cref{prop:conditional_limit}.
\end{enumerate}
\end{corollary}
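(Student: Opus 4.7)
The plan is to obtain all three statements as consequences of our earlier theorems, namely \Cref{ambrosio_hilbert_space,prop:conditional_monge_solvability,prop:conditional_limit,prop:Conditional Coupling Conditioning,lem:mapconvergence}, rather than redoing any arguments from scratch. Part (i) is immediate from \Cref{ambrosio_hilbert_space} applied to the Monge problem between $\RefM_\Y$ and $\TargetM_\Y$ with quadratic cost on $\Y$: both marginals inherit finite second moments from $\RefM,\TargetM$, and $\RefM_\Y$ is regular by hypothesis. For (ii), I introduce the pushforward $\tilde\RefM := (T^\star_\Y \times \Id)\#\RefM \in \PP(\Y \times \V)$, whose $\Y$-marginal equals $T^\star_\Y\#\RefM_\Y = \TargetM_\Y$, so that $(\tilde\RefM,\TargetM)$ satisfies \Cref{assump:y-marginals-match}. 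The conditionals $\tilde\RefM(\cdot \mid y)$ are either $\RefM(\cdot \mid (T^\star_\Y)^{-1}(y))$ when $T^\star_\Y$ is invertible, or mixtures of $\RefM(\cdot \mid z)$ over the fiber $(T^\star_\Y)^{-1}(y)$; in either case they inherit regularity from the conditionals of $\RefM$. A change of variables rewrites the objective in (ii) as the standard conditional Monge problem $\min_{T_\U}\int \|v - T_\U(y,v)\|_\U^2 \, \dd\tilde\RefM(y,v)$ with constraint $(\Id \times T_\U)\#\tilde\RefM = \TargetM$, to which \Cref{prop:conditional_monge_solvability} applies directly and delivers the unique $T^\star_\U$.

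For part (iii), the strategy mirrors that of \Cref{prop:conditional_limit}, with the modification that \Cref{assump:y-marginals-match} no longer holds. Existence and uniqueness of the perturbed maps $T_\epsilon$ come from \Cref{ambrosio_hilbert_space} on the Hilbert space $(\Y \times \U, \sqrt{c_\epsilon})$. Starting from the optimality inequality $\int c_\epsilon \dd\pi_\epsilon \le \int c_\epsilon \dd\pi^\star$ with $\pi^\star := (\Id \times T^\star)\#\RefM$, and passing to the limit via the same continuity and tightness arguments used in \Cref{prop:conditional_limit}, the leading-order $\|z-y\|_\Y^2$ term forces the $\Y \times \Y$-marginal of any subsequential weak limit $\pi_0$ to be the unique OT coupling $(\Id \times T^\star_\Y)\#\RefM_\Y$ rather than the diagonal coupling. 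Invoking \Cref{prop:Conditional Coupling Conditioning} with $T_\Y = T^\star_\Y$, using its invertibility, lets me decompose $\pi_0$ as $\tilde\pi_0(v,u \mid y)\,\delta(y - T^\star_\Y(z))\,\RefM_\Y(z)$ with $\tilde\pi_0(\cdot,\cdot \mid y) \in \Pi(\RefM(\cdot \mid (T^\star_\Y)^{-1}(y)), \TargetM(\cdot \mid y))$. The residual $\epsilon \|v-u\|_\U^2$ term, combined with the optimality inequality evaluated against $\pi^\star$, then forces each conditional $\tilde\pi_0(\cdot,\cdot \mid y)$ to be the unique OT coupling induced by $T^\star_\U(y,\cdot)$, yielding $\pi_0 = (\Id \times T^\star)\#\RefM$; uniqueness of this limit also identifies the full sequence rather than a subsequence. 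Finally, \Cref{lem:mapconvergence} upgrades weak convergence $\pi_\epsilon \rightharpoonup \pi_0$ to strong $L^2_\RefM$ convergence $T_\epsilon \to T^\star$.

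The principal obstacle is the third paragraph: the clean separation of $\Y$- and $\U$-transport in the limit hinges on invertibility of $T^\star_\Y$ and on the decomposition provided by \Cref{prop:Conditional Coupling Conditioning}, and care is needed to extract conditional optimality from the residual $\epsilon\|v-u\|_\U^2$ term rather than via ad hoc disintegration. A subtler but routine point is confirming that the hypothesized marginal and conditional regularity of $\RefM$ upgrade to regularity of $\RefM$ as a measure on the product space $\Y \times \V$, which is what \Cref{ambrosio_hilbert_space} requires in order to produce the perturbed Monge maps $T_\epsilon$ in the first place.
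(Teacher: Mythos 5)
Your proposal tracks the paper's own proof step for step: part (i) is a direct application of \Cref{ambrosio_hilbert_space} to the $\Y$-marginals; part (ii) introduces the pushforward reference $(T^\star_\Y\times\Id)\#\RefM$, checks that its $\Y$-marginal matches $\TargetM_\Y$ and that its conditionals inherit regularity (the paper makes this explicit via the null-set integral $\int_\Y \RefM(A\mid z)\,\delta(T^\star_\Y(z)-y)\,\dd\RefM_\Y(z)=0$, which is the precise form of your ``mixture'' argument), then invokes \Cref{prop:conditional_monge_solvability}; and part (iii) repeats the proof of \Cref{prop:conditional_limit}, replacing the diagonal coupling in the $c_0$ step by the nontrivial OT coupling $(\Id\times T^\star_\Y)\#\RefM_\Y$ and then applying \Cref{prop:Conditional Coupling Conditioning} with the invertible $T_\Y=T^\star_\Y$ before finishing with \Cref{lem:mapconvergence}. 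This is exactly the route the paper takes, and your flag that one must upgrade marginal-plus-conditional regularity to regularity on the product $\Y\times\U$ before applying \Cref{ambrosio_hilbert_space} to get $T_\epsilon$ is a fair caveat that the paper leaves implicit.
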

\begin{proof}
Statement (i) is a direct consequence of \cref{ambrosio_hilbert_space} under our hypothesis on $\RefM_\Y$. 
To prove (ii) consider 
 $\overline{\RefM} = \left(T^\star_\Y \times \Id\right)\sharp\RefM$ and note that 
$\overline{\RefM}_\Y = \nu_\Y$. 
Then the result follows from  \Cref{prop:conditional_monge_solvability} 
as soon as we verify that $\overline{\RefM}$ satisfies the conditions of that theorem, in particular that $\overline{\RefM}(\cdot \mid y)$
are regular for all $y$.
To see this, let $A \subset \B(\U)$ 
be a $\RefM(\cdot \mid z)$ null set for all $z$, that is, $\RefM(A \mid z) = 0$. Then $\overline{\RefM}(A \mid y) =  
    \int_\Y \RefM( A \mid z) \delta( T^\star_\Y(z) - y ) \dd \RefM_\Y( z) = 0.$
To prove (iii) we need to slightly generalized the proof of \Cref{prop:conditional_limit}. 
In fact, the proof is identical up to \eqref{disp-4} by taking $\pi^\star$ to be 
the coupling induced by the optimal map $T^\star$ which achieves zero cost for $c_0$. 
Then we repeat the same arguments until the paragraph after \eqref{disp-3}
where we use \Cref{prop:Conditional Coupling Conditioning} with a nontrivial and invertible 
map $T^\star_\Y$. 
\end{proof}

\begin{remark}
    We note that this result can be further generalized to 
    the setting where the costs in (i) and (ii) are replaced with 
    $\| z - T_\Y(z) \|_\Y^p$ and $\| v - T_\U(z, v) \|_\U^q$ for 
    $p, q \in (1, + \infty)$ respectively. 
    Then we need $\RefM_\Y$  and
     $\RefM(\cdot \mid y)$ 
    to have finite $p$ and $q$ moments respectively.
\end{remark}

\section{Application to BIPs}\label{sec:application}

In this section we outline the application of our results from \Cref{sec:conditionalOT}
to the case BIPs, giving a likelihood-free characterization of  
posterior measures. Consider separable Hilbert spaces $\U, \Y$ and a 
forward map $F: \U \times \Y \to \Y$ with a model 
\begin{equation*}
    y = F(u, \xi), \qquad \xi \sim \gamma \in \PP(\Y),
\end{equation*}
that 
relates a  parameter $u$ to data/observations $y$ with 
possible observation noise  $\xi$.
The goal of Bayesian inference is to estimate the parameter $u$ given an instance 
of the data $y$. To achieve this, 
consider  a  prior measure $\mu \in \PP(\U)$, reflecting our prior belief about $u$, 
and define the joint measure 
\begin{equation}\label{Bayesian-joint-measure}
     \nu = \Law \big\{ (y, u)  \: : \: \text{draw } u \sim \mu, \: \xi \sim \gamma \text{ and set } 
     y = F(u, \xi) \big\} \in \PP( \Y \times \U). 
\end{equation}
Then the conditional $\nu(\cdot \mid y)$ is precisely 
the Bayesian posterior measure that updates our prior belief about $u$
given the observed data $y$. The standard BIP framework 
proceeds to identify the posterior via Bayes' rule \eqref{Bayes-rule-generic}
by its Radon-Nikodym 
derivative where
 the likelihood function $\exp \left( - \Phi(u; y) \right)$ 
 is proportional to  the density of $\nu( y \mid u) $ with respect to $\gamma$, 
 i.e.,  $\exp( - \Phi( u; y) ) \propto 
 \frac{\dd \nu( \cdot \mid u)}{\dd \gamma}(y)$; see 
 \cite[Thm.~6.29]{stuart-acta-numerica}
 for details. 
 
 While MCMC approaches \cite{stuart-mcmc, beskos-geometric-mcmc, hosseini2019two} for sampling the posterior measure rely on 
 the explicit form of the likelihood potential $\Phi$ in conjunction with the choice of the prior 
 $\mu$, here we aim to characterize the posterior $\nu( \cdot \mid y)$ using conditional 
 OT maps that circumvent the explicit form of the likelihood, thereby yielding a 
 likelihood-free inference method that does not even 
 require absolute continuity of the posterior with respect to the prior
 \cite{grelaud2009abc,gutmann2016bayesian,papamakarios2019sequential,MGAN}. 
To this end, we consider the reference measure $\eta = \nu_\Y \otimes \mu $, 
i.e., the independence coupling between the $\Y$-marginal of $\nu$ and the prior $\mu$. 
This choice of the reference $\eta$ is natural in the setting where 
 $F$ is known and samples from $\mu$ can be generated cheaply, since we have that $\nu_\Y = F\# (\mu \otimes \gamma) $ and so reference samples can be generated at the cost of 
 evaluating the forward map. We can then obtain the following corollary to 
 \Cref{prop:conditional_monge_solvability}. 

 \begin{corollary}\label{cor:prior-to-posterior-map}
     Suppose $\U, \Y$ are separable Hilbert spaces,  the prior  
     $\mu \in \PP(\U)$ is regular 
     according to \Cref{def:regular-measure}, $\gamma \in \PP(\Y)$, and 
      $F$ is continuous. 
      For $p \in (1, \infty)$ consider the conditional Monge problem 
      \begin{equation}\label{Bayesian-conditional-Monge}
          \inf_T \int_{\Y \times \U}  \| v - T_\U(z, v) \|_\U^p  \: \dd \eta( z, v)  
          \quad {\st }\quad T \#  \eta = \nu \text{ and } T(z,v) = (z, T_\U(z, v)).
      \end{equation}
      If the infimum  is finite and feasible then the solution $T^\star_\U$ 
      satisfies $T^\star_\U(y, \cdot) \# \mu = \nu(\cdot \mid y)$, $\nu_\Y$-a.e. 
 \end{corollary}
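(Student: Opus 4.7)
The plan is to recognize this corollary as essentially a direct application of \Cref{prop:conditional_monge_solvability} combined with \Cref{prop:triangular-transport-from-MGAN}, once we verify that the chosen reference $\eta = \nu_\Y \otimes \mu$ satisfies the hypotheses of the former. So the work is really just checking hypotheses and chaining results, rather than doing new analysis.

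First, I would observe that with $\eta = \nu_\Y \otimes \mu$ we automatically have $\eta_\Y = \nu_\Y$, so \Cref{assump:y-marginals-match} holds. Moreover, because $\eta$ is a product measure, the conditionals satisfy $\eta(\cdot \mid y) = \mu$ for every $y \in \Y$. By the hypothesis that $\mu$ is regular in the sense of \Cref{def:regular-measure}, condition (b) of \Cref{prop:conditional_monge_solvability} is satisfied uniformly in $y$. Next, the finite $p$-th moment condition (a) of that proposition is implied by the assumed finiteness and feasibility of the infimum in \eqref{Bayesian-conditional-Monge}: a feasible coupling of finite cost bounds $\int \|v - T_\U(z,v)\|^p \dd \eta$, and combined with a triangle inequality in $L^p_\eta$ this yields the required moment bounds on the $\U$-marginals of both $\eta$ and $\nu$ (the $\Y$-marginals agree and are unchanged by $T$).

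With these hypotheses in hand, \Cref{prop:conditional_monge_solvability} (applied with $\V = \U$ and the cost $c(v,u) = \|u-v\|_\U^p$) yields a unique minimizer $T^\star$ of \eqref{Bayesian-conditional-Monge} satisfying $T^\star \# \eta = \nu$ and $T^\star(z,v) = (z, T^\star_\U(z,v))$, so $T^\star$ lies in the triangular class $\TT(\Y, \U; \Y, \U)$ with $\Y$-component equal to $\Id$. I would then invoke \Cref{prop:triangular-transport-from-MGAN} with $\Z = \Y$, $\V = \U$, and $T_\Y = \Id$: since $T^\star$ transports $\eta$ to $\nu$ and is triangular, its $\U$-component satisfies
\begin{equation*}
T^\star_\U(y, \cdot) \# \eta(\cdot \mid y) = \nu(\cdot \mid T_\Y^{-1}(y)) = \nu(\cdot \mid y),
\end{equation*}
for $\nu_\Y$-a.e. $y$. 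Substituting $\eta(\cdot \mid y) = \mu$ on the left gives the desired identity $T^\star_\U(y,\cdot) \# \mu = \nu(\cdot \mid y)$.

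There is essentially no technical obstacle, since the regularity of $\eta(\cdot \mid y)$ is immediate from the product structure of $\eta$ rather than being a nontrivial disintegration fact (which is the main subtlety one would otherwise face in a more general setup). The only mild bookkeeping step is extracting the finite-moment conditions from the finiteness-and-feasibility hypothesis; if one prefers, this can be stated as a separate hypothesis, but under mild assumptions such as $\mu$ having finite $p$-th moment and $F$ being continuous with $\gamma$ having finite $p$-th moment, the bound follows from a straightforward application of the inequality used in \Cref{rem:Finite-optimal-cost}. Beyond this, the corollary is a clean packaging of the general theory for the Bayesian setting.
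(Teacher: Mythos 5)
Your proof follows the same route as the paper's: use the product structure of $\eta = \nu_\Y \otimes \mu$ to get $\eta(\cdot\mid y) = \mu$, invoke the regularity of $\mu$ for hypothesis (b) of \Cref{prop:conditional_monge_solvability}, apply that proposition, and read off the conclusion via \Cref{prop:triangular-transport-from-MGAN}. The paper's proof is essentially this one-liner, with one extra ingredient you omit: it records that continuity of $F$ ensures $\nu$ is a well-defined Radon measure on $\Y \times \U$ (citing Bogachev), so the conditionals $\nu(\cdot\mid y)$ and the Monge problem are well posed.

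One step in your writeup is not correct as stated. You claim the finite $p$-th moment hypothesis of \Cref{prop:conditional_monge_solvability} can be extracted from the finiteness of the infimum plus a triangle inequality in $L^p_\eta$. The triangle inequality only shows $\int\|T_\U(z,v)\|_\U^p\,\dd\eta < \infty$ \emph{given} that $\int\|v\|_\U^p\,\dd\eta < \infty$ and the cost is finite; it cannot produce the finite $p$-th moment of the $\U$-marginal of $\eta$ (i.e.\ of $\mu$) from a finite infimum alone. If $T_\U(z,v)\approx v$ the cost can be finite while $\mu$ has no $p$-th moment at all. You do notice this and offer to instead assume $\mu$, $\gamma$ have finite $p$-th moments; that is the right fix. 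Worth noting that the paper's own proof also does not close this gap explicitly---it jumps directly to \Cref{prop:conditional_monge_solvability} without verifying the moment hypothesis---so the corollary as written is implicitly folding the moment conditions into the ``finite and feasible'' assumption.
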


 \begin{proof}
     The continuity of $F$ ensures that $\nu$ is a well-defined Radon measure by 
     \cite[Thm.~9.1.1]{bogachev2}.  Then the corollary follows directly from 
     \Cref{prop:conditional_monge_solvability} with the 
     cost $c(z, v; y, u) = \| v - u\|_\U^p$ 
     and by observing that $\eta(\cdot \mid y)  = \mu$ which is assumed  regular.
 \end{proof}

We interpret \Cref{cor:prior-to-posterior-map} as a characterization of a family of 
prior-to-posterior maps $T^\star_\U(y, \cdot)$ parameterized by 
the data $y$. We  highlight a few important points: first, 
our choice of the reference $\eta$ is non-standard 
in comparison to other conditional generative models in the 
literature such as \cite{adler2018deep, MGAN} where $\eta = \nu_\Y \otimes \eta_\U$
with $\eta_\U$ taken to be standard Gaussian while here we take $\eta_\U = \mu$.  
Second, $\mu$ is completely arbitrary and so the transport-based characterization of 
posteriors naturally 
extends to non-Gaussian priors  
which is contemporary in the context of infinite-dimensional MCMC \cite{chen2018dimension, hosseini2019two, hosseini2023spectral}.

\subsection{Finite-dimensional approximations and perturbation analysis}\label{subsec:map-stability}
The use of conditional transport maps for practical infinite-dimensional 
inference requires the inevitable discretization of the problem 
to achieve a finite-dimensional approximation and so we turn our 
attention to such approximations of both the parameter $u$ and the
data $y$. Letting $\U^N$ denote an $N$-dimensional 
linear subspaces of $\U$ we consider  projection maps $P_\U^N: \U \to \U^N$, such as  
finite element discretizations of functions or projections onto 
Karhunen-Lo{\'e}ve modes. Writing $\mu^N : = P^N_\U \# \mu $  we can now define the 
approximate target measure
\begin{equation*}
\begin{aligned}
\nu^{N} & := \Law \left\{ (y, u)  \: : 
         \: \text{draw } u \sim \mu^N, \: \xi \sim  \gamma \text{ and set } 
      y =  F(u, \xi)  \right\} \in \PP( \Y \times \U),
\end{aligned}
\end{equation*}
with the associated posterior measures 
\begin{equation*}
    \frac{\dd \nu^N( \cdot \mid y) }{\dd \mu^N }(u) 
    = \frac{1}{Z^N(y)} \exp \left( - \Phi( u; y)  \right), 
    \qquad Z^N(y) = \int_\U  \exp \left( - \Phi(u; y) \right) \dd \mu^N(u).
\end{equation*}
The above measures are concentrated on the finite dimensional subspace $\U^N$
and hence amenable to numerical computations. Posing \eqref{Bayesian-conditional-Monge}
for the measure $\nu^N$ we obtain the discretized/approximate conditional Monge problem 
\begin{equation}\label{disp:discrete-conditional-Monge}
     \inf_T  \int_{\Y \times \U} \| v - T_\U(z, v) \|_\U^p \: \dd \eta(z, v)
     \quad 
     \st  
     \quad
     T\# \eta = \nu^N \text{ and } T(z, v) = ( z, T_\U(z, v)),
\end{equation}
with solution $T^N$. Since $T^N_\U(y, \cdot) \# \mu = \nu^N(\cdot \mid y)$ then 
we trivially obtain  the identity 
\begin{equation*}
    D( T^N_\U(y, \cdot) \# \mu, T^\star_\U(y, \cdot) \# \mu) = D( \nu^N, \nu),
\end{equation*}
for any statistical divergence or metric $D$ for a.e. $y$.
Controlling the right hand side of this expression in terms 
of the operator norm of $I - P^N_\U$
has 
been the subject of extensive study in the perturbation analysis of BIPs
\cite{sprungk2020local, garbuno2023bayesian}. 
We recall a simple version of such a result following \cite[Sec.~4.1, Cor.~3.6, and  Thm.~3.8]{garbuno2023bayesian}:

\begin{lemma}\label{lem:wasserstein-stability-posterior}
    Suppose $\Phi$ is positive, bounded, and bi-Lipschitz, and $\mu$
     has bounded moments of degree 2. Then there exist 
     constants $C_1, C_2 >0$ so that 
     \begin{equation*}
     \begin{aligned}
     W_1(\nu^N( \cdot \mid y), \nu( \cdot \mid y)) & \le C_1 W_2(\mu, \mu^N), && \forall N \ge 0, \\
     W_1(\nu(\cdot \mid y), \nu(\cdot \mid y') ) & \le C_2 
     \| y - y' \|_\Y, && \forall y,y' \in \Y. 
     \end{aligned}     
     \end{equation*}
\end{lemma}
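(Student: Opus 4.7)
The plan is to establish both bounds by combining Kantorovich--Rubinstein duality with Bayes' rule, which is the standard perturbation strategy for Bayesian posteriors.

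For the first inequality, I would represent both posteriors via Bayes' rule against their respective priors,
\[
\nu(\dd u \mid y) = \frac{1}{Z(y)} e^{-\Phi(u;y)} \mu(\dd u), \qquad \nu^N(\dd u \mid y) = \frac{1}{Z^N(y)} e^{-\Phi(u;y)} \mu^N(\dd u),
\]
and observe that the boundedness of $\Phi$ yields uniform two-sided bounds $0 < c \le Z(y), Z^N(y) \le C$ independent of $y$ and $N$. Applying Kantorovich--Rubinstein duality,
\[
W_1(\nu(\cdot \mid y), \nu^N(\cdot \mid y)) = \sup_{f \in \Lip_1(\U)} \left| \int f\, \nu(\dd u \mid y) - \int f\, \nu^N(\dd u \mid y) \right|,
\]
I would add and subtract $(Z^N(y))^{-1} \int f\, e^{-\Phi(\cdot;y)}\, \dd \mu$ to split the difference into a piece controlled by $|Z(y) - Z^N(y)|$ and a piece of the form $|\int g\, \dd\mu - \int g\, \dd\mu^N|$ where $g$ is a product of a $1$-Lipschitz function and a bounded Lipschitz exponential. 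The bi-Lipschitzness and boundedness of $\Phi$ render $u \mapsto e^{-\Phi(u;y)}$ uniformly Lipschitz in $y$, so each piece is controlled by $W_1(\mu, \mu^N) \le W_2(\mu, \mu^N)$ via Kantorovich--Rubinstein in the reverse direction.

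For the second inequality, the setup is symmetric but with a fixed prior $\mu$ and two data points $y, y'$. Again via Kantorovich--Rubinstein I would reduce to estimating $\sup_{f \in \Lip_1(\U)} |\int f\, \nu(\dd u \mid y) - \int f\, \nu(\dd u \mid y')|$. The mean-value bound $|e^{-\Phi(u;y)} - e^{-\Phi(u;y')}| \le C \|y - y'\|_\Y$, a consequence of bi-Lipschitzness and boundedness of $\Phi$, combined with the uniform two-sided bounds on $Z(\cdot)$, leads to an estimate of the form $\le C_2 \|y - y'\|_\Y \int (1 + \|u\|_\U)\, \dd\mu(u)$, which is finite by the second-moment hypothesis on $\mu$.

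The main obstacle is primarily bookkeeping: one must verify that all emerging constants depend only on the Lipschitz and boundedness constants of $\Phi$ together with the moment bound on $\mu$, and in particular that they remain uniformly bounded as $N \to \infty$ and as $y, y'$ range over $\Y$. This uniformity is dictated by the two-sided control on $Z(\cdot)$, which is the technical crux of the argument; everything else is a routine chain of Lipschitz estimates on bounded exponentials.
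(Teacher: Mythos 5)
The paper does not actually prove this lemma; it is imported by citation from an external reference on perturbation bounds for Bayesian posteriors. Your proposal therefore takes a genuinely different (self-contained) route, and the broad strategy — Bayes' rule, two-sided bounds on the normalizing constants from boundedness of $\Phi$, Kantorovich--Rubinstein duality on the posterior side, and an add-and-subtract decomposition — is the right skeleton. Your argument for the second inequality, involving only a fixed prior and two data points, is essentially correct once one normalizes $f$ to vanish at a base point so that $\int |f|\,\dd\mu$ is controlled by the first moment of $\mu$.

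However, there is a genuine gap in your argument for the first inequality. After reducing to a term of the form $\bigl|\int g\,\dd\mu - \int g\,\dd\mu^N\bigr|$ with $g = f\,e^{-\Phi(\cdot;y)}$, you claim this is controlled by $W_1(\mu,\mu^N)$ ``via Kantorovich--Rubinstein in the reverse direction.'' But $g$ is \emph{not} uniformly Lipschitz: since $\Phi$ is bounded (so $e^{-\Phi}$ is bounded below away from zero, not decaying) and $f$ is merely $1$-Lipschitz (hence growing linearly), the product rule gives a local Lipschitz constant for $g$ of order $1 + \mathrm{Lip}(\Phi)\,|f(u)|$, which grows linearly in $\|u\|$. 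So the $W_1$ duality bound simply does not apply to $g$. The correct mechanism — and the reason the lemma states $W_2(\mu,\mu^N)$ rather than $W_1(\mu,\mu^N)$ — is to take the optimal $W_2$ coupling $\pi$ of $\mu$ and $\mu^N$, write $|g(u) - g(v)| \le \bigl(1 + \mathrm{Lip}(\Phi)\,|f(v)|\bigr)\|u-v\|$, and apply Cauchy--Schwarz:
\begin{equation*}
\Bigl|\int g\,\dd\mu - \int g\,\dd\mu^N\Bigr| \;\le\; \Bigl(\int (1 + \mathrm{Lip}(\Phi)\,\|v\|)^2\,\dd\mu^N(v)\Bigr)^{1/2}\,W_2(\mu,\mu^N).
\end{equation*}
The second-moment hypothesis on $\mu$ (inherited by $\mu^N = P^N_\U\#\mu$ since projections are contractions) is exactly what makes the first factor finite and uniform in $N$; note that your proposal invokes the moment hypothesis only for the second inequality, which is another sign that the first argument is missing a step. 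Replacing the $W_1$-duality claim with this coupling-plus-Cauchy--Schwarz estimate repairs the proof.
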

Using the coupling $(\Id \times P^N_\U) \# \mu$
we can write $W_2^2(\mu, \mu^N) 
= \inf_\pi \int \| u - v \|^2_\U \dd \pi(u,v) 
\le \| \Id - P^N_\U \|_{L^2(\mu)}^2$ leading to the error bound 
\begin{equation*}
    W_1( T^N_\U(y, \cdot) \# \mu, T^\star_\U(y, \cdot) \# \mu) 
    \le C \| \Id - P^N_\U \|_{L^2(\mu)}.
\end{equation*}
This error bound is sufficient from the perspective of sampling since it quantifies the 
quality of samples coming from $T^N_\U$ as compared to the ground truth map $T^\star_\U$. However, it 
is also interesting to ask whether $T^N_\U$ is also a good approximation to $T^\star_\U$, such a 
result can be obtained in the finite-dimensional setting and for Brenier maps thanks to 
stability results for Wasserstein distances under perturbations \cite{berman2021convergence,merigot2020quantitative, delalande2021quantitative}. We state  such a result below. 

\begin{proposition}\label{prop:prior-to-posterior-Brenier-map-with-rates}
    Take $\U = \R^d$ and $c(z, v; y, u) = \| u - v\|_2^2$ and suppose that
    $\gamma \in \PP(\Y)$ and $F$ is continuous. Further suppose that: (a)
     the prior $\mu$ is supported on a compact and convex set $A \subset \R^d$
    and absolutely continuous with respect to the Lebesgue measure 
    with a density that is bounded from above and below on $A$; and (b) 
    the potential $\Phi$ is positive, bounded, and bi-Lipschitz. 
    Then for $\nu_\Y$-a.e. $y, y'$ and for $N \le d$, it holds that
    \begin{equation}\label{disp:map-rates}
        \begin{aligned}
        \| T^N_\U(y, \cdot) - T^\star_\U(y, \cdot) \|_{L^2(\mu)} & \le C_1 \| \Id - P^N_\U \|_{L^2(\mu)}^{1/6}, \\
        \| T^\star_\U(y, \cdot) - T^\star_\U(y', \cdot) \|_{L^2(\mu)} & \le C_2 \| y - y'\|_\Y^{1/6}.
        \end{aligned}
    \end{equation}
    Here the constants $C_1, C_2 \ge 0$ depend on $d, A, \Phi$ and the choice of  $\mu$. 
\end{proposition}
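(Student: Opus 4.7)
The plan is to identify both $T^\star_\U(y,\cdot)$ and $T^N_\U(y,\cdot)$ as Brenier maps and then combine quantitative stability of Brenier maps under perturbation of the target with the posterior stability estimates already collected in Lemma \ref{lem:wasserstein-stability-posterior}. First I would apply Corollary \ref{cor:prior-to-posterior-map} together with Proposition \ref{ambrosio_hilbert_space} to conclude that for $\nu_\Y$-a.e.\ $y$, the map $T^\star_\U(y,\cdot)$ is the unique Brenier map pushing $\mu$ onto $\nu(\cdot\mid y)$, and analogously $T^N_\U(y,\cdot)$ is the unique Brenier map pushing $\mu$ onto $\nu^N(\cdot\mid y)$. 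Hypothesis (a) supplies the required regularity of $\mu$ (absolute continuity with respect to Lebesgue measure on a compact convex domain), and (b) together with the compactness of $A$ ensures that the posterior families $\nu(\cdot\mid y)$ and $\nu^N(\cdot\mid y)$ are all supported in $A$ with densities $Z(y)^{-1}\exp(-\Phi(\cdot;y))$ bounded above and below uniformly in $y$.

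The second ingredient is the quantitative Brenier-map stability from \cite{merigot2020quantitative, delalande2021quantitative}: when the source $\mu$ has a density bounded above and below on a compact convex domain and the targets sit in a common bounded set, there is a H\"older bound of the form $\|T_1-T_2\|_{L^2(\mu)} \le C \, W_2(\nu_1,\nu_2)^{\alpha}$, with $C$ depending on $d, A, \Phi$, and $\mu$. This reduces both inequalities in \eqref{disp:map-rates} to estimating $W_2$ distances between the relevant posteriors with the appropriate exponents.

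For the first bound in \eqref{disp:map-rates}, I would chain the following estimates. Lemma \ref{lem:wasserstein-stability-posterior} yields $W_1(\nu^N(\cdot\mid y),\nu(\cdot\mid y))\le C\,W_2(\mu,\mu^N)$; the trivial coupling $(\Id\times P^N_\U)\#\mu$ gives $W_2(\mu,\mu^N) \le \|\Id - P^N_\U\|_{L^2(\mu)}$; and the inclusion of supports in $A$ permits upgrading to $W_2 \le \sqrt{\mathrm{diam}(A)}\,W_1^{1/2}$. For the second bound, I would apply the Lipschitz-in-$y$ estimate from Lemma \ref{lem:wasserstein-stability-posterior} together with the same $W_1\to W_2$ conversion. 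Composing these estimates with the Brenier-stability H\"older bound and tracking the exponents produces the advertised rate $1/6$ in both inequalities.

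The main technical obstacle is allocating the H\"older exponents across the three inequalities so they compose to precisely $1/6$, and verifying that the constants in the Brenier-stability theorems of \cite{merigot2020quantitative, delalande2021quantitative} can be made uniform in $y$, which hinges on the uniform density bounds on the posteriors guaranteed by the boundedness assumption on $\Phi$. The restriction $N\le d$ serves only to ensure that $P^N_\U$ is a genuine orthogonal projection within $\R^d$.
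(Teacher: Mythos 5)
Your high-level architecture is the same as the paper's: identify $T^\star_\U(y,\cdot)$ and $T^N_\U(y,\cdot)$ as Brenier maps pushing $\mu$ onto $\nu(\cdot\mid y)$ and $\nu^N(\cdot\mid y)$ respectively, note that hypotheses (a) and (b) give uniform density bounds on the posteriors supported in a common compact convex set, and then compose a quantitative stability estimate for Brenier maps with the posterior stability estimates in \Cref{lem:wasserstein-stability-posterior}. That much is correct.

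The gap is in your exponent bookkeeping, and you half-notice it yourself. You invoke a Brenier-stability estimate of the form $\|T_1-T_2\|_{L^2(\mu)}\le C\,W_2(\nu_1,\nu_2)^{\alpha}$ and then, because \Cref{lem:wasserstein-stability-posterior} only bounds $W_1$ between the posteriors, you insert the lossy conversion $W_2\lesssim\sqrt{\mathrm{diam}(A)}\,W_1^{1/2}$. Chaining this with the lemma and the projection coupling gives $\|T^N_\U-T^\star_\U\|_{L^2(\mu)}\lesssim\|\Id-P^N_\U\|_{L^2(\mu)}^{\alpha/2}$, so to land on $1/6$ you would need a $W_2$-stability exponent of $\alpha=1/3$. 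But the relevant result, \cite[Thm.~4.2]{delalande2021quantitative}, is already stated directly in terms of $W_1$ with exponent $1/6$: $\|T_1-T_2\|_{L^2(\mu)}\le C\,W_1(\nu_1,\nu_2)^{1/6}$. Since $W_1\le W_2$, the best $W_2$-exponent you could extract from it is also $1/6$, not $1/3$, so your chain would degrade to $1/12$. The paper's proof avoids this entirely: it applies the Delalande--M\'erigot bound directly in $W_1$, which matches the $W_1$ output of \Cref{lem:wasserstein-stability-posterior} with no conversion, and then bounds $W_2(\mu,\mu^N)\le\|\Id-P^N_\U\|_{L^2(\mu)}$ via the projection coupling, giving $1/6$ cleanly for both displays. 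Replace your $W_2$-based stability hypothesis and the $W_1\!\to\!W_2$ upgrade with the $W_1$-based theorem and the argument goes through.
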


\begin{proof}
    Hypothesis (b) ensures that the posterior measures $\nu(\cdot \mid y)$ and 
    $\nu^N( \cdot \mid y)$ are well-defined. 
    The choice of  $c$ implies that the maps $T^\star_\U(y, \cdot)$ and 
    $T^N_\U(y, \cdot) $ are precisely the Brenier prior-to-posterior maps  for each $y$. 
    Since $\mu$ is assumed to be supported on $A$ then the posterior measures $\nu(\cdot \mid y)$
    are also supported on the same set and the same holds for $\mu^N$ and the corresponding $\nu^N(\cdot \mid y)$. This, together with hypothesis (a) allows us to use \cite[Thm.~4.2]{delalande2021quantitative}
    to get the bounds 
    \begin{equation}\label{wasserstein-stability-display}
        \begin{aligned}
            \| T^N_\U(y, \cdot) - T^\star_\U(y, \cdot) \|_{L^2(\mu)} 
            & \le C_1 W_1( T^N_\U(y, \cdot)\# \mu, T^\star_\U(y, \cdot)\# \mu)^{1/6} \\ 
            \| T^\star_\U(y, \cdot) - T^\star_\U(y', \cdot) \|_{L^2(\mu)} 
            & \le C_2 W_1( T^\star_\U(y, \cdot)\# \mu, T^\star_\U(y', \cdot)\# \mu)^{1/6}.
        \end{aligned}
    \end{equation}
    The desired result now follows from \Cref{lem:wasserstein-stability-posterior}.
\end{proof}

Before the end of this section we make a few remarks regarding the implications and possible generalizations of the above result.  
    We emphasize that the main hurdle in obtaining analogous error bounds for $T^\star_\U$ in the setting of 
infinite-dimensional $\U$ is that, to our knowledge, analogous stability results to \eqref{wasserstein-stability-display} have not been established in that setting. In fact, any divergence $D$ 
for which \Cref{lem:wasserstein-stability-posterior} can be established and 
satisfies a stability result of the form $\| T_1  - T_2 \|_{L^2(\mu)} \le 
C D(T_1\# \mu, T_2 \#\mu)^\alpha$ 
for OT maps $T_1, T_2$ and constants $C, \alpha \ge 0$ can be used 
establish a  result similar to \Cref{prop:prior-to-posterior-Brenier-map-with-rates}.

The second display in \eqref{disp:map-rates} implies the Lipschitz 
continuity of $T^\star_\U(z, v)$ along the $z$ coordinate which has important 
implications when it comes to the numerical approximations of this map
where one may choose to parameterize it using polynomials, kernels, or 
neural nets. Obtaining quantitative error bounds for such parameterizations 
will depend on establishing the regularity of $T^N_\U$ on the space $\Y \times \U$
inherited from $T^\star_\U$; see for example \cite{baptista2023approximation}. 

Finally, we note that our error analysis is done entirely in the {\it population} regime 
where we assume that the measures $\mu, \nu$ are known. In likelihoood-free inference 
applications, such as our experiments in \Cref{sec:numerics}, 
we only have access to an empirical approximation to $\nu$ arising from a 
finite collection of samples $\{ y_j, u_j \}_{j=1}^J \sim \nu$ in which case, 
the statistical consistency and sample complexity of the underlying maps should be established. 
This is an exciting and contemporary  direction for future research that we 
do not address here. 

\section{Numerical Experiments}\label{sec:numerics}
We dedicate this section to numerical experiments that demonstrate how our theoretical 
analysis throughout the paper can lead to algorithms for likelihood-free/simulation 
based inference. We emphasize that our focus is 
not on algorithmic development and performance but rather on proof of concept. 
In \Cref{subsec:algorithms} we outline two algorithms that we used for 
conditional simulation, one based on direct solution of OT problems and another 
using a modified GAN. \Cref{subsec:2D-toy-example} presents our experiments involving 
two dimensional benchmark data sets while \Cref{subsec:darcy} outlines 
a set of experiments for the Darcy flow inverse problem introduced as a motivating 
example in the introduction. Throughout the section we give 
a brief account of the setup of our experiments and algorithms but 
further details and hyper-parameter choices can be found 
in our online repository\footnote{\url{https://github.com/TADSGroup/ConditionalOT2023}}.

\subsection{Summary of algorithms}\label{subsec:algorithms}
We focus on the particular setting where we only have access to empirical 
samples $\{ (y_j, u_j) \}_{i=1}^J \sim \TargetM$ and write $\TargetM^J$ 
to denote corresponding empirical measures. Similarly, we write $\RefM^J$ 
to denote the empirical measure of the samples $\{ (y_j, v_j) \}_{j=1}^J \sim \RefM$
where the $y_j$ are copies of the $y$-coordinates of the target samples 
while the $v_j \sim \RefM_\U$ are drawn from an arbitrary reference. This particular 
construction of the reference and target measures ensures that $\RefM^J$ and 
$\TargetM^J$ exactly satisfy \Cref{assump:y-marginals-match}.
In both algorithms the output is a map $\widehat{T}_\U$ which approximates 
the OT conditioning map $T^\star_\U$ from \Cref{sec:conditionalOT}. 
Given this approximation we can generate an arbitrary number of conditional samples 
simply by drawing new reference samples $v \sim \RefM_\U$ (assumed to be accessible) 
and evaluating $\widehat{T}_\U(y, v)$ to get an approximate sample from $\TargetM(\cdot 
\mid y)$.

Our first algorithm follows from
\cref{prop:conditional_limit} and the observation that the maps $T_\epsilon$
approximate the triangular map $T^\star$. Given the empirical measures $\RefM^J, \TargetM^J$ we formulate the OT problem \eqref{eq:perturbed-OT} for a 
small value of $\epsilon$. Since we choose the reference and target measures 
to have the same number of samples, the solution to the discrete OT problem 
defines a map on the sample set. We then extend this map to 
the entire support of $\RefM$ by a simple interpolation strategy. The resulting 
algorithm is summarized in \Cref{alg:plugin}.
This algorithm is very simple to implement as it only requires solving a linear program and interpolating the result. In our experiments, we used a distance weighted two nearest neighbor interpolant and observed that this simple approach 
worked well in low-dimensional problems such as the toy examples 
in \Cref{subsec:2D-toy-example} but it suffers from the curse of dimensionality in 
high-dimensional problems and is unable to take advantage of the 
smoothness of transport maps.

\begin{algorithm}[htp]
\footnotesize
\caption{Plugin Conditional OT}
\begin{algorithmic}[1]\label{alg:plugin}
\State \textbf{Input:} Samples $\{ (y_j,u_j) \} _{j=1}^{N}\sim \TargetM$ and
relaxation parameter $\epsilon >0$ 

\State \textbf{Output:} Conditioning map $\widehat T_\U:
\Y\times\U \to \U$

\State Draw $J$ samples $\{v_j\}_{j=1}^{J} \sim \RefM_\U$ and 
form empirical measures $\RefM^J, \TargetM^J \in \PP(\U \times \V)$.

\State Solve the empirical OT problem defined by cost \eqref{eq:perturbed-OT}
between $\RefM^J$ and $\TargetM^J$ to obtain a discrete map.

\State Construct $\widehat{T}_\U$ to satisfy the interpolation 
constraints $\widehat{T}_\U( y_j, v_j) = u_j$ for $j=1, \dots, J$.

\end{algorithmic}
\end{algorithm}

Our second algorithm follows \Cref{prop:conditional_monge_solvability} 
and aims to approximate the map $T^\star_\U$ by a relaxation of the 
conditional Monge problem. The idea here is to parameterize $T_\U^\star$
using an expressive function approximator, a neural net in our case, and 
solve a problem of the form 
\begin{equation*}\label{MGAN-approach}
    \widehat{T}_\U= \arg \min _{T_\U \in \mathcal{T}} 
    \int_{\Y \times \U} \|v - T_\U(z, v)\|^2_\mathcal{U} \dd \RefM(z, v) + 
    \frac{1}{\lambda} D( T\# \RefM^J,\TargetM^J), 
     \: \st \: T(z, v) = (z, T_\U(z, v)),
\end{equation*}
where $\mathcal{T}$ is a parametric family of functions from $\Y \times \U \to \U$, $D$ is a divergence/discrepancy that distinguishes pairs of measures in $\PP(\Y \times \U)$, 
and $\lambda > 0$ is a regularization parameter. Simply put, the above approach amounts to solving 
the conditional Monge problem \Cref{conditional-Monge} for the empirical measures 
and by relaxing the 
constraint $T\# \RefM  = \TargetM$ to simply minimize a divergence between $T \# \RefM$
and $\TargetM$. We implement an instance of this approach with $D$ taken to be 
a Wasserstein GAN-GP loss \cite{arjovsky2017wasserstein,gulrajani2017improved}
and letting $\mathcal{T}$ to be a neural net class with PCA projection 
and reconstruction maps in the input and output layers in the setting of function 
space data following the setup of \cite{MGAN, PCA-Net}. Indeed, our algorithm 
is closely related to the Monotone GANs of \cite{MGAN} with the exception 
that we minimize the Monge loss while Monotone GANs enforce monotonicity of 
the transport map $\widehat{T}_\U$ using a penalty term. 
\begin{algorithm}[htp]
\footnotesize
\caption{Conditional Wasserstein Monge GAN (WaMGAN)}
\begin{algorithmic}[1]\label{alg:WaMGAN}
\State \textbf{Input:} Samples $ \{ (y_j,u_j) \} _{j=1}^{J} \sim \TargetM$, 
regularization parameter $\lambda >0$
\State \textbf{Output:} Conditioning map $\widehat T_\U:\Y\times\U\to\Y\times\U$
\State Initialize batch size $M$, critic iterations $n_{\text{critic}}$, learning rate $\alpha > 0$ 
\State Initialize neural net generator $T_\U^\theta : \Y\times \U \to \U$ parameterized 
by weights $\theta$
\State Initialize neural net critic $C^w : \Y\times \V \to \mathbb{R}$ parameterized by weights $w$
\For{number of training iterations}
    \State Sample a mini-batch $\{ (y_j,u_j) \} _{j=1}^{M} \sim \TargetM^J$
    \State Sample a mini-batch $\{ v_j \}_{j=1}^M$ from $\RefM_\U$ 
    and form $\{ (y_j,v_j) \} _{j=1}^{M}$
    \State Update the critic by ascending its stochastic gradient \footnotemark:
    \[
    w \gets w + \alpha \cdot  \nabla_{w} \left[ \frac{1}{M} \sum_{j=1}^{M} C^w(y_j,u_j) - \frac{1}{M} \sum_{j=1}^{M} C^w(y_j,T_{\theta}(y_j,v_j)) + R(C_w)\right]
    \]
    \State Update the generator by descending its stochastic gradient:
    \[
    \theta \gets \theta - \alpha \cdot \nabla_{\theta} 
    \left[  \frac{1 }{M} \sum_{j=1}^M \|v_j -T_\U^\theta(y_j,v_j)\|_\U^2 -\frac{1}{\lambda M} \sum_{j=1}^{M} C_w(y_j,T_\U^\theta(y_j,v_j)) \right]
    \]
\EndFor
\State  $\widehat T_\U \gets T_\U^\theta$
\end{algorithmic}
\end{algorithm}

\subsection{Two dimensional benchmarks}\label{subsec:2D-toy-example}
Here we consider a set of two dimensional benchmark examples from 
\cite{grathwohl2018ffjord} outlined in Figure~\ref{fig:2d-conditioning}. 
Our goal is to sample vertical slices of the target measure 
$\TargetM \in \PP([-1, 1]^2)$. We use $J=20000$ target samples 
and take $\RefM_\U = N(0,1)$. The results presented in 
\Cref{fig:2d-conditioning} were obtained using \Cref{alg:plugin}
with $\epsilon = 5 \times 10^{-3}$ and two nearest neighbor interpolation 
to extend the map to entire unit square. Another example of 
these results was presented in \Cref{fig:pinwheel-intro}.

We generally observe good agreement between the numerical and true 
conditional histograms across these examples with larger errors 
when the slices pass through a low density regions. We also observe that our maps $\widehat{T}_\U(y, \cdot)$
are noisy due to the two nearest neighbor interpolation. One 
can obtain a smoother map by using a different interpolant or using 
more neighboring points but this leads to further smoothing 
of the conditional histograms.
\begin{figure}[htp!]
    \centering
    \begin{overpic}[width=.9\textwidth]
    {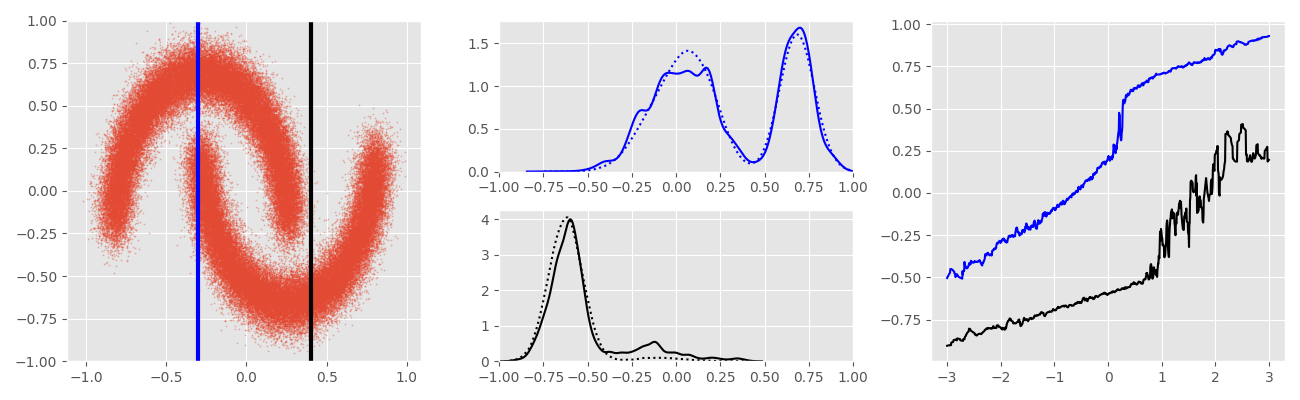}
        \put(-1, 16){\scriptsize $u$}
    \end{overpic}
    \begin{overpic}[width=.9\textwidth]
    {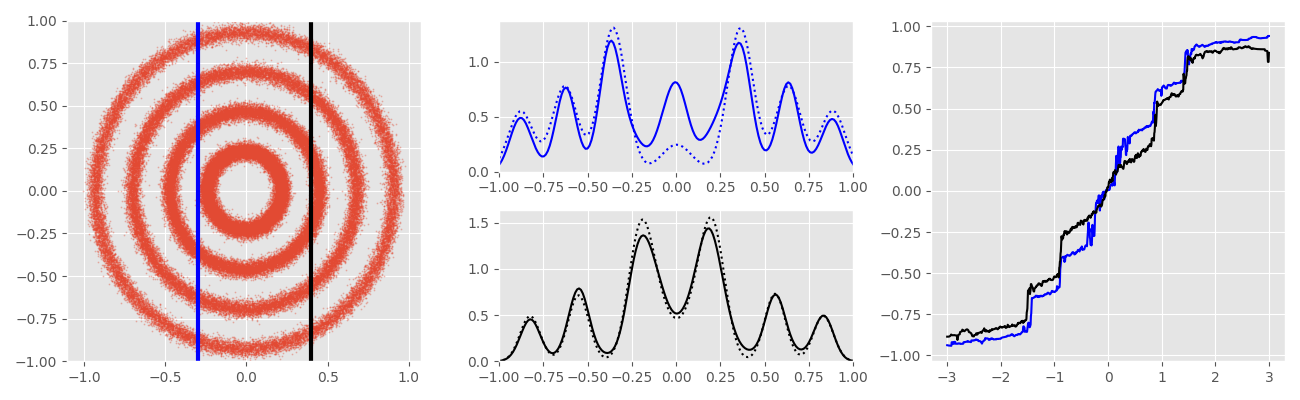}
        \put(-1, 16){\scriptsize $u$}
    \end{overpic}
    \begin{overpic}[width=.9\textwidth]
    {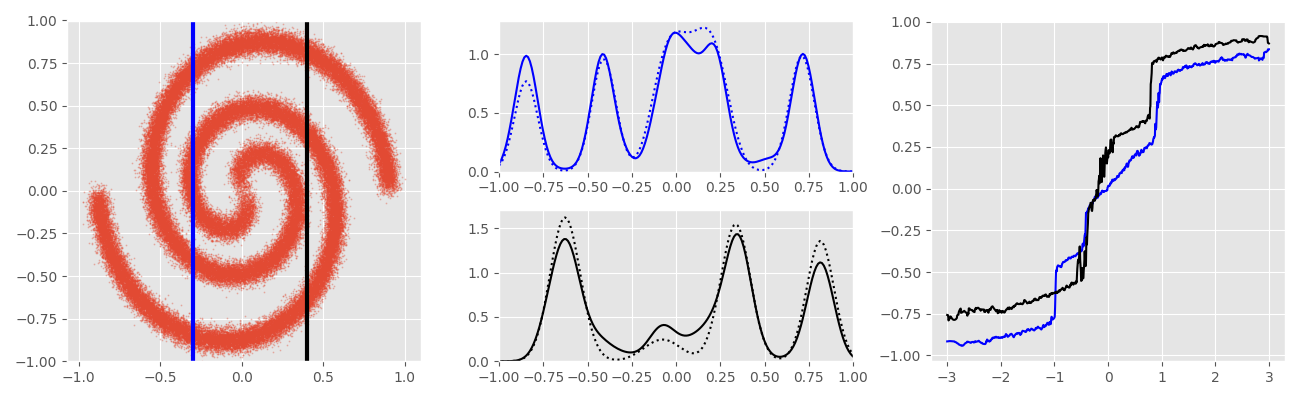}
        \put(-1, 16){\scriptsize $u$}
    \end{overpic}
    \begin{overpic}[width=.9\textwidth]
    {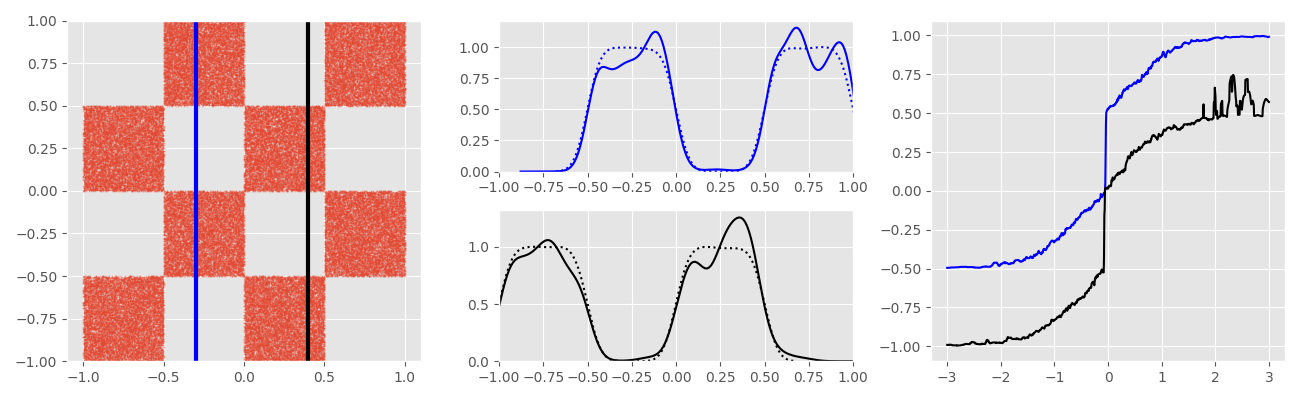}
        \put(-1, 16){\scriptsize $u$}
        \put(18, -1){\scriptsize $y$}
        \put(48, -1){\scriptsize $\TargetM(\cdot \mid y)$}
        \put(82, -1){\scriptsize $\widehat{T}_\U(y, \cdot)$} 
    \end{overpic}
    \caption{Conditional OT experiments on two dimensional benchmarks using 
    \Cref{alg:plugin}: (Left column)  the empirical samples from $\TargetM$. The 
    blue and black lines indicate slices along which we condition the measures. 
    (Middle column) comparison between the ground truth histogram
    indicated with dotted lines and 
    the conditional OT histogram indicated with solid lines. (Right column) 
    The conditioning maps $\widehat{T}_\U(y, \cdot)$ for each slice computed 
    using two nearest neighbor interpolation.
    }
    \label{fig:2d-conditioning}
\end{figure}


\subsection{Darcy flow Bayesian inverse problem}\label{subsec:darcy}
For our next example we consider the Darcy flow 
PDE in \eqref{Darcy-PDE}. Thanks to the Gaussian noise 
assumption for the data vector $y$ we obtain a quadratic likelihood
of the form 
\begin{equation}\label{darcy-likelihood}
    \Phi(u; y) = \frac{1}{2 \sigma^2} 
    \sum_{j=1}^m | p(u)(x_j) - y_j |^2, 
\end{equation}
where we recall $\sigma^2 >0$ is the measurement noise variance and 
we write $p(u)$ to denote the solution of \eqref{Darcy-PDE}
with a diffusion field $u$. 

We take the $x_j$'s (the measurement locations) to be a 
uniform $8\times 8$ grid over the unit square as shown in 
\Cref{fig:darcy-summary-stats} where we also show examples of $u^\dagger$
and $p(u^\dagger)$ for our experiments. 
We consider a 
Gaussian prior $\mu$ on $u$, in particular, we take $\mu = N(0, C)$
where $C$ is the Mat{\'e}rn covariance kernel
$C(x,y) =  \left(1 + \frac{\sqrt{3} |x-y|}{\ell}\right) \exp\left(-\frac{\sqrt{3} |x-y|}{\ell}\right)$ with  parameter 
$\ell = 1/2$. 

To solve the inverse problem 
we employed  \Cref{alg:WaMGAN} and compared its performance 
to the pCN algorithm \cite{stuart-mcmc} with which we take to be the 
ground truth. 
To train our map we used $J= 10^5$ samples that were 
generated by drawing $u_j \sim \mu$, computing $p(u_j)$
by solving the PDE using the FEniCSx package \cite{AlnaesEtal2015} 
on a fine mesh 
and  using interpolation to compute
$y_j = (p(u_j)(x_1), \dots, p(u_j)(x_{64}) ) + \epsilon_j$ 
with $\epsilon_j \sim N(0, \sigma^2 I)$; 
see our Github repository for further details of parameter 
choices for the solver and \Cref{alg:WaMGAN}.
In order to obtain mesh invariance, we first performed PCA on 
the  $u_j$ and applied the GAN to the latent PCA modes
to obtain conditional samples of the PCA modes of $u$ given $y$. 
We tuned the step size of pCN 
to obtain an average acceptance rate of $0.25$ 
after burn-in and used $5\cdot10^6$ iterations to ensure 
the chains had converged and the mean and variances were accurate and stable. We did not use PCA in our implementation of pCN since 
we could directly work with the covariance kernel of the prior.

In \Cref{fig:darcy-summary-stats} we present a summary of four instances of the 
Darcy problem. Here we drew four $u^\dagger$'s from the prior and generated synthetic data $y$. We solved the inverse 
problem using pCN for each data, taken to be the ground truth. The WaMGAN map
$\widehat{T}_\U$ was trained only once and was simply evaluated for each 
value of $y$ to generate posterior samples and compute statistics. This 
highlights the efficiency of triangular maps since each pCN run required a large 
number of forward map evaluations while WaMGAN only used $10^5$ evaluations. We also 
took care to present instances of the problem that are representative of the 
transport approach. In all four examples we see good agreement between 
the mean of WaMGAN and pCN, the former often captures the shape and features
of the mean but misses the amplitude sometimes. The variances are more smoothed out 
by WaMGAN but overall we see good agreement between the two methods in that 
regard as well. Further insight into the variance can be gained 
from \Cref{fig:variance-scatter} where we plot the pointwise variance
of WaMGAN posterior against pCN. Ideally we would like the data to lie on the 
diagonal line but we see a spread of variances and overall we see good 
agreement between the two methods but some spread exists indicating that WaMGAN 
can both over and under estimate variances in each instance of the problem. 
Finally, in \Cref{fig:darcy-samples} we compare five independent posterior samples 
from the two algorithms for one instance of the inverse problem. We see that 
these samples are qualitatively very similar, indicating that WaMGAN respects the 
regularity of the samples. We also observed that the Monge penalty  
allowed WaMGAN to inherit a near monotonicity property, in the sense that 
$\langle \widehat{T}_\U(y,z_1) - \widehat{T}_\U(y,z_2), z_1-z_2 \rangle \geq 0$ for over $99\%$ of the samples in the data, with $\lambda = 0.1$. However, 
there appears to be a tradeoff between monotonicity and quality of 
posterior statistics. We found that with $\lambda = 0.05$ we could obtain better 
posterior samples, mean, and variance but this resulted in a map that was 
monotone over a smaller fraction  of the training data.

\begingroup

\setlength{\tabcolsep}{4pt} 
\renewcommand{\arraystretch}{.8} 

\begin{figure}[htp]
\centering
\begin{tabular}{cccccc}
  \begin{overpic}[width=0.14\textwidth]{./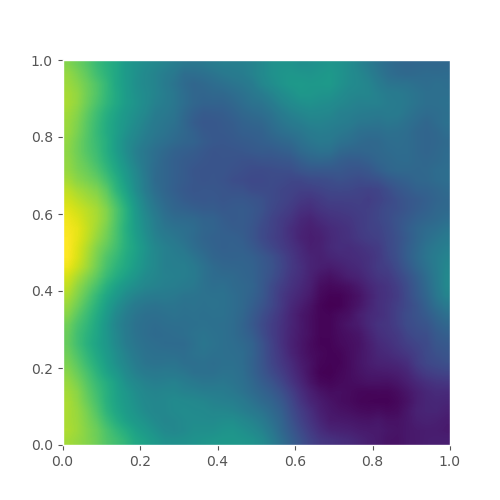}
  \end{overpic} &
  \begin{overpic}[width=0.14\textwidth]{./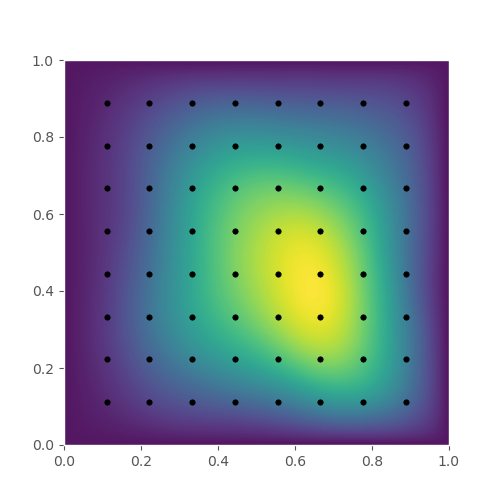} 
  \end{overpic} &
  \begin{overpic}[width=0.14\textwidth]{./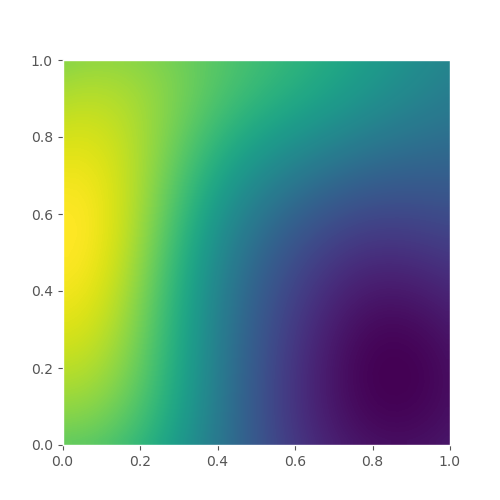}
  \end{overpic} &
\begin{overpic}[width=0.14\textwidth]{./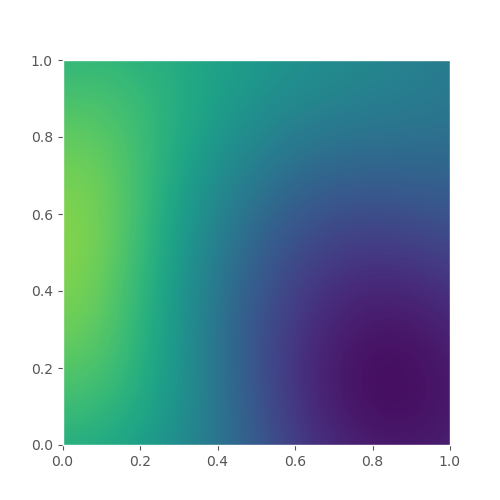}
  \end{overpic} &
  \begin{overpic}[width=0.14\textwidth]{./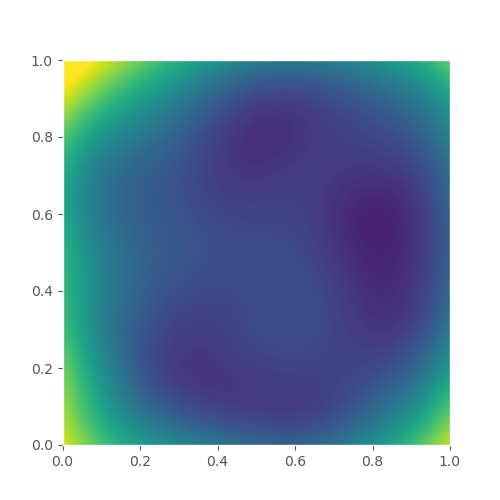}
    \end{overpic} &
  \begin{overpic}[width=0.14\textwidth]{./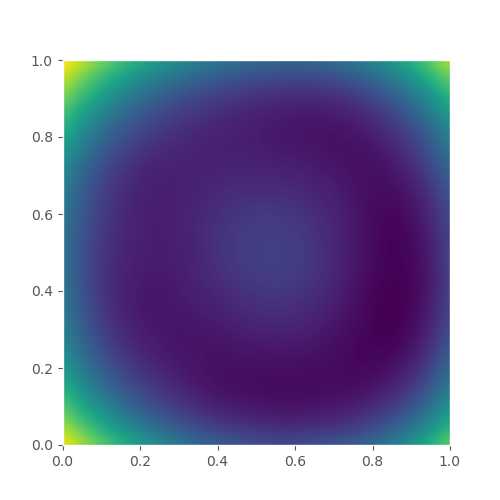}
  \end{overpic} \\ 
  \begin{overpic}[width=0.14\textwidth]{./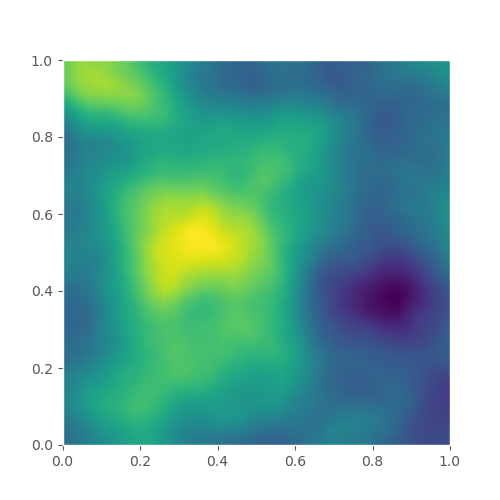}
  \end{overpic} &
  \begin{overpic}[width=0.14\textwidth]{./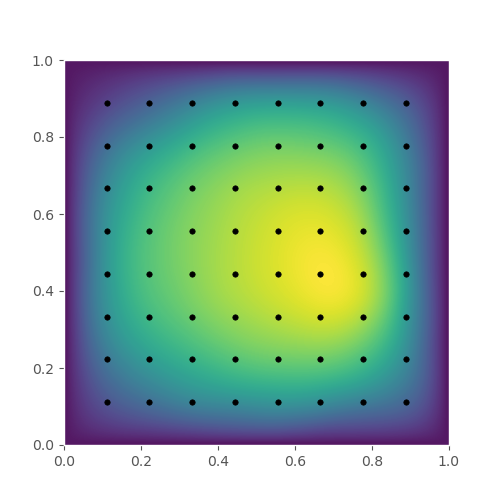} 
  \end{overpic} &
  \begin{overpic}[width=0.14\textwidth]{./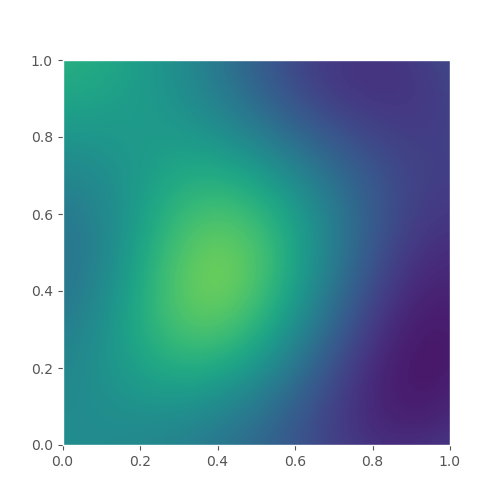}
  \end{overpic} &
    \begin{overpic}[width=0.14\textwidth]{./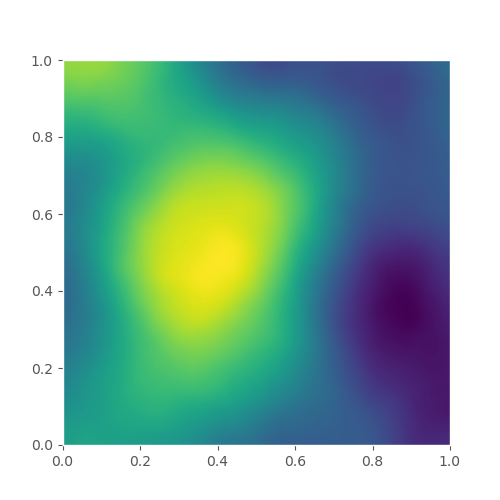}
  \end{overpic} &
  \begin{overpic}[width=0.14\textwidth]{./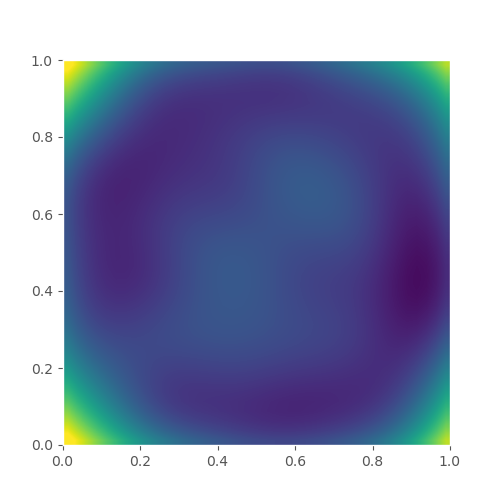}
    \end{overpic} &
  \begin{overpic}[width=0.14\textwidth]{./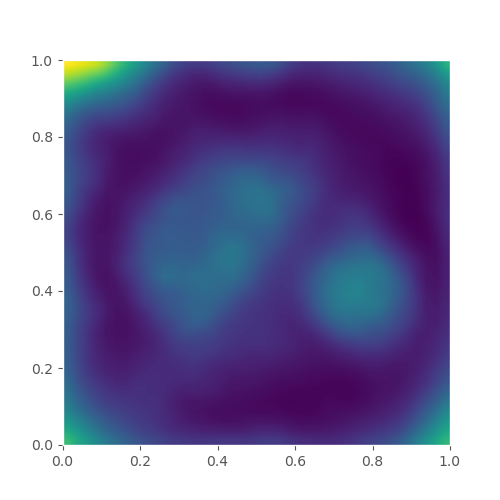}
  \end{overpic} \\ 
  \begin{overpic}[width=0.14\textwidth]{./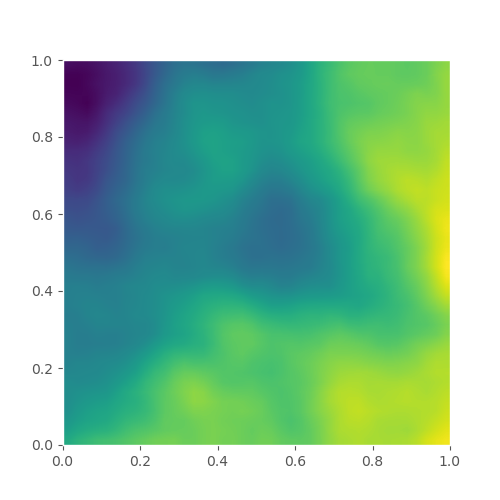}
  \end{overpic} &
  \begin{overpic}[width=0.14\textwidth]{./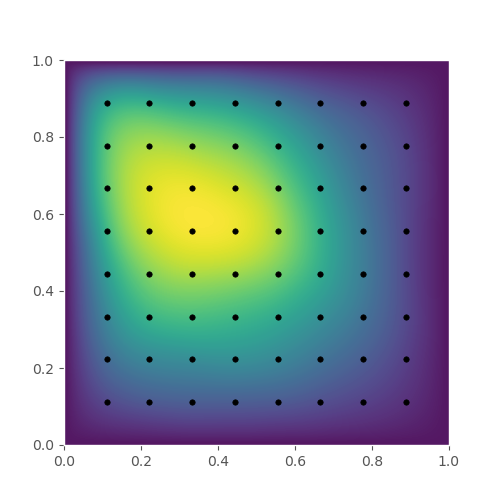} 
  \end{overpic} &
  \begin{overpic}[width=0.14\textwidth]{./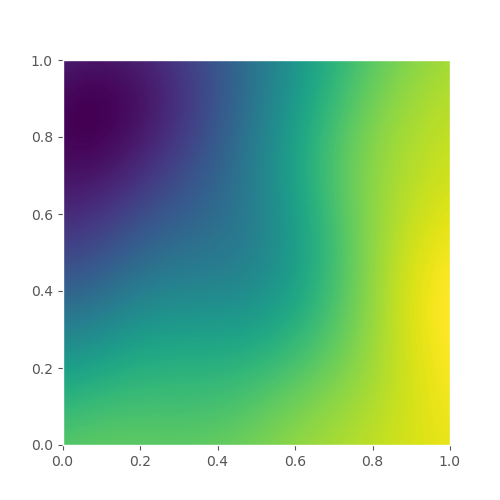}
  \end{overpic} &
  \begin{overpic}[width=0.14\textwidth]{./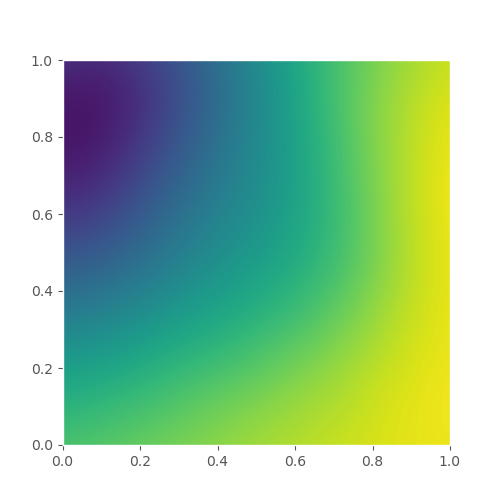}
  \end{overpic} &
  \begin{overpic}[width=0.14\textwidth]{./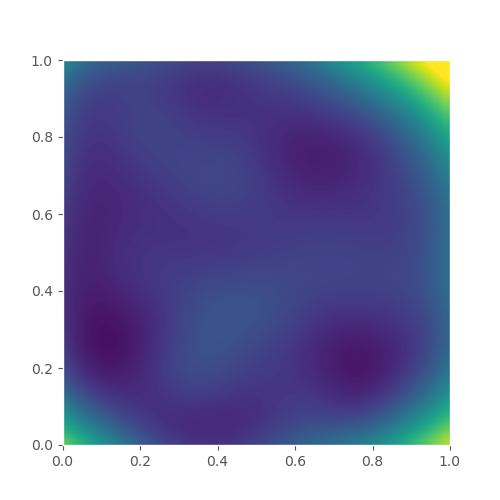}
    \end{overpic} &
  \begin{overpic}[width=0.14\textwidth]{./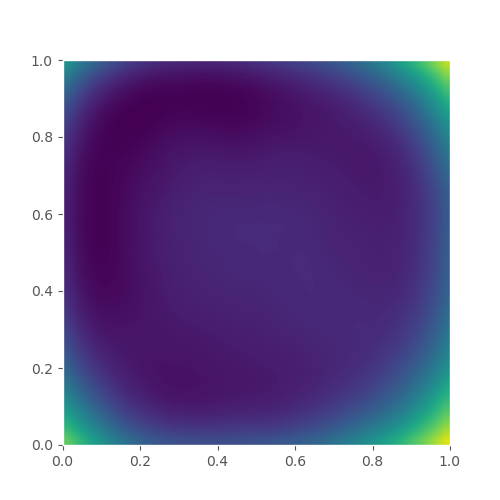}
  \end{overpic} \\ 
  \begin{overpic}[width=0.14\textwidth]{./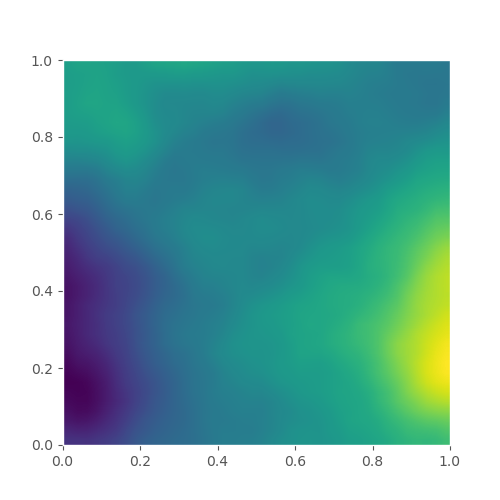}
  \put(48, -10){\footnotesize $u^\dagger$}
  \end{overpic} &
  \begin{overpic}[width=0.14\textwidth]{./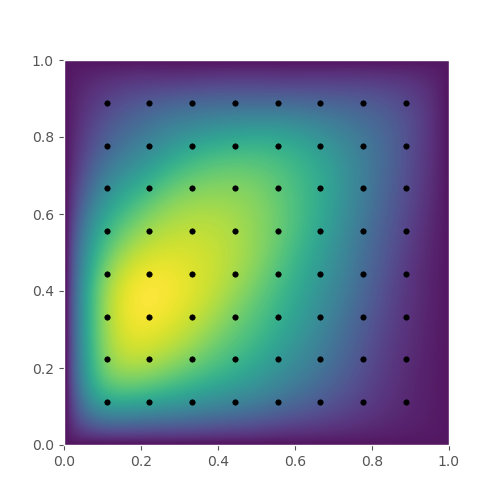} 
     \put(38, -10){\footnotesize $p(u^\dagger)$}
  \end{overpic} &
  \begin{overpic}[width=0.14\textwidth]{./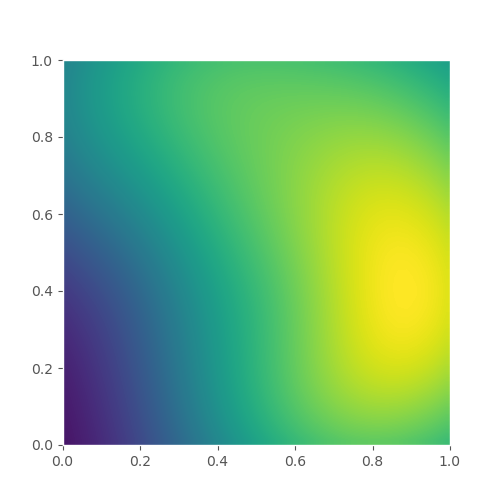}
     \put(30, -15){\scriptsize $\stackrel{\text{WaMGAN}}{\text{mean}}$}
  \end{overpic} &
  \begin{overpic}[width=0.14\textwidth]{./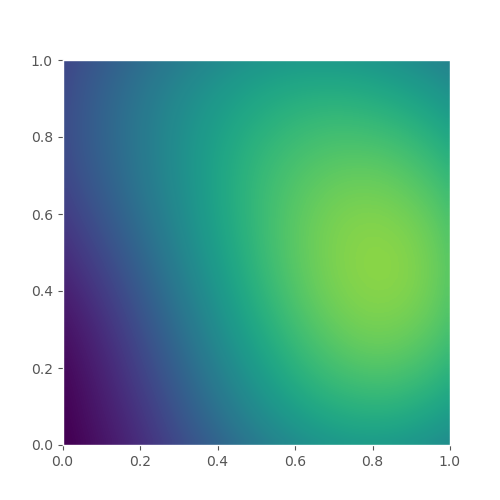}
  \put(35, -15){\scriptsize $\stackrel{\text{pCN}}{\text{mean}}$}
  \end{overpic} &
  \begin{overpic}[width=0.14\textwidth]{./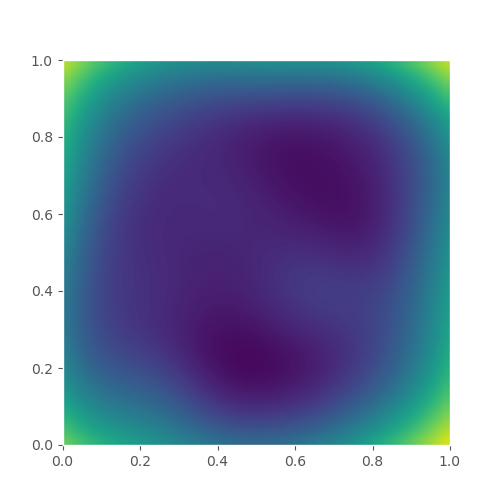}
  \put(30, -15){\scriptsize $\stackrel{\text{WaMGAN}}{\text{variance}}$}
    \end{overpic} &
  \begin{overpic}[width=0.14\textwidth]{./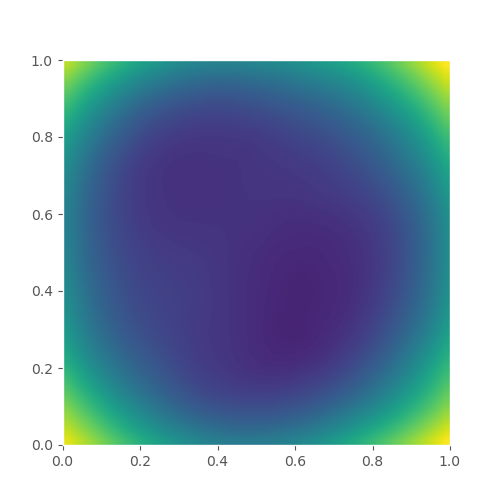}
  \put(30, -15){\scriptsize $\stackrel{\text{pCN}}{\text{variance}}$}
  \end{overpic}
 
\end{tabular}
\vspace{1ex}
\caption{Summary of our experiments for Darcy flow inverse 
problem comparing pCN and WaMGAN. From left, the first 
column shows the ground truth field $u^\dagger$, second column 
shows the corresponding $p(u^\dagger)$ and measurement 
locations $x_j$ denoted as black dots. Third and fourth columns 
show posterior means for WaMGAN and pCN respectively while 
the fifth and sixth columns show pointwise variances of the posteriors in the same order. In all cases we used the same color range for pCN 
and WaMGAN to ensure proper comparison between the two methods.}
\label{fig:darcy-summary-stats}
\end{figure}

\endgroup

\begin{figure}
    \centering
    \begin{overpic}[width=0.23\textwidth]{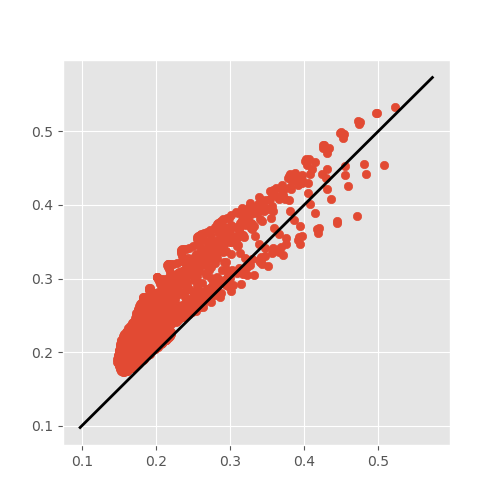}
    \put(20,-1.5){\footnotesize MCMC Variance}
    \put(-4,10){\footnotesize \rotatebox{90}{WaMGAN Variance}}
    \end{overpic}
    \begin{overpic}[width=0.23\textwidth]{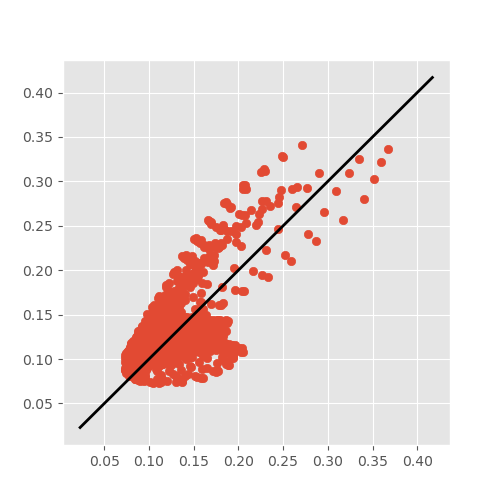}
    \put(20,-1.5){\footnotesize MCMC Variance}
    \end{overpic}
    \begin{overpic}[width=0.23\textwidth]{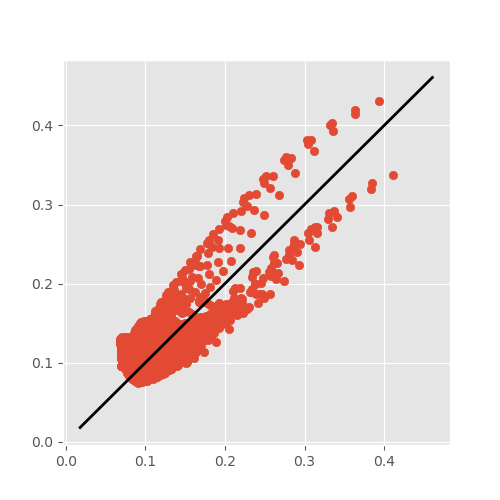}
    \put(20,-1.5){\footnotesize MCMC Variance}
    \end{overpic}
    \begin{overpic}[width=0.23\textwidth]{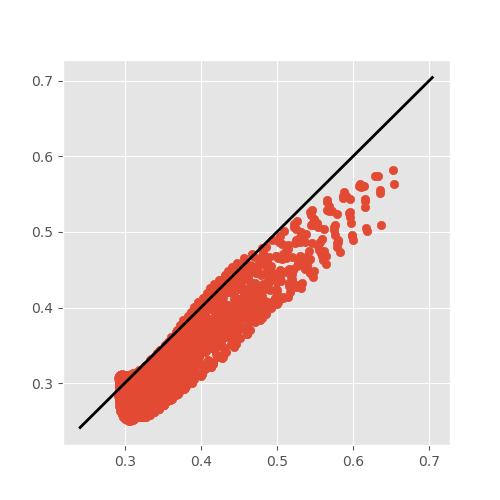}
    \put(20,-1.5){\footnotesize MCMC Variance}
    \end{overpic}
    \caption{Scatterplot comparison of pointwise posterior variances calculated 
    by WaMGAN and pCN for the four 
    examples in \Cref{fig:darcy-summary-stats}. The diagonal line represents perfect agreement between the two
    methods. }
    \label{fig:variance-scatter}
\end{figure}

\begingroup

\setlength{\tabcolsep}{4pt} 
\renewcommand{\arraystretch}{.8} 

\begin{figure}
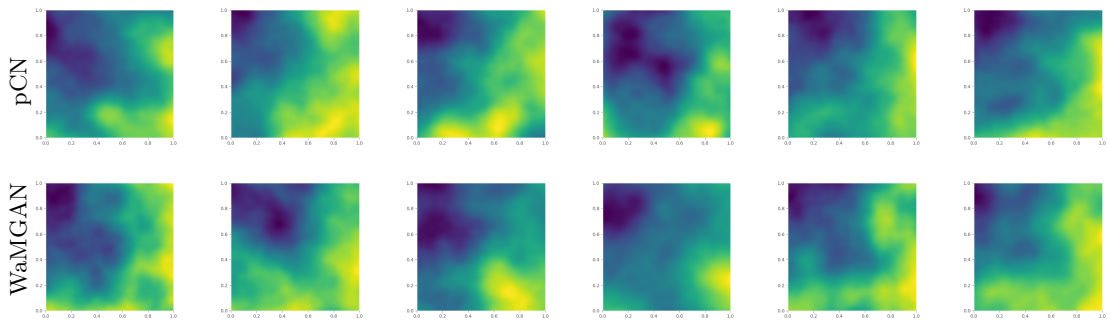

    \centering
    \begin{tabular}{cccccc}
    \begin{overpic}[width=0.14\textwidth]{./figures/darcy_figures_final/6/pcn_sample_0}
    \put(-9,30){\footnotesize \rotatebox{90}{pCN}}
    \end{overpic} & 
    \begin{overpic}[width=0.14\textwidth]{./figures/darcy_figures_final/6/pcn_sample_1}
    \end{overpic} & 
    \begin{overpic}[width=0.14\textwidth]{./figures/darcy_figures_final/6/pcn_sample_2}
    \end{overpic} & 
    \begin{overpic}[width=0.14\textwidth]{./figures/darcy_figures_final/6/pcn_sample_3}
    \end{overpic} & 
    \begin{overpic}[width=0.14\textwidth]{./figures/darcy_figures_final/6/pcn_sample_4}
    \end{overpic} & 
    \begin{overpic}[width=0.14\textwidth]{./figures/darcy_figures_final/6/pcn_sample_5}
    \end{overpic} \\ 
    \begin{overpic}[width=0.14\textwidth]{./figures/darcy_figures_final/6/wamgan_sample_0}
       \put(-9,20){\footnotesize \rotatebox{90}{WaMGAN}}
    \end{overpic} &
    \begin{overpic}[width=0.14\textwidth]{./figures/darcy_figures_final/6/wamgan_sample_1}
    \end{overpic} &
    \begin{overpic}[width=0.14\textwidth]{./figures/darcy_figures_final/6/wamgan_sample_2}
    \end{overpic} &
    \begin{overpic}[width=0.14\textwidth]{./figures/darcy_figures_final/6/wamgan_sample_3}
    \end{overpic} &
    \begin{overpic}[width=0.14\textwidth]{./figures/darcy_figures_final/6/wamgan_sample_4}
    \end{overpic} &
    \begin{overpic}[width=0.14\textwidth]{./figures/darcy_figures_final/6/wamgan_sample_5}
    \end{overpic} 
    \end{tabular}

    \caption{A comparison of posterior samples from pCN (top row) vs WaMGAN (bottom row)
    for one of our experiments. }
    \label{fig:darcy-samples}
\end{figure}

\endgroup




\section{Conclusions}\label{sec:conclusion}

We presented a systematic analysis of conditional OT problem underlying triangular 
transport problems towards conditional simulation and likelihood-free/amortized 
inference problems. We focused our analysis on infinite-dimensional spaces, and 
in particular separable Hilbert spaces, that are useful for PDE inverse problems. 
We introduced the conditional Kantorovich and Monge problems and studied their 
solvability and characterized their solutions as a limit of 
unconstrained OT problems with a singularly perturbed cost and further 
tailored our theory to the case BIPs. Finally, we presented two numerical 
experiments that showed how our analysis could 
inform the design of algorithms for amortized inference. 

While we tackled the foundational theory of conditional OT in this work, there 
are many open questions and directions that remain. For example, 
understanding the regularity of conditional OT maps is an interesting 
and important question in practice, and in particular 
the regularity of the $T_\U$ map with respect to the $y$ variable. 
We gave some preliminary results in this direction in \Cref{subsec:map-stability}, 
showing that the maps are Lipschitz in $y$, but our results 
are limited to the finite-dimensional setting and for approximation theory, we 
may need more regularity. Another related question of interest is the 
sample complexity of triangular maps and in particular the $T_\U$
component. The question of interest is {\it how close is $T_\U(y, \cdot) \# \RefM_\U$
to the target conditional $\TargetM(\cdot \mid y)$?} This is a nontrivial 
question because having $T\# \RefM$ close to $\TargetM$ (on the product space) 
does not imply closeness of the conditionals. Furthermore, our experiments in 
\Cref{subsec:2D-toy-example} indicate that conditioning in low-density areas of 
$\TargetM$ can lead to higher errors. Finally, a major practical question is 
that of the design of algorithms for amortized inference based on triangular 
maps that are efficient and robust. Our experiments indicate that 
solving OT problems as linear programs yields poor convergence rates in high dimensions. However, working directly with the Monge formulation 
allowed us to leverage the approximation power of neural nets and obtain 
good results in a Hilbert space setting. While we used a simplistic 
relaxation of the Monge problem for this task, the question remains open as 
to what is the "best" relaxation of the Monge problem that leads to good 
empirical performance and approximates the conditional OT map;
see \cite{MGAN, al2023optimal,  alfonso2023generative, hosseini-taghvaei-OTPF-2, wang2023efficient} for other works in this direction.

\section*{Acknowledgments}
BH and AH are supported by NSF grant DMS-208535, {\it "Machine Learning for Bayesian Inverse Problems"}. AH was also partially supported by the NSF Infrastructure grant DMS-2133244, {\it "Pacific Interdisciplinary Hub on Optimal Transport"} made to the PIMS Kantorovich Initiative.
AT is supported by NSF grant EPCN-2318977, {\it "Variational Optimal Transport Methods for Nonlinear Filtering"}.

\appendix

\bibliographystyle{siamplain}
\bibliography{ConditionalOT_References}

\end{document}